\newenvironment{prfstab}{{\bf Proof of Theorem \ref{stabletomor}.}}{\hfill $\Box$ \medskip}
\newenvironment{prfabe}{{\bf Proof of Theorem  \ref{abeliandefectcon}.}}{\hfill $\Box$ \medskip}
\newenvironment{prfoutgen}{{\bf Proof of  Proposition  
\ref{Outgen}.}}{\hfill $\Box$ \medskip}
\theoremstyle{plain}\newtheorem{Theorem}{Theorem}[section]
\theoremstyle{plain}
\theoremstyle{plain}
\theoremstyle{plain}\newtheorem{Lemma}[Theorem]{Lemma}
\theoremstyle{plain}\newtheorem{Proposition}[Theorem]{Proposition}
\theoremstyle{definition}
\theoremstyle{definition}
\theoremstyle{definition}
\theoremstyle{definition}
\theoremstyle{definition}\newtheorem{Remarks}[Theorem]{Remarks}
\theoremstyle{definition}\newtheorem{Hypothesis}[Theorem]{Hypothesis}
\def\Gb{{\bf G}}
\def\Zb{{\bf Z}}
\def\Ub{{\bf U}}
\def\Aut{\mathrm{Aut}}
\def\Aut{\mathrm{Aut}}
\def\dim{\mathrm{dim}}
\def\Ker{\mathrm{Ker}} 
\def\Inn{\mathrm{Inn}}
\def\Out{\mathrm{Out}}
\def\mod{\mathrm{mod}}
\def\Soc{\mathrm{Soc}}
\newcommand{\GL}{\operatorname{GL}}
\begin{document}

\title {On blocks stably equivalent to  
a quantum complete intersection of dimension $9$ in characteristic 
$3$ and a case of the Abelian defect group conjecture} 
\author{Radha Kessar} 
\date{}
\address {Institute of Mathematics, University of Aberdeen, 
Fraser Noble Building, King's College, Aberdeen AB24 3UE, U.K. }

\begin{abstract}
Using a stable equivalence due to Rouquier, we prove that 
Brou\'e's abelian defect group conjecture holds for  $3$-blocks of defect $2$ 
whose  Brauer correspondent has a 
unique isomorphism class of simple modules. The proof  makes use of   the fact, 
also due to Rouquier,   that  a stable equivalence of Morita type between 
self-injective algebras  induces an isomorphism  between the connected  
components of the  outer automorphism  groups of the algebras.
\end{abstract}

\maketitle

\section{Introduction}
The abelian defect group conjecture  of 
Brou\'e \cite[Question 6.2]{Brou2},
\cite[Conjecture, page 132]{konzim}  states that  any $p$-block ($p$ a 
prime number)  of a 
finite group  $G$ whose defect groups are abelian is derived equivalent   
to its Brauer correspondent. The conjecture is known to hold
 when the defect groups in question  
are   cyclic  \cite{rickstab},    Klein $4$-groups
\cite{ricksplen}, and when the concerned  
blocks are nilpotent \cite{puignilp}. The conjecture has also  been proved  
in several situations under additional global  assumptions, that is,  
assumptions on  $G$. In this paper, we  prove the following result.

\begin{Theorem}\label{abeliandefectcon}  
Let $k$ be an algebraically   closed field of characteristic  $3$. 
Let  $G$ be a finite group,   $A$ a   block of $kG$,
$P$ a defect group of   $A$ and $C$ the  Brauer correspondent  of $A$  
in $kN_G(P)$.  Suppose  that  
$P  \cong C_3 \times C_3$ and that   $C$  is a non-nilpotent  block  with   
a unique isomorphism class of 
simple modules.   Then  $A$ and $C$ are Morita equivalent as $k$-algebras.  
\end {Theorem}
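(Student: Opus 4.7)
My plan is to deduce Theorem \ref{abeliandefectcon} from the paper's main stable-to-Morita classification result, namely Theorem \ref{stabletomor}. The architecture has three steps: identify the Morita type of $C$ as a concrete $9$-dimensional quantum complete intersection; produce a stable equivalence of Morita type between $A$ and $C$; and apply the classification to lift the stable equivalence to a Morita equivalence.

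For the first step, since $P \cong C_3 \times C_3$ is abelian and normal in $N_G(P)$, Külshammer-style source algebra theory identifies $C$, up to Morita equivalence, with a twisted group algebra $k_\alpha (P \rtimes E)$, where $E$ is the inertial quotient acting faithfully on $P$ as a $3'$-subgroup of $\Aut(P) \cong \GL_2(3)$. The number of simple $C$-modules equals the number of simple $k_\alpha E$-modules, so the unique-simple hypothesis forces $k_\alpha E$ to be a matrix algebra, while non-nilpotency forces $E \ne 1$. A case check on the $3'$-subgroup lattice of $\GL_2(3)$ together with the relevant Schur multipliers shows that the only surviving configuration is $E \cong C_2 \times C_2$ acting diagonally by $\pm 1$ on $P$, with $\alpha$ the non-trivial class in $H^2(E, k^\times)$. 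The basic algebra of $C$ is then the $9$-dimensional quantum complete intersection $k\langle x, y\rangle / (x^3, y^3, xy + yx)$ advertised in the title.

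For the second step, I would invoke the Rouquier construction cited in the abstract. For a $p$-block with abelian defect group, one can glue local Morita equivalences at the Brauer pairs above $P$ (whose fusion is controlled by $N_G(P)$) into a two-sided bimodule that induces a stable equivalence of Morita type between $A$ and $C$; this step is independent of the finer structure of $C$. Composing with the Morita equivalence from Step 1, $A$ becomes stably equivalent of Morita type to the quantum complete intersection above.

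With this in hand I would finally apply Theorem \ref{stabletomor} to conclude that $A$ is Morita equivalent to $C$. The key leverage is Rouquier's theorem that a stable equivalence of Morita type between self-injective algebras induces an isomorphism between the connected components of their outer automorphism groups. The decisive and hardest step is this third one: one must rule out every basic self-injective $k$-algebra, other than the quantum complete intersection itself, that is stably equivalent of Morita type to it. Controlling the Hochschild $1$-cohomology via $\Out^0$, combined with the structure of the unique simple under the stable equivalence and the low-degree Hochschild cohomology of the quantum complete intersection, is exactly the rigidity tool that should close this gap, and pushing it through is the main technical content of Theorem \ref{stabletomor}.
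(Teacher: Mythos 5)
Your architecture matches the paper's proof in its broad strokes --- identify $C$ as Morita equivalent to the quantum complete intersection $B$ via K\"ulshammer's structure theory, produce the stable equivalence of Morita type from Rouquier's gluing construction, and feed the result into Theorem \ref{stabletomor}. But there is a genuine gap at the hand-off to Theorem \ref{stabletomor}, because you have not verified its hypotheses.

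Theorem \ref{stabletomor} requires the basic algebra of $A$ to be \emph{local} (so $A$ has a unique simple module), \emph{nine-dimensional}, and to have \emph{center isomorphic to $Z(B)$ as a $k$-algebra}. None of these comes for free from the stable equivalence of Morita type: such an equivalence does not preserve the center as a $k$-algebra (only the stable center), and whether it preserves the number of simple modules is precisely the Auslander--Reiten conjecture for blocks, which is not known in general. You implicitly assume a "unique simple under the stable equivalence," but that needs an independent argument. The paper closes this gap by invoking Puig--Usami \cite{PuUs1}: $A$ and $C$ are \emph{perfectly isometric}, which by Brou\'e's theory yields an isomorphism $Z(A)\cong Z(C)$ of $k$-algebras and the equality of decomposition/Cartan data, whence $A$ has one simple module and its basic algebra has dimension $9$. (Alternatively Kiyota's result \cite{Kiy}, mentioned in the paper's remarks, gives the count of simple modules.) Without one of these inputs you cannot legitimately apply Theorem \ref{stabletomor}, so your proposal as written does not yet go through.

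A minor, self-correcting point: in your case check for $E$, the forced conclusion is $E\cong C_2\times C_2$ with a nontrivial $2$-cocycle, which agrees with what the paper cites from \cite{HoKe}; your reasoning there is fine.
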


The  above theorem     is the first of its kind in that  it proves a case 
of  the abelian defect group conjecture  for all blocks   with 
a prescribed local structure which is of  wild representation type and 
which  is not obviously related  to  the nilpotent case.
We point out that we do not know if the equivalence 
constructed for the  proof of  Theorem \ref {abeliandefectcon}  is splendid or 
if it can be lifted to a complete local ring in characteristic zero.

The proof  of Theorem \ref{abeliandefectcon}   fits in with the 
inductive approach   to the abelian defect group conjecture outlined by 
Rouquier in \cite{rouquier-glue} :  the starting point is 
the  gluing result of  \cite{rouquier-glue}, which yields a
stable  equivalence of Morita type between the blocks  $A$ and $C$. 
Moreover,  given the assumptions on $C$ and  K\"ulshammer's 
structure results on blocks with normal defect groups  in  \cite{Ku}, 
it is known that  $C$ is Morita equivalent to  the quantum complete 
intersection  algebra $k \langle X, Y\rangle / \langle X^3, Y^3, XY + YX 
\rangle $.   Finally,  by  \cite{PuUs1}, it is known that $A$  and $C$ 
have isomorphic centers   and that $A$ also has a unique isomorphism 
class of simple modules. We obtain  
Theorem  \ref{abeliandefectcon} as a consequence  of the following   result.

\begin{Theorem} \label{stabletomor}  Let $k$ be an algebraically closed field 
of characteristic $3$.
Let $$B= k \langle X, Y\rangle /\langle X^3, Y^3, XY + YX \rangle $$  and  let 
$A$  be a
local, symmetric $k$-algebra. 
Suppose that  $\dim_k(A)= 9$, $Z(A) \cong Z(B)$ and that 
there is a stable equivalence of Morita type between $A$ and $B$. 
Then $A$ is isomorphic to $B$.
\end {Theorem}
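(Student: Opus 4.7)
The plan is to combine the hypothesis $Z(A) \cong Z(B)$ with the key input emphasised in the abstract: Rouquier's theorem that a stable equivalence of Morita type between self-injective $k$-algebras induces an isomorphism $\Out^0(A) \cong \Out^0(B)$ between the connected components of the outer automorphism groups, viewed as algebraic groups. The idea is first to use the other hypotheses (symmetry, dimension, and center) to reduce $A$ to a small family of deformations of $B$, and then to use this Rouquier isomorphism to force the deformation parameters to vanish.

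First I would record the invariants of $B$. The algebra $B$ is graded, with Loewy layer dimensions $(1,2,3,2,1)$, socle $kX^2Y^2$, and center $Z(B)$ of dimension $4$ spanned by $\{1, X^2, Y^2, X^2Y^2\}$. The diagonal torus $(X,Y) \mapsto (\alpha X, \delta Y)$ gives a two-dimensional torus in $\Aut(B)$, whose image in $\Out(B)$ still has positive dimension since conjugation by $1 + J(B)$ is unipotent. With a little more work one writes down $\Out^0(B)$ explicitly as an algebraic group of dimension at least $2$.

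Next I would pin down the structure of $A$. A stable equivalence of Morita type between self-injective algebras with a single simple module each induces isomorphisms $\Ext^i_A(k, k) \cong \Ext^i_B(k, k)$ for $i \ge 1$ (possibly after adjusting by a Heller shift), so in particular $\dim J(A)/J(A)^2 = 2$. Combining this with symmetry, $\dim A = 9$, and $\dim Z(A) = 4$, I would argue that the Loewy layers of $A$ are forced to be $(1,2,3,2,1)$. Hence $A$ is a quotient of $k\langle x, y\rangle$; its quadratic relation has the form $\alpha x^2 + \beta xy + \gamma yx + \delta y^2 \in J^3(A)$, which after change of generators and using symmetry of the Frobenius form plus $\dim Z(A) = 4$ (ruling out the commutative case) I would normalise to $xy + yx \in J^3(A)$. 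Similarly the cubic relations are normalised to $x^3, y^3 \in J^4(A)$. This presents $A$ as a deformation of $B$ by terms in higher radical powers, depending on finitely many scalar parameters.

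Finally I would apply Rouquier's theorem to the family of such deformations. For a non-trivial choice of the parameters the $(k^*)^2$-symmetry of $B$ is broken and $\Out^0(A)$ becomes strictly smaller than $\Out^0(B)$. Imposing the isomorphism $\Out^0(A) \cong \Out^0(B)$ then forces all parameters to vanish, so that $A \cong B$. I expect the main obstacle to be precisely this last step: computing $\Out^0$ explicitly for each candidate deformation and identifying the locus in parameter space where it is isomorphic to $\Out^0(B)$.
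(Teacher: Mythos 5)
Your high-level strategy --- normalise $A$ to a parametric deformation of $B$ using the symmetry and center hypotheses, then invoke Rouquier's theorem that a stable equivalence of Morita type between self-injective algebras preserves $\Out^0$ as an algebraic group, and argue that nonzero parameters break the two-torus symmetry --- is indeed the paper's strategy, stated in the abstract. But the execution sketch has several concrete errors and gaps.

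First, $\dim_k Z(B) = 6$, not $4$: a basis is $\{1, X^2, Y^2, XY^2, YX^2, X^2Y^2\}$ (Lemma~\ref{centerA}). For instance $XY^2$ is central: $X\cdot XY^2 = X^2Y^2 = XY^2\cdot X$ and $Y\cdot XY^2 = -XY^3 = 0 = XY^2\cdot Y$. Equivalently $\dim[B,B]=3$ (spanned by $XY$, $X^2Y$, $XY^2$), so $\dim Z(B) = 9-3 = 6$. The hypothesis $Z(A)\cong Z(B)$ carries considerably more information than your sketch uses, and your downstream numerics (Loewy layers, parameter count) inherit the error.

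Second, your claim that the stable equivalence immediately forces $\dim J(A)/J(A)^2 = 2$ via $\Ext^1(k,k)$ does not go through: a stable equivalence of Morita type between local self-injective algebras need not send the simple module to the simple module, only to some indecomposable non-projective module $M$, so one gets $\underline{\Ext}^*_A(k_A,k_A)\cong\underline{\Ext}^*_B(M,M)$, which need not coincide with $\underline{\Ext}^*_B(k_B,k_B)$. The paper makes no such reduction. Instead, Proposition~\ref{firstreduction} shows the Loewy layer dimensions are either $(1,3,2,2,1)$ or $(1,2,3,2,1)$, and the entire case $\dim J_1/J_2 = 3$ (Hypothesis~\ref{Ahyp3}, Section 4) has to be treated and ruled out at the very end via the $\Out^0$ computation (Proposition~\ref{out3}, Proposition~\ref{3unipotent}), not dismissed beforehand by an Ext argument.

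Third, even in the case $\dim J_1/J_2 = 2$, the quadratic relation cannot always be normalised to $xy+yx\in J^3$. Lemma~\ref{2start} shows one can always arrange either $xy+yx\in J_3$ \emph{or} $y^2\in J_3$, and the second possibility (Hypothesis~\ref{Ahyp2bad}) requires its own analysis (Lemmas~\ref{2bad}, \ref{2getboring}, Proposition~\ref{Out2bad}). Your sketch silently drops this branch.

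Fourth, your last step is where the real work lives, and you underestimate it slightly: comparing $\Out^0(A)/R_u(\Out^0(A))$ is not always enough. Within the generic case there are parameter choices where this quotient is still a two-dimensional torus, and the paper also needs to compare $\dim Z\bigl(R_u(\Out^0(\cdot))\bigr)$ (Propositions~\ref{3unipotent} and~\ref{2unipotent}) to separate $B$ from all competitors. So the invariant you actually need to extract from Rouquier's theorem is finer than a single torus-rank comparison.
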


The  proof of Theorem  \ref{stabletomor}   has two main steps. First, 
we use arguments in the style 
of  \cite{kulshammer:locsym}  to describe   $9$-dimensional 
local symmetric algebras whose  center  is isomorphic to  that of
$B$ - there are infinitely many isomorphism classes of these. 
In the second step, we show that  amongst the  algebras obtained in 
step 1, the only ones which are stably equivalent  (\'a  la Morita) to $B$ are 
isomorphic to $B$.  The crucial ingredient   here 
is the fact that  a stable equivalence of Morita type  
between self-injective algebras  preserves the connected component of   the 
outer automorphism group of the algebras (\cite{rouq:stab}).

\begin{Remarks} (i) By work of Kiyota in \cite{Kiy}, 
with the notation of Theorem \ref{abeliandefectcon}, it is known that $A$ 
has a unique  isomorphism class of simple module if and only if  $C$ 
has a unique  isomorphism class of simple modules. Thus Theorem  
\ref{abeliandefectcon}  remains true if one replaces the hypothesis that 
$C$ has a unique  isomorphism class of simple modules  by the hyothesis that 
either $A$ or $C$  has a  unique  isomorphism class of simple modules.

(ii) The proof of Theorem \ref{stabletomor}  is very computational; however  
it seems possible that  the methods   will find application in other 
situations, 
for instance, for      $p$-blocks  of defect $2$ 
whose Brauer correspondent  has one isomorphism    class of simple modules 
for   other  primes  $p$ (see \cite{HoKe}, \cite{Keslin09}).

\end{Remarks}

\section{Background Results on algebras and their automorphism groups}

Let $k$ be an algebraically closed field. All $k$-algebras  will be assumed 
to be finite dimensional  vector spaces over  $k$. Recall that   a  $k$-algebra 
$C$ is  symmetric if   there exists a $k$-linear function $s : C \to k$ 
such that  $s(xy)=s(yx) $ for all $x, y\in C$ and   no  non-zero left ideal of 
$C$ is contained in the kernel of $s$.    Also, recall that  
since  $k$ is algebraically  closed,  $C$ is local  if and only if 
$\dim_k(C/J(C))=1 $.

The following  gathers some well  known properties of local symmetric algebras;
we refer to \cite{kulshammer:habil}  for proofs.

\begin{Lemma}\label{locsymgen}  Suppose that $C$ is   a local symmetric 
$k$-algebra.  Then  

(i)  $ \dim_k \Soc(C)  =1 $. 

(ii) $\Soc(C)  \subseteq  \Soc(Z(C))$.   

(iii)$ \Soc (C) \cap [C, C] = 0$.  

(iv) $\dim_k(C)=  \dim_k(Z(C)) +\dim_k[C,C] $.

(v) $Z(C)$ is local and  $Z(C) \cap J (C) =J(Z(C))$. 

(vi)   If $n$ is the least natural number  such that $J^{n+1}(C) =0 $, then 
$\Soc(C)  = J^n(C)$.
\end{Lemma}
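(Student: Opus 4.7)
The plan is to leverage the $(C,C)$-bimodule isomorphism $C \cong C^*$ provided by the symmetrizing form: $c \mapsto \langle c,-\rangle$, where $\langle a,b\rangle := s(ab)$. The symmetry $s(xy) = s(yx)$ makes the pairing symmetric, and the defining non-degeneracy condition (no non-zero one-sided ideal inside $\ker s$) makes the pairing non-degenerate; symmetry further ensures that the left- and right-ideal versions of non-degeneracy are equivalent, since $Cc \subseteq \ker s \iff s(Cc) = 0 \iff s(cC) = 0 \iff cC \subseteq \ker s$. This bimodule isomorphism drives parts (i)--(iv); part (v) is purely ring-theoretic.

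I would prove (i) and (vi) first. The bimodule isomorphism identifies the left socle of $C$ with that of $C^*$, which is naturally $(C/J(C))^*$; since $C$ is local, $\dim_k(C/J(C)) = 1$, whence $\dim_k \Soc(C) = 1$, giving (i). For (vi), $J^n(C) \ne 0$ by minimality of $n$ while $J(C) \cdot J^n(C) = J^{n+1}(C) = 0$, so $J^n(C)$ is a non-zero submodule of the one-dimensional ideal $\Soc(C)$, forcing equality. Next, (v) is handled ring-theoretically: a central unit of $C$ has a central inverse (a direct check from $zc = cz = 1$ shows the inverse commutes with every $c' \in C$), so an element of $Z(C)$ is a unit in $Z(C)$ if and only if it is a unit in $C$. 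The non-units of $Z(C)$ therefore coincide with the ideal $Z(C) \cap J(C)$, which is consequently the unique maximal ideal of $Z(C)$, yielding both locality and the identity $J(Z(C)) = Z(C) \cap J(C)$.

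Parts (ii), (iii), and (iv) then exploit the form directly. For (ii), $\Soc(C)$ is a one-dimensional two-sided ideal on which $J(C)$ acts as zero from both sides, so the left and right $C/J(C) \cong k$ actions on it coincide (both are scalar multiplication by the unique $k$-algebra character), forcing $\Soc(C) \subseteq Z(C)$; the inclusion $J(Z(C)) \subseteq J(C)$ from (v) then gives $\Soc(C) \subseteq \Soc(Z(C))$. For (iii), $s(xy) = s(yx)$ forces $[C,C] \subseteq \ker s$, while non-degeneracy prevents the non-zero left ideal $\Soc(C)$ from lying inside $\ker s$; one-dimensionality of $\Soc(C)$ then yields $\Soc(C) \cap [C,C] = 0$. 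For (iv), I would show $Z(C) = [C,C]^\perp$ with respect to $\langle -,-\rangle$: the inclusion $\subseteq$ follows from cyclicity of $s$ combined with $zy = yz$ (both $s(zxy)$ and $s(zyx)$ collapse to $s(xyz)$), while the reverse inclusion extracts $ax = xa$ from the identity $s((ax-xa)y) = 0$ for all $y$ by applying non-degeneracy to the right ideal $(ax-xa)C \subseteq \ker s$. The dimension count $\dim_k [C,C]^\perp = \dim_k C - \dim_k [C,C]$ then finishes (iv). I expect no serious obstacle; the subtlest point is the symmetric use of non-degeneracy in (iv), which relies on the equivalence of the left- and right-ideal versions of the defining condition on $\ker s$ noted in the first paragraph.
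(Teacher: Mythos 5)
The paper offers no proof of this lemma (it cites K\"ulshammer's Habilitationsschrift), so there is nothing to compare directly; your argument is the standard one and is correct. Each step checks out: the bimodule isomorphism $C\cong C^{*}$ gives (i); the inclusion $J^{n}\subseteq\Soc(C)$ plus (i) gives (vi); the central-inverse observation gives (v); factoring both the left and right $C$-actions on the one-dimensional ideal $\Soc(C)$ through the unique algebra character $C\to C/J(C)\cong k$ gives $\Soc(C)\subseteq Z(C)$ and then (ii) via $J(Z(C))\subseteq J(C)$; and (iii), (iv) follow from $[C,C]\subseteq\ker s$, the equivalence of the left- and right-ideal non-degeneracy conditions you correctly note, and the identification $Z(C)=[C,C]^{\perp}$ together with the rank formula for a non-degenerate bilinear form.
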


We will also use  the following results from \cite{kulshammer:locsym}.

\begin{Lemma}\label{kulsconv}\cite[Lemma E]{kulshammer:locsym}  
Let $C$ be a  $k$-algebra and let 
$I$ be a  two-sided ideal of   $C$. Let $m, n  $ be natural numbers  with 
$m\leq n $.
Suppose that
$$I^n = k-\text{span of \,} 
\{ x_{i1}\cdots x_{in} \, : \, i=1, \cdots, d \}  + I^{n+1} $$ 
with elements  $x_{ij} \in I$.  Then,
$$I^{n+m} = k-\text{span of \, } 
\{ x_{j1}\cdots x_{jm}x_{i1}\cdots x_{in} \, : \, i,j=1, \cdots, d \}  
+ I^{n+m+1}. $$ 
\end{Lemma}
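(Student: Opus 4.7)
Set $w_i := x_{i1}\cdots x_{in}$, $w_j^{(m)} := x_{j1}\cdots x_{jm}$, and $V := k\text{-span}\{w_1,\ldots,w_d\}$, so the hypothesis reads $I^n = V + I^{n+1}$. The first observation is that
$$I^{n+m} = I^m\cdot I^n = I^m(V + I^{n+1}) = I^m V + I^{n+m+1}.$$
Since $I^m$ is spanned by products $b_1\cdots b_m$ with $b_k\in I$, the statement reduces to the following: for every such $b = b_1\cdots b_m$ and every index $i$, the element $b\cdot w_i$ lies in the $k$-span of $\{w_j^{(m)} w_\ell : j, \ell \in\{1,\dots,d\}\}$ modulo $I^{n+m+1}$.

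The central idea is to apply the hypothesis twice, using the inequality $m\le n$ to keep both intermediate products of length exactly $n$. First I would split $w_i = (x_{i1}\cdots x_{i(n-m)})\cdot u_i$ with $u_i := x_{i(n-m+1)}\cdots x_{in}\in I^m$. Then $b\cdot x_{i1}\cdots x_{i(n-m)}$ is a product of $m+(n-m) = n$ elements of $I$, so the hypothesis yields scalars $\alpha_j$ with
$$b\cdot x_{i1}\cdots x_{i(n-m)} \equiv \sum_j \alpha_j\, w_j \pmod{I^{n+1}}.$$
Right-multiplying by $u_i\in I^m$ and using $I^{n+1}\cdot I^m\subseteq I^{n+m+1}$ then yields
$b\cdot w_i \equiv \sum_j \alpha_j\, w_j u_i \pmod{I^{n+m+1}}$.

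For the second application, for each $j$ I would split $w_j = w_j^{(m)}\cdot(x_{j(m+1)}\cdots x_{jn})$ and note that $x_{j(m+1)}\cdots x_{jn}\cdot u_i$ is again a product of $(n-m)+m = n$ elements of $I$. The hypothesis produces scalars $\beta^{j,i}_\ell$ with
$$x_{j(m+1)}\cdots x_{jn}\cdot u_i \equiv \sum_\ell \beta^{j,i}_\ell\, w_\ell \pmod{I^{n+1}}.$$
Left-multiplying by $w_j^{(m)}\in I^m$ gives $w_j u_i \equiv \sum_\ell \beta^{j,i}_\ell\, w_j^{(m)} w_\ell \pmod{I^{n+m+1}}$, and substituting into the earlier congruence expresses $b\cdot w_i$ as a combination of the desired generators $w_j^{(m)} w_\ell$ modulo $I^{n+m+1}$.

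I do not anticipate any real obstacle beyond spotting this two-step unfolding: once on the right to absorb the prefix $b$ into combinations of $w_j$, and once on the left to peel off the prefix $w_j^{(m)}$ and re-express the resulting tail as a combination of $w_\ell$. The condition $m\le n$ enters in exactly one way, namely to guarantee that both intermediate products (of lengths $m+(n-m)$ and $(n-m)+m$) have length precisely $n$, so that the hypothesis is applicable. The rest is index bookkeeping.
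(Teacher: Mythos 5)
Your proof is correct. Since the paper cites this as \cite[Lemma E]{kulshammer:locsym} without reproducing the argument, there is no in-paper proof to compare against; but the two-step re-expansion you describe — peel the prefix of length $n-m$ off $w_i$ to absorb $b$ into a combination of the $w_j$ modulo $I^{n+1}$, then peel the suffix of length $n-m$ off each $w_j$ to re-express the tail against $u_i$ — is the standard argument, and it is exactly where the hypothesis $m\le n$ enters (both intermediate products must have length precisely $n$ so that the spanning hypothesis applies). The bookkeeping with the error terms $r u_i \in I^{n+1}I^m \subseteq I^{n+m+1}$ and $w_j^{(m)} r' \in I^m I^{n+1} \subseteq I^{n+m+1}$ is also handled correctly.
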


\begin{Lemma}\label{kulsconv2}\cite[Lemma G]{kulshammer:locsym}   Let 
$C $ be a local symmetric $k$-algebra. If $n$ is  a natural number such that 
$\dim_k J^n(C)/J^{n+1}(C) = 1 $, then $J^{n-1}(C) \subseteq Z(C)$.
\end{Lemma}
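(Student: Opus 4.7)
My plan is to argue by downward induction on $n$, using Lemma \ref{kulsconv} to propagate the hypothesis $\dim_k J^n(C)/J^{n+1}(C) = 1$ to every higher Loewy layer, and then using the local symmetric structure from Lemma \ref{locsymgen} at the top of the radical filtration as a base case before pushing downward.

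First I would set $n_0$ to be the largest integer with $J^{n_0}(C) \neq 0$; then $\Soc(C) = J^{n_0}(C)$ by Lemma \ref{locsymgen}(vi), and it is $1$-dimensional by (i). Since $\dim_k J^n(C)/J^{n+1}(C) = 1$, a generator of this quotient can be represented (modulo $J^{n+1}(C)$) as a single product $x_1 x_2 \cdots x_n$ of elements of $J(C)$. Applying Lemma \ref{kulsconv} with $d = 1$ and varying $m$ gives $\dim_k J^{n+k}(C)/J^{n+k+1}(C) \leq 1$ for every $k \geq 0$; combining with $J^{n_0}(C) \neq 0$ and the strictness of the radical filtration in a local algebra, I obtain the stronger statement $\dim_k J^m(C)/J^{m+1}(C) = 1$ for every $n \leq m \leq n_0$.

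The base case $n = n_0$ is then immediate: for $y \in J^{n_0 - 1}(C)$ and $a \in J(C)$, the commutator $[y,a]$ lies in $J^{n_0}(C) \cap [C,C] = \Soc(C) \cap [C,C] = 0$ by Lemma \ref{locsymgen}(iii). In the inductive step I would assume the conclusion for each $n' \in \{n+1, \ldots, n_0\}$; applying the lemma at every such $n'$ yields $J^m(C) \subseteq Z(C)$ for all $n \leq m \leq n_0 - 1$. Given $y \in J^{n-1}(C)$ and $a \in J(C)$, this places $[y,a]$ in $J^n(C) \cap [C,C]$ with $J^n(C) \subseteq Z(C)$. To force $[y,a] = 0$ I would invoke the symmetrizing form $s$: for any $c \in C$, cyclicity of $s$ gives $s([y,a]c) = s(y[a,c])$, and the identity $y[a,c] = [ya,c] - [y,c]a$ together with the centrality of $ya$ and $[y,c]$ (both in $J^n(C) \subseteq Z(C)$) collapses the right-hand side into $J^{n+1}(C)$; coupling this reduction with the Frobenius duality $(J^m(C))^\perp = J^{n_0 - m + 1}(C)$ for the non-degenerate pairing $(u,v) \mapsto s(uv)$ would push $[y,a]$ successively deeper into the radical filtration, ultimately landing it in $\Soc(C) \cap [C,C] = 0$.

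The main obstacle will be precisely this inductive step: the weak conclusion $[y,a] \in Z(C) \cap [C,C]$ does not by itself force $[y,a] = 0$, because $Z(C) \cap [C,C]$ is generally non-zero in a local symmetric algebra (it is non-zero, for example, for the algebra $B$ of Theorem \ref{stabletomor} itself). The resolution hinges on converting the $1$-dimensionality of every Loewy layer above level $n$, together with the symmetric form, into an iterative refinement that exhausts the positive powers of $J(C)$ and deposits $[y,a]$ in the socle, where the orthogonality with $[C,C]$ finishes the argument.
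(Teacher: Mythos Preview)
The paper does not prove this lemma; it is quoted from K\"ulshammer without argument, so there is no proof here to compare against. Assessing your proposal on its own merits: the propagation of the $1$-dimensionality via Lemma~\ref{kulsconv}, the base case via $\Soc(C)\cap[C,C]=0$, and the identity $y[a,c]=[ya,c]-[y,c]a=-[y,c]a$ (using $ya\in J^n\subseteq Z$) are all correct and are indeed the right ingredients.

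The gap is the invocation of ``Frobenius duality $(J^m)^{\perp}=J^{n_0-m+1}$''. For a local symmetric algebra one has $(J^m)^{\perp}=\Soc_m(C)$, the $m$-th socle, and $\Soc_m(C)=J^{n_0-m+1}$ holds only when the Loewy and socle series coincide; this fails already for the algebras of Hypothesis~\ref{Ahyp3} in this very paper, whose Loewy sequence $(1,3,2,2,1)$ is not palindromic. Without that identification your iteration scheme does not move $[y,a]$ down the radical filtration. What does work is to use the pointwise identity you have essentially derived, $[y,a]c=-[y,c]a$ for $a,c\in J$, rather than its $s$-image. Applying it with $c=c_1c_2\in J^2$ and the Leibniz rule gives $[y,c_1c_2]=[y,c_1]c_2+[y,c_2]c_1=0$, so $[y,a]J^2=0$, i.e.\ $[y,a]\in\Soc_2(C)\cap J^n$. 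Since every layer $J^m/J^{m+1}$ for $m\ge n$ is one-dimensional, $J^n$ is uniserial and this intersection equals $J^{n_0-1}$. Writing $[y,a]\equiv\lambda(a)w_1\pmod{J^{n_0}}$ and $w_1b=\nu(b)w_0$ with $w_0$ spanning $\Soc(C)$, the relation $[y,a]b+[y,b]a=0$ becomes $\lambda(a)\nu(b)+\lambda(b)\nu(a)=0$; as $\nu\ne 0$ (else $w_1\in\Soc(C)$), this forces $\lambda\equiv 0$, whence $[y,a]\in\Soc(C)\cap[C,C]=0$.
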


For a finite dimensional $k$-algebra $C$, let  $\Aut(C)$  
be the group of automorphisms of $C$, viewed as  a (linear)  algebraic group.  
Clearly,   $\Aut(C)$ is a subgroup of  the algebraic group 
$\GL(C)$ of  the automorphisms of the $k$-vector space $C$.
For any    $\Aut(C)$-invariant  subspaces 
$ U, V$ of  $C$ with $ V \subseteq  U$, let   $ f_{U,V} \,  : \,  \Aut (C) 
\to    GL(U/V )$    denote  the   map  defined by    
$f_{U,V} (\varphi)  (u +V) =\varphi(u) + V$, $u \in U$.
Any power $J^i(C)$ of the radical      of $C$ is $\Aut(C)$-stable, hence
$\Aut(C)$ is  contained in  the parabolic subgroup  of  $\GL(C)$ of elements 
which stabilize  the flag $ C \supset  J(C) \supset J^2 (C)  \supset \cdots $.
For each $i \geq 0$, let  $f_i $ denote the map $f_{J^i(C), J^{i+1}(C)}$.

Let $\Inn(C) $ denote the  closed  connected normal subgroup of $\Aut(C)$  
consisting of inner automorphisms of $C$ and let   $\Out(C) $ 
be the quotient group    $\Aut(C)/\Inn(C)$.  Let   $\Aut^0(C)$ denote the  
connected component of the identity of $\Aut(C)$ and let 
$ \Out^0(C)= \Aut^0(C)/\Inn(C) $ denote the connected component of 
$\Out^0(C)$.

The following easy  results will come in handy for  calculating  
automorphism groups.
Recall that  for a connected  linear algebraic group  $\Gb $  over $k$,  
the  unipotent radical  $R_u(\Gb)$   of $\Gb $ is the largest  closed 
connected normal  unipotent subgroup of $\Gb$.

\begin{Proposition} \label{Outgen}  Suppose that $C$ is a local algebra.  Then, 

(i)  $\Ker(f_1) $ is a unipotent subgroup of $\Aut(C)$.

(ii)  $\Inn (A)  \leq \Ker(f_1) $.

(iii) There exists a  bijective  morphism of  algebraic  groups
$$ \hat f \, : \, \Out^0(C)/R_u(\Out^0(C)) \to  
f_1( \Aut(C))^0/R_u(f_1( \Aut(C))^0). $$ 
\end{Proposition}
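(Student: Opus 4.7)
The plan is to exploit the radical filtration of $C$ together with Jordan decomposition in $\GL(C)$. For (i), every $\varphi \in \Aut(C)$ preserves the flag $C \supset J \supset J^2 \supset \cdots$ where $J := J(C)$, and since $C$ is local $\varphi$ acts as the identity on $C/J \cong k$. If in addition $\varphi \in \Ker(f_1)$, so that $\varphi$ is trivial on $J/J^2$, Lemma \ref{kulsconv} lets us propagate triviality to every $J^n/J^{n+1}$: these quotients are spanned by length-$n$ products of elements of $J$, so the induced action is determined by that on $J/J^2$. Hence $\varphi - \Id$ is nilpotent, so $\varphi$ is unipotent in $\GL(C)$ and $\Ker(f_1)$ is unipotent.

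For (ii), write a unit $x \in C$ as $x = \lambda(1+j)$ with $\lambda \in k^{\times}$ and $j \in J$. For $y \in J$, $c_x(y) - y = (xy - yx)x^{-1} = \lambda(jy - yj)x^{-1}$, which lies in $J^2 \cdot C \subseteq J^2$. Hence $f_1(c_x) = \Id$ and $\Inn(C) \leq \Ker(f_1)$.

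For (iii), set $G := f_1(\Aut(C))$. By (ii), $f_1$ factors through $\Out(C)$ as $\tilde f \colon \Out(C) \to G$. Since $\Inn(C)$ is the image of the connected group $C^{\times}$ (a principal open in $C$), it is connected and lies in $\Aut^0(C)$, so $\tilde f(\Out^0(C)) = G^0$ (the image is connected and has finite index in $G$). The kernel $\bar K$ of $\tilde f|_{\Out^0(C)}$ is the image of $\Ker(f_1) \cap \Aut^0(C)$ and is unipotent by (i). Because $\tilde f(R_u(\Out^0(C)))$ is connected, normal, and unipotent in $G^0$, it lies in $R_u(G^0)$, and $\tilde f$ descends to a surjective morphism
$$
\hat f \colon \Out^0(C)/R_u(\Out^0(C)) \twoheadrightarrow G^0/R_u(G^0).
$$

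The main step, and the expected obstacle, is showing $\hat f$ is injective. Let $H := \tilde f^{-1}(R_u(G^0))$, so $\Ker(\hat f) = H/R_u(\Out^0(C))$. A Jordan-decomposition argument in $\Out^0(C)$ will show $H$ is itself unipotent: for $x \in H$ with Jordan decomposition $x = x_s x_u$, $\tilde f(x_s)$ is the semisimple part of the unipotent element $\tilde f(x)$, so $\tilde f(x_s) = 1$, forcing $x_s \in \bar K$; since $\bar K$ is unipotent and $x_s$ is semisimple, $x_s = 1$. Thus $H^0$ is a connected normal unipotent subgroup of $\Out^0(C)$, so $H^0 = R_u(\Out^0(C))$, and $\Ker(\hat f)$ is a finite unipotent normal subgroup of the connected reductive group $\Out^0(C)/R_u(\Out^0(C))$. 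Any finite normal subgroup of a connected algebraic group is central, and the center of a connected reductive group is of multiplicative type, hence carries no nontrivial $p$-torsion on $k$-points in characteristic $p$. Therefore $\Ker(\hat f) = 1$ and $\hat f$ is bijective.
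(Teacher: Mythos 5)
Your parts (i) and (ii) match the paper's proof in substance: (i) shows $\Ker(f_1)$ acts trivially on each graded piece $J^n/J^{n+1}$, and (ii) writes a unit as $\lambda(1+j)$ and checks conjugation is trivial modulo $J^2$. (You invoke Lemma~\ref{kulsconv} to propagate triviality; the paper treats the propagation as immediate, but the content is the same.)

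Part (iii) is correct but takes a genuinely different route. The paper's argument is short because it cites Humphreys for the fact that a surjection of connected algebraic groups carries the unipotent radical onto the unipotent radical of the target; combined with $\Ker(\bar f_1) \leq R_u(\Out^0(C))$, injectivity of $\hat f$ drops out at once from $\bar f_1^{-1}(R_u) = R_u \cdot \Ker = R_u$. You avoid that citation and instead show directly that $H := \tilde f^{-1}(R_u(G^0)) \cap \Out^0(C)$ is unipotent via Jordan decomposition (a morphism preserves Jordan decomposition, so semisimple parts land in the unipotent kernel and hence vanish), deduce $H^0 = R_u(\Out^0(C))$, and then kill the finite remainder $\Ker(\hat f) = H/H^0$ using rigidity (finite normal subgroups of connected groups are central) plus the fact that the center of a connected reductive group over an algebraically closed field of characteristic $p$ is of multiplicative type and so has no nontrivial unipotent $k$-points. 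Your route is more self-contained and avoids relying on the "image of unipotent radical equals unipotent radical" theorem, at the cost of invoking more structure theory (rigidity, centers of reductive groups); the paper's route is shorter but leans on the cited result. Both yield the claim.
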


The proof of the above is a  consequence of the following standard 
structural result.

\begin{Lemma} \label{alguse}  (i)   Let $\Gb $  be  a connected  
linear algebraic group  $\Gb $  over $k$.  Then $R_u(\Gb)$  contains  every 
normal unipotent subgroup of $\Gb$.

(ii) Let $\varphi : \Gb \to \Gb' $ be an 
epimorphism of connected algebraic groups  such that $\Ker(\varphi)$ is a 
unipotent subgroup of $\Gb$.   Then there is a bijective morphism
$$ \hat \varphi :  \Gb/R_u(\Gb)  \to  \Gb'/R_u(\Gb') $$
such that $\hat\varphi(gR_u(\Gb))=\varphi(g)R_u(\Gb') $ for  all 
$g \in \Gb$.

\end{Lemma}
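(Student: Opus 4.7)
For part (i), let $N$ be a normal unipotent subgroup of $\Gb$. Since unipotence is preserved by Zariski closure, I may replace $N$ by its closure and assume $N$ is closed. Consider the image $\bar N$ of $N$ in the reductive quotient $\Gb/R_u(\Gb)$. The identity component $\bar N^0$ is a closed connected normal unipotent subgroup of a reductive group, hence trivial by the very definition of $R_u$. Thus $\bar N$ is finite; being normal in a connected group, it lies in the center (connectedness forces the conjugation action on the finite set $\bar N$ to be trivial). The center of a reductive group over an algebraically closed field is diagonalizable, so contains no non-identity unipotent elements, and hence $\bar N = 1$, i.e.\ $N \subseteq R_u(\Gb)$.

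For part (ii), I would deduce the claim from (i). First I show $\varphi(R_u(\Gb)) = R_u(\Gb')$. The inclusion $\subseteq$ is immediate, since the image under $\varphi$ of a closed connected normal unipotent subgroup of $\Gb$ has the same four properties in $\Gb'$. For the reverse inclusion, set $H = \varphi^{-1}(R_u(\Gb'))$; then $H$ fits in a short exact sequence with unipotent kernel $\ker(\varphi)$ and unipotent quotient $R_u(\Gb')$, so a Jordan-decomposition argument shows $H$ is itself unipotent. Thus $H^0 \subseteq R_u(\Gb)$, while the reverse inclusion $R_u(\Gb) \subseteq H^0$ is clear because $R_u(\Gb)$ is connected and maps into $R_u(\Gb')$. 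So $H^0 = R_u(\Gb)$, and as $\varphi(H^0)$ is a closed connected subgroup of finite index in the connected group $R_u(\Gb')$, it must equal $R_u(\Gb')$.

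Consequently $\varphi$ descends to a surjective morphism of algebraic groups $\hat\varphi \colon \Gb/R_u(\Gb) \to \Gb'/R_u(\Gb')$ satisfying the required formula. Its kernel $H/R_u(\Gb) = H/H^0$ is finite, unipotent, and normal in the reductive group $\Gb/R_u(\Gb)$; applying part (i) in that reductive group forces this kernel to be trivial, so $\hat\varphi$ is bijective. The whole argument is essentially a diagram chase once one invokes three standard facts — images of algebraic group morphisms are closed, an extension of a unipotent algebraic group by a unipotent algebraic group is unipotent, and the center of a reductive group over an algebraically closed field is diagonalizable. The only real care needed is the bookkeeping of identity components across $\varphi$; I do not anticipate any deeper obstacle.
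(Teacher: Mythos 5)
Your proof is correct. The paper disposes of both parts by citation to Humphreys: part~(i) is \cite[Theorem 30.4(b), Exercise 26.14]{humph:book}, and for part~(ii) the paper cites \cite[Corollary 21.3F]{humph:book} for the identity $\varphi(R_u(\Gb)) = R_u(\Gb')$ (which in fact holds for any epimorphism of connected groups, with no hypothesis on the kernel) and then notes that together with $\Ker(\varphi)\subseteq R_u(\Gb)$ from~(i) this forces $\varphi^{-1}(R_u(\Gb'))=R_u(\Gb)$, giving bijectivity. You instead prove everything from first principles, and this is where your route differs: your argument for $\varphi(R_u(\Gb))=R_u(\Gb')$ genuinely \emph{uses} the unipotence of the kernel (the preimage $H=\varphi^{-1}(R_u(\Gb'))$ is an extension of unipotent by unipotent, hence unipotent, so $H^0\subseteq R_u(\Gb)$), rather than invoking the stronger general theorem. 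Your proof of~(i) via the reductive quotient --- killing the identity component by definition of $R_u$, then killing the finite remainder because it is central and unipotent inside a diagonalizable center --- is a clean self-contained alternative to the Humphreys references. In short, same skeleton (show $\varphi^{-1}(R_u(\Gb'))=R_u(\Gb)$), but your version is more elementary and more transparent about exactly where the hypothesis on $\Ker(\varphi)$ enters, at the cost of reproving standard facts the paper simply cites.
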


\begin{proof} (i)  This  
follows  by \cite[Theorem 30.4 (b)]
{humph:book} and \cite[Exercise 26.14] {humph:book}.

(ii) Set $K= \Ker (\varphi)$.   By (i), $ K \leq R_u(\Gb)$  and by 
\cite[Corollary 21.3F,Theorem 30.4 (b), Exercise 26.14]{humph:book}, 
$\varphi(R_u(\bar G) ) =  R_u(\Gb' )$.  The result follows.

\end{proof}

\begin{prfoutgen} (i)   Since $ C $ is local, $\Aut(C)  =\Ker (f_0 )$. 
On the other hand, $\Ker(f_1)  \leq  \Ker( f_i)$,  for all  
$ i \geq 1 $.  Hence, if 
$\varphi \in \Ker(f_1)$, then  $ \varphi $  induces  the identity  on 
$J^i(C)/J^{i+1}(C)$ for all  $i \geq 0 $.   This proves (i).

(ii)    Let $ x $ be an invertible element of  $C$. Since $C$ is local, 
$ x = \lambda (1 + y )$ for some $0\ne \lambda \in k $ and  some $y \in J$.  
So $x^{-1} = \lambda^{-1} (1+ y') $, $y ' \in J$ and conjugation by $x$ 
induces the identity on $J(C)/J^2(C)$.

(iii)  By \cite[Proposition  7.4B]{humph:book},  
$f_1(\Aut^0(C))= f_1( \Aut(C))^0$.   So, by   (ii), the restriction of $f_1$ 
to   $\Aut^0(C) $  yields a surjective morphism
$$ \bar f_1 \, : \,   \Out^0(C) \to   f_1( \Aut(C))^0. $$
By (i), the  kernel of  $ f_1 $ is a  unipotent subgroup of $\Aut(C)$, hence 
the kernel  of  $\bar f_1 $ is a unipotent subgroup of $\Out^0(C)$.
So (iii) follows from Lemma   \ref{alguse} applied  with $\varphi $ equal  to 
$\bar f_1 $.

\end{prfoutgen}

For  $u, v$ elements of a  ring, $[u,v]$ denotes the commutator  $uv-vu $ of 
$u$ and $v$. 

\begin{Lemma} \label{geninn} Let $C$ be a  $k$-algebra such that 
$J^3(C) \subseteq  Z(C) $. For $ u \in J(C)$, let $\varphi_u : C \to C $ 
be the   automorphism defined by  $\varphi_u(v) =  (1-u) v (1-u)^{-1} $, 
$ v\in C$. 
Then, for  any $u,  v \in  J(C)$, $\varphi_u(v) = v + [v,u] + [vu, u] $.

\end{Lemma}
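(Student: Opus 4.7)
The plan is to compute $\varphi_u(v)$ by direct expansion, exploiting the nilpotence of $u$ to make $(1-u)^{-1}$ into a finite sum, and then collapsing the higher-order terms using the centrality hypothesis $J^3(C) \subseteq Z(C)$.

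First, since $u$ lies in the nilpotent Jacobson radical $J(C)$, the element $1-u$ is a unit in $C$ with inverse given by the (finite) geometric series $(1-u)^{-1} = \sum_{i \geq 0} u^i$; in particular $\varphi_u$ is a well-defined (inner) automorphism of $C$. Multiplying out,
$$\varphi_u(v) = (1-u)\,v\,(1-u)^{-1} = (v - uv)\sum_{i \geq 0} u^i = v + \sum_{i \geq 1}\bigl(vu^i - uvu^{i-1}\bigr).$$
The $i=1$ term is exactly $vu - uv = [v,u]$, and the $i=2$ term is $vu^2 - uvu = [vu, u]$. So it remains to show that every summand with $i \geq 3$ vanishes.

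This is where the hypothesis $J^3(C) \subseteq Z(C)$ comes in. For $v \in J(C)$ and $i \geq 3$, the element $vu^{i-1}$ lies in $J^i(C) \subseteq J^3(C)$ and is therefore central in $C$. Consequently $uvu^{i-1} = vu^{i-1} \cdot u = vu^i$, which cancels the $vu^i$ contribution. Collecting everything gives $\varphi_u(v) = v + [v,u] + [vu,u]$, as claimed.

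There is no real obstacle here beyond careful bookkeeping; the only subtlety is recognizing that the $i=2$ commutator $[vu,u]$ must be separated from the higher terms because $vu$ is not a priori central (it only lies in $J^2(C)$), whereas from $i=3$ onward the relevant factor lands in $J^3(C)$ and centrality forces the cancellation.
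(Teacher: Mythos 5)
Your proof is correct and follows essentially the same route as the paper: expand $(1-u)^{-1}$ as a finite geometric series, group the terms into commutators $[vu^{i-1},u]$, and observe that for $i\geq 3$ the element $vu^{i-1}$ lies in $J^3(C)\subseteq Z(C)$, killing all higher terms. Your bookkeeping with the $i$-indexed sum is a harmless rewording of the paper's telescoping expansion $v+[v,u]+[vu,u]+\cdots+[vu^{n-1},u]$, and the justification for vanishing is identical.
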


\begin{proof}  Let $ u, v \in  J(C)$ and suppose that $u^n =0 $. Then,
$(1-u)^{-1}= 1+ u  + \cdots + u^{n-1} $, and for any $v \in   J(C)$, 
$$ \varphi_u(v) = v + [v, u] + [vu, u] + \cdots+ [vu^{n-1}, u]. $$   
Since  $ J^3(C)  \subseteq Z(C)$, the result follows.
\end{proof}

\section {Preliminary Calculations}
For the rest of the paper, unless stated otherwise,  $k$ will denote an 
algebraically closed field of characteristic $p$ and 
$B =k\langle X, Y  \rangle /\langle X^3, Y^3, XY+YX \rangle $  be as in Theorem \ref{stabletomor}.  
Throughout this section $A$ will denote  a   $k$-algebra satisfying the 
following:

\begin{Hypothesis} \label{Ahyp1} $A$ is symmetric,  local, 
$\dim_k (A)  = 9 $, and $Z(A) \cong Z(B)$ as $k$-algebras.
\end{Hypothesis}

We fix  a symmetrizing form  $s \, : \, A \to k $ on $A$. For a subset 
$U$ of $A$ denote by $U^{\perp}$  the subset of  $A$ consisting of elements  
orthogonal to $U$ under the bilinear form associated to   $s$, that is 
$U^{\perp}$    is the set of elements $a' \in A$ such that  $s(aa')=0 $ 
for all   $ a\in U$.   For each $i \geq 0 $, let 
$J_i= J^i(A) $, $Z= Z(A)$,  $Z_i:=Z(A)\cap J^i(A)$.

\begin{Lemma}\label{centerA} Suppose  that $A$ satisfies Hypothesis
\ref {Ahyp1}. Then, $Z$ has a  $k$-basis $\{1,  z_i , 1\leq i\leq 5 \}$ 
such that  
$$ z_1z_2 = z_2z_1= z_5, \   z_iz_j =  0 
\text{ \, for all \, }  i, j  \text {\, s.t. \ }  \{i,j \} \ne \{1,2\}  $$
and  $\{z_i , 1\leq i\leq 5 \}$  is  a $k$-basis of  $Z_1$.
Further,  $\{z_i , 3\leq i\leq 5 \}$    is a $k$-basis  of 
$\Soc(Z(A)) $ and $\dim_k[A,A] =3 $.
\end{Lemma}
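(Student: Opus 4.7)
The plan is to describe $Z(B)$ explicitly from the presentation and then transport the resulting structure via the assumed algebra isomorphism $Z(A) \cong Z(B)$. First, the relation $XY+YX = 0$ puts $B$ in normal form with $k$-basis $\{X^iY^j : 0 \le i,j \le 2\}$, so $\dim_k B = 9$. A routine commutator calculation (using $X^iY^j\cdot X = (-1)^j X^{i+1}Y^j$ and $Y\cdot X^iY^j = (-1)^i X^iY^{j+1}$, together with the fact that $2 \ne 0$ in characteristic $3$) shows that $\sum c_{ij}X^iY^j$ is central if and only if $c_{01}=c_{10}=c_{11}=0$. Hence $Z(B)$ has the six-element $k$-basis $\{1, X^2, Y^2, X^2Y, XY^2, X^2Y^2\}$.

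Next I would set $z_1=X^2$, $z_2=Y^2$, $z_3=X^2Y$, $z_4=XY^2$, $z_5=X^2Y^2$, and use the identities $YX^2 = X^2Y$ and $Y^2X = XY^2$ (immediate from $YX=-XY$) to verify by inspection that $z_1z_2=z_2z_1=z_5$ while every other pairwise product among the $z_i$ vanishes. In particular $J(Z(B))$ is spanned by $z_1,\ldots,z_5$ and $\Soc(Z(B))$ by $z_3,z_4,z_5$.

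To finish, I would transfer $\{1,z_1,\ldots,z_5\}$ to $Z = Z(A)$ via the given isomorphism; the multiplication table, the radical, and the socle all transport since they are intrinsic algebraic invariants. Lemma~\ref{locsymgen}(v) identifies $Z_1 = Z \cap J(A)$ with $J(Z)$, which is therefore spanned by $z_1,\ldots,z_5$. Finally $\dim_k[A,A] = \dim_k A - \dim_k Z = 9 - 6 = 3$ by Lemma~\ref{locsymgen}(iv). The only substantive step is the explicit description of $Z(B)$; everything else is formal transfer, so no real obstacle is anticipated.
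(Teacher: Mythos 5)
Your proposal is correct and follows essentially the same route as the paper: compute $Z(B)$ explicitly from the presentation, exhibit the distinguished basis with its multiplication table, transport it through the isomorphism $Z(A)\cong Z(B)$, and read off $Z_1$, $\Soc(Z(A))$, and $\dim_k[A,A]$ from Lemma~\ref{locsymgen}. Your basis $\{X^2,Y^2,X^2Y,XY^2,X^2Y^2\}$ is the same set as the paper's $\{X^2,Y^2,XY^2,YX^2,X^2Y^2\}$ since $YX^2=X^2Y$, so the only difference is labeling, and your explicit verification of the centrality condition fills in a computation the paper treats as routine.
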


\begin{proof}   Since $Z(A) \cong Z(B)$,  in order to prove 
the first statement, it suffices to    prove an analogous statement for  
$Z(B)$.  Set $$ z_1':=X^2, z_2':= Y^2, z_3':= XY^2,  z_4':= YX^2, z_5':=  
X^2Y^2 \in Z(B). $$
Then  $\{1,  z_i' , i\leq i\leq 5 \}$  is a basis of $Z(B)$  and 
$ z_1'z_2' = z_2'z_1'= z_5' $  and all other pairwise  products   are $0$.    
Setting $z_i $ to be the image of $z_i $  under some isomorphism 
$Z(A) \cong Z(B)$ yields  the first statement.    
The  elements  $z_i $, $1\leq i \leq 5 $  are all nilpotent and central in 
$A$,  hence $ z_i \in  Z_1 $ for $ 1\leq i \leq 5 $. 
This proves the second assertion.  The   assertion  on $\Soc(Z(A)) $  
is  immediate from the multiplication of the $z_i'$ and the last assertion 
follows by Lemma  \ref{locsymgen}.
\end{proof}

\begin{Lemma}\label{elemfact2}
Suppose  that $A$ satisfies Hypothesis \ref {Ahyp1}.   For $ x\in J$ and 
$ i\in {\mathbb N}$,  denote by 
$C_{J_i}(x)$ the subring of $J_2$ of elements  of $J_2$ which commute with $x$.

(i)  $J_2 \nsubseteq  Z_2$.

(ii) $J_3 \subseteq Z_3 $.

(iii)  $\dim_k   (J_1/(Z_1+J_2))  =2 $ and  $\dim_k (J_2/ Z_2) =1 $.

(iv)   There exists a basis 
$$\{x+ Z_1+ J_2, y +Z_1+J_2 \} $$   of  $J_1/(Z_1+J_2) $ such that 
 $$C_{J_2}(x) =  C_{J_2}( y)  = Z_2. $$

(v)  $Z_1 $ is an ideal of  $A$.

(vi) For any $ u,v  \in J$, $uv +vu \in Z_2$.

(vii)  $ Z_1^2=\Soc(A)$.

(viii) $[A, A] \nsubseteq Z_1$ and  $ \Soc(Z(A)) = [A,A]\cap Z_1  
\oplus  \Soc(A) $.

(ix)  $J_5 =0 $, $ J_4=  Z_1^2=\Soc(A)$.

\end{Lemma}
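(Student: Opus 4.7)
My plan is to work through the nine parts in three waves, anchored by the structure of $Z(A)$ from Lemma \ref{centerA} together with the Kulshammer Lemmas \ref{kulsconv} and \ref{kulsconv2}.

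\emph{Wave 1 (radical filtration of $A$).} Set $a_i := \dim_k J^i(A)/J^{i+1}(A)$. Since $A$ is symmetric local, the symmetrizing form $s$ induces a bimodule isomorphism $A \cong A^*$ that makes the radical filtration self-dual, so $(a_i)$ is palindromic: $a_i = a_{n-i}$ where $n$ is maximal with $J^n \neq 0$. Combined with $\sum a_i = 9$, $a_0 = a_n = 1$ (Lemma \ref{locsymgen}(i),(vi)), $a_1 \geq 2$ (since $\dim[A,A] = 3$ by Lemma \ref{locsymgen}(iv), forcing non-commutativity), and the constraint from Lemma \ref{kulsconv2} that any interior $a_j = 1$ would force $J^{j-1} \subseteq Z$ (incompatible with $\dim Z = 6$), one is forced into $(a_i) = (1,2,3,2,1)$. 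Immediate consequences: $J^5 = 0$ and $\Soc(A) = J^4$ is $1$-dimensional (two claims of (ix)); $\dim J^2 = 6 > 5 = \dim Z_1 \geq \dim Z_2$ gives (i); and Lemma \ref{kulsconv2} applied to $a_4 = 1$ gives (ii), whence also $Z_3 = J^3$ (dim $3$) and $Z_4 = J^4$ (dim $1$).

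\emph{Wave 2 (locating $Z_1$).} The main obstacle is the intermediate claim $Z_1 \subseteq J^2$, equivalently $Z_1 = Z_2$, so that $\dim Z_2 = 5$. Once established, (iii) follows from $\dim J_2/Z_2 = 6 - 5 = 1$ and $\dim J_1/(Z_1 + J_2) = \dim J_1/Z_1 - \dim J_2/Z_2 = 3 - 1 = 2$; moreover $Z_1^2 \subseteq J^4 = \Soc(A)$ with both sides $1$-dimensional (the left by Lemma \ref{centerA}) forces $Z_1^2 = \Soc(A) = J^4$, completing (vii) and (ix). To prove $Z_1 \subseteq J^2$, I would suppose some basis element $z_i$ of $Z_1$ from Lemma \ref{centerA} has nonzero image in $J/J^2$ (a $2$-dimensional space), choose a generating set for $J/J^2$ containing $\bar z_i$, expand $J^2/J^3$ and $J^3/J^4$ in products of these generators via Lemma \ref{kulsconv}, and use the stringent multiplication rules ($z_1 z_2 = z_5$, $z_i z_j = 0$ otherwise) to derive a contradiction with either the Hilbert series ($a_2 = 3$, $a_3 = 2$) or with $\dim Z_1^2 = 1$.

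\emph{Wave 3 (remaining parts).} For (v), the identity $[b,za] = -z[a,b]$ shows $Z_1$ is an ideal iff $Z_1 \cdot [A,A] = 0$; since $Z_1, [A,A] \subseteq J^2$ we have $Z_1 \cdot [A,A] \subseteq J^4 = \Soc(A)$, and $z[a,b] = [za,b] \in [A,A]$, so $Z_1 \cdot [A,A] \subseteq \Soc(A) \cap [A,A] = 0$ by Lemma \ref{locsymgen}(iii). For (vi), the anticommutator induces a well-defined symmetric $k$-bilinear form $B : J/J^2 \times J/J^2 \to J^2/Z_2$, $(\bar u, \bar v) \mapsto \overline{uv+vu}$ (well-defined by (ii), since $[J^2, J] \subseteq J^3 \subseteq Z_2$); using the $1$-dimensionality of $J^2/Z_2$ established in Wave 2 and the constraints from Lemma \ref{locsymgen}(iii) together with $(\dim [A,A] \cap Z) = 2$ (pinned in Wave 3 below), a direct check forces $B \equiv 0$, so $uv+vu \in Z_2$. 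For (viii), Lemma \ref{locsymgen}(iii) ensures the sum $\Soc(A) + ([A,A] \cap Z_1)$ inside $\Soc(Z)$ is direct; $\dim \Soc(Z) = 3$ and $\dim \Soc(A) = 1$ pin $\dim([A,A] \cap Z_1) = 2 < 3 = \dim[A,A]$, hence $[A,A] \nsubseteq Z_1$ and the decomposition is as claimed. For (iv), use (iii) to pick lifts $x, y \in J_1$ whose images form a basis of $J_1/(Z_1+J_2)$; the containment $C_{J_2}(x) \supseteq Z_2$ is automatic and $\dim C_{J_2}(x) \in \{5, 6\}$, so it suffices to find $u \in J_2 \setminus Z_2$ with $[u, x] \neq 0$, which holds for $x$ generic in its coset (and similarly for $y$) by the structural conclusions of Waves 1 and 2.
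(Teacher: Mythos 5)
The proposal is fundamentally flawed in two places, both of which are shown to be false by the very results this lemma is used to prove later in the paper.

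\emph{First flaw: palindromicity of the Loewy series.} In Wave 1 you assert that the Loewy series $(a_i)$ of a local symmetric algebra is palindromic, and deduce $(a_i) = (1,2,3,2,1)$. This is not a theorem: symmetry gives a duality between the radical filtration and the socle filtration, i.e.\ $\dim J^i = \dim A - \dim \Soc^i(A)$, but it does not identify the two filtrations, so there is no palindromicity constraint on $(\dim J^i/J^{i+1})_i$ in general. Concretely, the paper's Proposition \ref{firstreduction} proves that for $A$ as in Hypothesis \ref{Ahyp1} the Loewy layers are either $(1,3,2,2,1)$ or $(1,2,3,2,1)$; the first is not palindromic, and Proposition \ref{3nailstrcha} produces explicit symmetric local $9$-dimensional algebras with $Z(A) \cong Z(B)$ realizing it. So your Wave~1 derivation of $\dim J_2 = 6$, and with it your one-line proofs of (i) and (ix), cannot be carried through.

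\emph{Second flaw: $Z_1 \subseteq J^2$.} The whole of Wave 2 is aimed at showing $Z_1 = Z_2$. This is again false: when $\dim J_1/J_2 = 3$ (the Hypothesis \ref{Ahyp3} case, realized by the algebras of Proposition \ref{3nailstrcha}), the element $z$ in the chosen basis of $J_1/J_2$ lies in $Z_1$ but not in $J_2$, so $Z_1 \nsubseteq J_2$. Consequently there is no contradiction to be found by your contemplated computation, and the derivations you hang off of Wave 2 — (iii) via a dimension count, (vii) via $Z_1^2 \subseteq J^4$, and (v) via $Z_1 \cdot [A,A] \subseteq J^4$ — all need $Z_1 \subseteq J^2$ and therefore fail. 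The actual proofs of (v), (vi), (vii) in the paper go through parts (iii) and (iv): one chooses generators $x,y$ modulo $Z_1 + J_2$ whose centralizers inside $J_2$ are exactly $Z_2$, and uses that $\{1, x, y\} \cup {\mathcal A}'$ generates $A$, with ${\mathcal A}' \subseteq Z_1$, to reduce the ideal/anticommutator statements to computations against $x$ and $y$ alone. That structural step (finding $x, y$ with $C_{J_2}(x) = C_{J_2}(y) = Z_2$, part (iv)) is the engine of the lemma and is absent from your proposal; without it, your (vi) argument (``a direct check forces $B \equiv 0$'') is unsubstantiated, and your (viii) argument is circular — you assume $\Soc(Z) = \Soc(A) + ([A,A] \cap Z_1)$ in order to pin $\dim([A,A] \cap Z_1) = 2$, but that equality is precisely the thing you need to prove, and the paper establishes it only after showing $[A,A] \cap Z_1 \subseteq \Soc(Z)$ via (vii) and the codimension bound from (iii).

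In short, you have replaced the paper's careful chain (i) $\Rightarrow$ (ii) $\Rightarrow$ (iii) $\Rightarrow$ (iv) $\Rightarrow$ (v) $\Rightarrow$ (vi), (vii) $\Rightarrow$ (viii), (ix) — which never pins down $\dim J_1/J_2$ — by two structural shortcuts that the paper's own later sections demonstrate to be false. The parts of your argument that are sound (the $C_{J_2}(x) \supseteq Z_2$ observation, the well-definedness of the anticommutator form, the formula $z[a,b] = [za,b]$) could be salvaged, but only after re-deriving (iii) and (iv) by the paper's generator-counting method, not by a dimension computation that assumes more about the Loewy structure than is true.
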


\begin{proof} (i)   Note that since $A$ is local, any  subspace of $J_1$ 
which contains $J_2 $  is an ideal of $A$, and further  since 
$[A,A] \subseteq J_2$,  any such  subspace contains $[A,A]$.

Suppose  if possible that $J_2 \subseteq  Z_2 \subseteq Z_1$.  
Then  $Z_1$ and hence $Z_1^2 $ is an 
ideal of  $A$. In particular, 
$Z_1^2 \cap \Soc(A) \ne 0 $.  By   Lemma  \ref{locsymgen}, $\Soc(A)$ 
is $1$-dimensional and by  Lemma \ref{centerA}, $Z_1^2$ is $1$-dimensional. 
Hence, $Z_1^2 = \Soc(A)$, whence by  Lemma  \ref{locsymgen},
$[A, A] \cap  Z_1^2 =0 $.     On the other hand, since  
$ [A, A] \subseteq Z_1 $, and since 
$za=az $ for any $a \in A$, $z\in Z_1$,   
$$[A,A] Z_1  \subseteq [A,A] \cap  Z_1^2.  $$  Thus,  
$[A,A] Z_1=0 $, that is $[A,A]\subseteq \Soc(Z(A))$.
But  $$\dim_K ([A, A])= 3 = \dim_k (\Soc (Z(A))), $$  hence   
$[A, A] =  \Soc(Z(A)) $. 
This is a contradiction as by Lemma \ref{locsymgen} (ii), 
 $ \Soc(A) \subseteq   \Soc(Z(A))$  whereas  by Lemma 
\ref{locsymgen} (iii)  $[A, A] \cap \Soc(A) =0 $.
 
(ii)  By (i) and Lemma \ref{kulsconv2}, $\dim_k(J_3/J_4) \geq 2 $,
$\dim_k(J_2/J_3) \geq 2 $ and $\dim_k(J_1/J_2) \geq 2$. So, 
$\dim_k(J_4) \leq 2 $.   By Lemma \ref{locsymgen} (i),(vi), it follows that 
$\dim_k(J_4/J_5) \leq 1$. Hence  by Lemma \ref{kulsconv2},  $J_3 \subseteq Z$.

(iii) Let $c$ be the  codimension of $J_2 +  Z_1$ in $J_1$.  
Let  ${\mathcal A}$ be a 
basis  of  a complement of $J^2(A) +Z_1$ in $J(A)$ and  let  
${\mathcal A}'$ be a  (possibly empty)
basis  of  a complement of $J^2(A)$ in $J^2(A) +Z_1$ 
consisting of elements of $Z_1$. Then 
$\{1\} \cup {\mathcal A} \cup {\mathcal A}'  $ is a generating set  for 
$A$ as an algebra.   
If  the elements of $ {\mathcal A} $  pairwise commute, then 
$\{1\} \cup {\mathcal A} \cup {\mathcal A}' $   is a set of pairwise 
commuting generators of $A$, an impossibility as  $A$ is not  commutative. 
So  $c \geq 2 $.  On the other hand, the codimension of $Z_1$ in $J_1$ is $3$, 
so $c \leq 3$.  Suppose if possible that $c=3$.  Then 
$$\dim_k (J_2 +  Z_1) =5   =\dim_k (Z_1),$$  hence $J_2 \subseteq Z_1 $, 
a contradiction to (i).  Thus, $c=2 $. The second assertion is immediate from 
the first and (i).

(iv)    Let ${\mathcal A}=\{x,  y \} $ and ${\mathcal A}'$ 
be as in (ii). Since  $\{1\}   \cup {\mathcal A} \cup {\mathcal A}' $   
is a  generating set  for $A$ as an algebra,
$$ Z_2 = C_{J_2} (x) \cap  C_{J_2} ( y). $$ 
As $\dim _k Z_1=5 $ and  $x \in  C_{J_1} (x) $, 
$\dim_kC_{J_1} (x) \geq 6 $. Similarly, 
$\dim_k C_{J_1} ( y) \geq 6 $. On the other hand,
since $ \dim_kJ_1=8 $,  and $\dim _k Z_1=5 $,   at least one of  
$\dim_k C_{J_1} (x)$ and $\dim_k C_{J_2} ( y)$ is at most $6$.
Suppose that  $\dim_k C_{J_1} (x)=6$ and  $\dim_k C_{J_1} ( y)=7$.
Then by the same argument  applied to  $\{ x+  y,  y \}$, 
it follows that    $\dim_k C_{J_1} (x+ y)  =6 $.  Hence replacing
 $ y $ with $x+ y$, we may assume that 
 $$\dim_k C_{J_1} (x)=6  =  \dim_k C_{J_1} ( y).   $$
Thus, $C_{J_1} (x) $ is the $k$-span of 
$\{x, Z_1 \} $ and  $C_{J_2} ( y) $ is the $k$-span of 
$\{ y, Z_1 \} $.Consequently,  
$C_{J_2}(x)   \subseteq  Z_2 $ and 
$C_{J_2}( y) \subset  Z_2$.

From now on let  ${\mathcal A}=\{x,  y\}  \subseteq J_1$
 be such that $\{x+ Z_1+ J_2, y +Z_1+J_2 \} $   is a basis of  
$J_1/(Z_1+J_2) $  and $C_{J_2}(x) =  C_{J_2}( y)  = Z$.
 Let 
$ {\mathcal A}'  $  be a basis   of a complement of $J_2 $ in 
$J_2+Z_1$ consisting of elements of   $Z_1$.

(v)  Since $A$ is local,  and since   ${\mathcal A}'  \subseteq Z_1$, 
it suffices to prove that    $x Z_1  \subseteq Z_2 $ and   
$x Z_1  \subseteq Z_2 $. So, let
$ z \in Z_1 $.  Then,  $x z \in C_{J_2} (x)$, so   by the 
above $xz\in  Z_2 $.   Similarly,  $ y z \in Z_2 $.

(vi)  By (ii), $J_3\subseteq Z_3 $ and by (v), $Z_1 $ is an ideal 
of $A$.  Hence it suffices to prove that $x^2,  y^2 ,
x y + y x  \in Z_2$. Since $x^2  \in  C_{J_2}(x)$, $x^2 \in Z_2$. 
Similarly,  $ y^2 \in Z_2$.    Also, 
$$ x (x y + y x)  =x^2 y +x y x = 
 y x^2 +x y x=   (x y + y x) x $$
and  similarly,
$$  y (x y + y x)  =(x y + y x)  y, $$
so  $x y + y x  \in Z_2$.

(vii) By (v), $Z_1$  is an ideal  of $A$.  So,  $0\ne Z_1^2$ is an ideal of 
$A$, and hence   $\Soc(A)  \subseteq  Z_1^2$. But  since $ Z_1^2$  
is one dimensional, it follows that  $\Soc(A)  = Z_1^2$.

(viii)  Since $ Z_1([A,A]\cap Z_1)  \subseteq  [A,A]\cap  Z_1^2  $
 and  since by (vii), $  Z_1^2  =\Soc(A)$,  by Lemma \ref{locsymgen} (iii),
$$ ([A,A]\cap Z_1) Z_1= 0. $$  
Hence  $ [A,A]\cap Z_1  \subseteq \Soc(Z(A)) $ and again  by Lemma  
\ref{locsymgen} (iii), $ [A,A]\cap Z_1 \cap \Soc(A) =0 $.  The result follows  
since by (iii), the co-dimension of   $[A,A]\cap Z_1  $ in $[A,A]$ is 
at most $1$.

(ix)  Let $x_i \in {\mathcal A} \cup {\mathcal A}' $, $1\leq i\leq 5 $.  
If one  of  the $x_i$'s  is  in ${\mathcal A}' $, then   by  (vi) and (vii)
$$ x_1x_2x_3 x_4 x_5  \in Z_1J_3 J_1 \subseteq Z_1^2J_1=  0 .$$  
So, in order to show that $J_5=0 $, we may assume that  $x_i \in  {\mathcal A}$, 
$ 1\leq i \leq 5 $.  First suppose that  for some $i $, $1\leq i \leq 4 $, $x_i=x_{i+1} $, 
say $x_i=x_{i+1}=x $, By (v) and (vi),  for some $r,s \geq 0 $ such that $ r+s=5 $,
 $$ x_1x_2x_3 x_4 x_5 = x^r y^s \in Z_1^2 J_1= 0 .$$   
Suppose now that no two consecutive $x_i $'s are equal, so 
$$ x_1x_2x_3 x_4 x_5 =  xyxyx $$ or 
$$ x_1x_2x_3 x_4 x_5 =  yxyxy .$$
In the former case,  by (ii) and (v),
$$ x_1x_2x_3 x_4 x_5 =x^2yxy $$ and we are back in the previous situation.
The  second case is similar.

Suppose if possible that $J_4 =0 $. Then $J_3 =\Soc(A) $ is  $1$-dimensional, 
and by Lemma \ref{kulsconv2},  $J_2 \subseteq Z_1$, a contradiction to (i).
\end{proof}

\begin{Lemma} \label{nailrads}   Suppose that  A satisfies Hypothesis 
\ref{Ahyp1} and let     $\{ x+  Z_1+J_2,  y + Z_1+J_2 \} $ be a basis of 
$J_1/(Z_1+J_2) $.  Then, $\{xy + Z_2 \} $ is a basis of $ J_2/Z_2 $, 
$\dim_k[A,A] +J_3/J_3 =1 $, $\{ [x,y] +J_3\} $ is a basis of 
$ ([A,A] +J_3)/J_3 $ and  $\{ [x, y], [x,  xy], [y, xy] \} $  is a basis of $[A, A]$,

\end{Lemma}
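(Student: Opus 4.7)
The plan is to exhibit an explicit $k$-basis of $A$ containing $xy$ and then compute $[A,A]$ directly from commutators of the basis elements.

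\textbf{Step 1.} Applying Lemma~\ref{elemfact2}(vi) with $(u,v) = (x,x), (y,y), (x,y)$ and using $2 \in k^{\times}$ (as $\mathrm{char}(k) = 3$), we obtain $x^2, y^2, xy+yx \in Z_2$. Lemma~\ref{elemfact2}(v) gives $J \cdot Z_1 \subseteq Z_1 \cap J_2 = Z_2$, and Lemma~\ref{elemfact2}(ii) gives $J_3 \subseteq Z_3 \subseteq Z_2$. Since $J = kx + ky + Z_1 + J_2$ by the basis hypothesis, expanding $J_2 = J \cdot J$ shows that every cross-term lies in $Z_2$ except the $k$-span of $\{x^2, xy, yx, y^2\}$; hence $J_2 \subseteq k \cdot xy + Z_2$. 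Combined with $\dim_k J_2/Z_2 = 1$ from Lemma~\ref{elemfact2}(iii), this forces $xy \notin Z_2$, proving the first assertion. As $xy \in J_2$, it also follows that $xy \notin Z_1$, and a stepwise argument through the filtration $0 \subset Z_1 \subset Z_1 + J_2 \subset J \subset A$ (using Lemma~\ref{centerA}) shows that $\{1, x, y, xy, z_1, \ldots, z_5\}$ is a $k$-basis of $A$.

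\textbf{Step 2.} Since $[A,A]$ is spanned over $k$ by commutators of pairs of basis elements of $A$, and since $1$ and the $z_i$ are central (Lemma~\ref{centerA}), the only nonvanishing commutators of basis elements are, up to sign, $[x,y]$, $[x,xy]$, $[y,xy]$. Hence $[A,A] = \mathrm{span}_k\{[x,y], [x,xy], [y,xy]\}$. By Lemma~\ref{locsymgen}(iv) and Lemma~\ref{centerA}, $\dim_k [A,A] = 9 - 6 = 3$, so these three elements form a basis of $[A,A]$.

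\textbf{Step 3.} It remains to analyze the filtration. One has $[x,xy] = x^2y - xyx \in J_3$ and $[y,xy] = yxy - xy^2 \in J_3$, while $[x,y] = 2xy - (xy+yx) \equiv 2xy \pmod{Z_2}$. Since $2 \neq 0$ in $k$ and $xy \notin Z_2$ by Step~1, one concludes $[x,y] \notin Z_2 \supseteq J_3$. Thus $([A,A] + J_3)/J_3$ is spanned by the nonzero class $[x,y] + J_3$, giving both $\dim_k ([A,A] + J_3)/J_3 = 1$ and the claim that $\{[x,y] + J_3\}$ is a basis.

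The main obstacle is Step~1, in which one must prove $xy \notin Z_2$ for an \emph{arbitrary} lift $x,y$ of a basis of $J/(Z_1 + J_2)$, without appealing to the centralizer refinement of Lemma~\ref{elemfact2}(iv). The characteristic~$3$ hypothesis is essential here: it ensures $x^2, y^2 \in Z_2$ via Lemma~\ref{elemfact2}(vi), which is what collapses $J_2$ modulo $Z_2$ onto the single class of $xy$.
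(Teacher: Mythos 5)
Your proof is correct and follows essentially the same outline as the paper's: reduce $J_2$ modulo $Z_2$ using Lemma~\ref{elemfact2}(v), (vi) and $J_3 \subseteq Z_2$ to obtain $\{xy + Z_2\}$ as a basis, then count $\dim_k[A,A]=3$ via Lemma~\ref{locsymgen}(iv). The one genuine variation is in your Step~3: you deduce $[x,y]\notin J_3$ directly from the identity $[x,y]=2xy-(xy+yx)\equiv 2xy \pmod{Z_2}$ together with $xy\notin Z_2$, whereas the paper reaches the same conclusion by citing Lemma~\ref{elemfact2}(viii) (that $[A,A]\not\subseteq Z_1$, hence $[A,A]\not\subseteq J_3$ since $J_3\subseteq Z$). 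Your route is slightly more self-contained — it reuses the invertibility of $2$ already required for $x^2,y^2\in Z_2$ and avoids appealing to part (viii) — while the paper's route is what makes (viii) earn its keep; either works. Your explicit listing of the basis $\{1,x,y,xy,z_1,\dots,z_5\}$ of $A$ also makes the spanning claim for $[A,A]$ a touch more transparent than the paper's phrasing that $\{x,y,[x,y],Z_1\}$ spans $J_1$, but the content is the same.
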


\begin{proof}   
By  Lemma \ref{elemfact2} (v) and (vi), 
$ Z_1J_1 \subseteq Z_2 $  and $ x^2, y^2, xy+yx \in Z_2 $, and by Lemma \ref{elemfact2} (i), 
$J_2 \nsubseteq  Z_2$.      Hence, $xy \notin Z_2$ and it follows by   
Lemma \ref{elemfact2} (iii) that  $\{xy + Z_2 \} $ is a basis of $ J_2/Z_2 $.

By  Lemma \ref{elemfact2} (ii),     $J_3\subseteq Z$ and  by Lemma \ref{elemfact2} (viii) 
$[A,A] \nsubseteq   Z $. So $[A,A] \nsubseteq J_3 $.  On the other hand,
$[A, A] +J_3/J_3 $ is spanned by $[x,y]$.
So, $\{ [x,y] +J_3 \}$ is a basis of $[A, A]+J_3/J_3 $ and 
$\dim_k ([A, A]+J_3/J_3)= 1 $.    It follows from this that  $\{x, y, [x,y], Z_1\} $ spans 
$J_1$, and hence that 
$\{[x,y], [x,xy], [y,xy] \} $ spans $[A,A] $.
Since $\dim_k[A,A]=3 $,  $\{[x,y], [x,xy], [y,xy] \} $ is a basis of 
$[A,A]$.  

\end{proof}

\begin{Proposition}\label{firstreduction}  
Suppose that  A satisfies Hypothesis \ref{Ahyp1}.
Then one of the following holds.
\begin{equation}    \dim_k(J_1/J_2) = 3, \dim_k(J_2/J_3) =2,   
\dim_k(J_3/J_4) =2,  \dim_k(J_4)=1  \end{equation}
\begin{equation} \dim_k(J_1/J_2) = 2, \dim_k(J_2/J_3) =3,   
\dim_k(J_3/J_4) =2,  \dim _k(J_4)=1  \end{equation}
Moreover, $Z_1 J_1 \subseteq  J_3 $.
\end{Proposition}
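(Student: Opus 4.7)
The plan is to first pin down the possible Loewy dimension sequences via general facts, then eliminate the bad case, and finally deduce $Z_1 J_1 \subseteq J_3$ by the same kind of length-$3$ monomial calculation. Writing $d_i := \dim_k J_i/J_{i+1}$, Lemma \ref{elemfact2}(ix) gives $\dim_k J_4 = 1$, so $d_1 + d_2 + d_3 = 7$. Lemma \ref{elemfact2}(iii) yields $d_1 \ge 2$, while Lemma \ref{kulsconv2} yields $d_2, d_3 \ge 2$: the case $d_2 = 1$ would force $J \subseteq Z$, contradicting non-commutativity of $A$ (since $\dim_k Z = 6 < 9 = \dim_k A$), and $d_3 = 1$ would force $J_2 \subseteq Z$, contradicting Lemma \ref{elemfact2}(i). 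Hence $(d_1, d_2, d_3) \in \{(3,2,2), (2,3,2), (2,2,3)\}$.

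To rule out $(2,2,3)$: here $d_1 = 2$ together with Lemma \ref{elemfact2}(iii) forces $Z_1 \subseteq J_2$, hence $Z_2 = Z_1$; combined with Lemma \ref{elemfact2}(ii) this gives $Z_3 = J_3$, and Lemma \ref{elemfact2}(iii) shows that the image of $Z_2$ in $J_2/J_3$ is one-dimensional, say spanned by $\zeta$. Pick a basis $\{\bar x, \bar y\}$ of $J_1/J_2$. By Lemma \ref{elemfact2}(vi), $x^2, y^2, xy+yx \in Z_2$, so they reduce modulo $J_3$ to $\alpha\zeta, \beta\zeta, \gamma\zeta$ for some scalars. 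By Lemma \ref{kulsconv}, $J_3$ is spanned modulo $J_4$ by the eight length-$3$ monomials in $x, y$; expanding each such monomial using the above relations and $J_1 \cdot J_3 \subseteq J_4$ (for instance $xyx \equiv -yx^2 + \gamma x\zeta \equiv -\alpha y\zeta + \gamma x\zeta \pmod{J_4}$) shows it lies in $k\{x\zeta, y\zeta\}$ modulo $J_4$. Hence $\dim_k J_3/J_4 \le 2$, contradicting $d_3 = 3$.

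For $Z_1 J_1 \subseteq J_3$: if $d_1 = 2$, then as above $Z_1 \subseteq J_2$, so $Z_1 J_1 \subseteq J_2 J_1 = J_3$. If $d_1 = 3$, Lemma \ref{elemfact2}(iii) provides $z \in Z_1 \setminus J_2$ with $Z_1 = kz + Z_2$, and I pick $x, y \in J_1$ so that $\{\bar x, \bar y, \bar z\}$ is a basis of $J_1/J_2$. The image of $Z_2$ in $J_2/J_3$ is again one-dimensional (since $\dim_k Z_2 = 4$ and $Z_3 = J_3$ has dimension $3$); let $\zeta$ span it, and write $xz \equiv \alpha_1 \zeta$, $yz \equiv \beta_1 \zeta \pmod{J_3}$ (these products lie in $Z_2$ by Lemma \ref{elemfact2}(v)). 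Centrality of $z$ gives $zxy = xyz$, and expanding each side modulo $J_4$ yields $\alpha_1 y\zeta \equiv \beta_1 x\zeta \pmod{J_4}$. A case-check of all length-$3$ monomials in $\{x,y,z\}$ — those without $z$ as in the preceding paragraph; those with two or three factors of $z$ vanish since $z^2 \in Z_1^2 \subseteq \Soc(A) = J_4$ and $z^2 \cdot J \subseteq J_5 = 0$; and those with exactly one $z$ reduce via $z \cdot Z_2 \subseteq Z_1^2 \subseteq J_4$ together with the above scalar forms of $xz, yz$ — shows $J_3 \subseteq k\{x\zeta, y\zeta\} + J_4$. Since $d_3 = 2$, the elements $x\zeta$ and $y\zeta$ are therefore linearly independent modulo $J_4$, forcing $\alpha_1 = \beta_1 = 0$. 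Hence $zx, zy \in J_3$, and combined with $Z_2 J_1 \subseteq J_3$ this yields $Z_1 J_1 \subseteq J_3$. The main technical obstacle is the bookkeeping of the length-$3$ monomials; everything else comes straight from the preceding lemmas.
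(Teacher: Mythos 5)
Your proof is correct, and its overall structure matches the paper's: pin down the Loewy sequence via $\dim J_4 = 1$, the lower bounds $d_i \ge 2$ from Lemma \ref{elemfact2} and Lemma \ref{kulsconv2}, eliminate the case $(2,2,3)$, and then handle $Z_1J_1 \subseteq J_3$ separately for $d_1 = 2$ and $d_1 = 3$. Where you diverge is tactical. For the elimination of $(2,2,3)$, the paper first normalises so that $x^2 \notin J_3$ (replacing $x$ by $x+y$ if both squares land in $J_3$), then reads off from Lemma \ref{nailrads} that $\{xy+J_3, x^2+J_3\}$ is a basis of $J_2/J_3$, and gets a two-element spanning set $\{x^2y, x^3\}$ of $J_3/J_4$ from a single application of Lemma \ref{kulsconv}; your explicit expansion of all eight length-$3$ monomials into $k\{x\zeta, y\zeta\}$ reaches the same contradiction with more bookkeeping and no normalisation, and is valid. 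The more substantial difference is in the proof of $Z_1 J_1 \subseteq J_3$ when $d_1 = 3$: the paper argues by contradiction, noting that $xz \notin J_3$ would make $\{xy+J_3, xz+J_3\}$ a basis of $J_2/J_3$, whence Lemma \ref{kulsconv} would give $\{x^2y, x^2z\}$ spanning $J_3/J_4$, and $x^2z \in Z_1^2 = J_4$ would then force $\dim J_3/J_4 \le 1$. You instead show directly that $\{x\zeta+J_4, y\zeta+J_4\}$ is a basis of $J_3/J_4$ by exhausting the length-$3$ monomials, and then extract $\alpha_1 = \beta_1 = 0$ from the commutation identity $zxy = xyz$. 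Both arguments ultimately rest on $Z_1^2 = J_4$ killing the offending products, but your centrality relation is a genuinely different mechanism from the paper's dimension-collapse contradiction, and has the mild advantage of being constructive. The only small point worth making explicit at the end is that $z^2 \in J_4 \subseteq J_3$ and $zJ_2 \subseteq J_1 J_2 = J_3$ automatically, so establishing $zx, zy \in J_3$ really does close out $Z_1 J_1 \subseteq J_3$.
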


\begin{proof}  As  in  the proof of   
Lemma \ref{elemfact2} (ii),   
$$\dim_k(J_3/J_4) \geq 2 , \   \dim_k(J_2/J_3) \geq 2,  \   
\dim_k(J_1/J_2) \geq 2. $$  By Lemma  \ref{elemfact2}, 
$J_4 = \Soc(A) $ is $1$-dimensional.
Thus, we have the following possibilities:
  $$\dim_k(J_1/J_2) = 3, \dim_k(J_2/J_3) =2,   
\dim_k(J_3/J_4) =2,  \dim_k(J_4)=1  $$
$$ \dim_k(J_1/J_2) = 2, \dim_k(J_2/J_3) =3,   
\dim_k(J_3/J_4) =2,  \dim _k(J_4)=1, $$
$$\dim_k(J_1/J_2) = 2, \dim_k(J_2/J_3) =2,   
\dim_k(J_3/J_4) =3,  \dim _k(J_4)=1 . $$
We show that the last is not possible.   Suppose, if possible that 
$J_1/J_2 $ has a basis $\{x +J_2, y+J_2 \}$ and that $\dim_k(J_2/J_3) = 2 $. 
Then $\{xy +J_3, yx+J_3, x^2+J_3, y^2+J_3 \}$ is a spanning set for $J_2/J_3$.
By Lemma \ref{elemfact2} (vi), $ x^2, y^2,  xy+yx  \in Z_2 $  and  by 
Lemma \ref{elemfact2}(i), 
$J_2 \nsubseteq Z_2$.   So, 
$\{xy +yx+J_3, x^2+J_3, y^2+J_3 \}$  spans  $Z_2/J_3 $.   Suppose  first that  
$x^2, y^2 \in   J_3 $.   Since $ Z_2 +J_3 $ has codimension $1$ in $J_2 $, 
$xy +yx \notin J_3 $.  So, replacing $ x$ with $x'=x+y $,  we may assume that 
$x^2 \notin J_3 $ and hence that $\{x^2 +J_3\} $  is a basis of $Z_2/J_3$. 
Thus, by Lemma  \ref{nailrads},   $\{xy+J_3 , x^2+J_3 \}$ is a basis
of $J_2/J_3 $. But then, by Lemma \ref{kulsconv}, $\{x^2y +J_4, x^3+J_4\} $ 
is a   spanning set  of $J_3/J_4 $, a contradiction. This proves the 
first assertion.

If   $\dim_k(J_1/J_2) =2 $, then  $Z_1 \subseteq J_2 $, 
and the second statement is trivial. Thus,  we may assume that  
$\dim_k (J_1/J_2) =3 $ and hence by the first assertion that  
$\dim_k(J_2/J_3) =2 $.
Let    $\{x +   J_2, y +J_2, z+J_2\}$ be a basis  of  $J_1/J_2 $ 
with $z \in  Z$  and suppose  if possible that  $Z_1J_1 \nsubseteq  J_3 $.   
Since  by Lemma \ref{elemfact2} (iii),  $Z_2$ has codimension $1$  in $J_2$, 
$\{x +  Z_1+ J_2 , y+Z_1+J_2 \} $ is a basis of $J_1/(Z_1+J_2)$.  By  
Lemma \ref{elemfact2}(viii), (ix), $z^2 \in \Soc(Z(A))=J_4 $,  
hence either 
$xz \notin J_3 $ or  $yz \notin  J_3 $, say $xz \notin J_3 $.   By Lemma \ref{elemfact2}(v), 
$xz \in  Z_2$ and  by   Lemma \ref{nailrads}, $\{xy + Z_2 \} $ is a basis of $ J_2/Z_2 $.
Thus,  $\{xy + J_3,  xz +J_3 \} $   is a basis of $J_2/J_3$. 
By Lemma \ref{kulsconv}, $\{x^2y + J_4,  x^2z +J_4 \} $
spans  $J_3/J_4$.  But by Lemma \ref{elemfact2}(vi), $x^2 \in Z_1$ whence by 
Lemma  \ref{elemfact2}(vii), (ix)
$$ x^2z\in Z_1^2 =\Soc(A) =J_4. $$
So $(\dim_k J_3/J_4 ) \leq 1 $, a contradiction. 
\end{proof}

\section{  The case  $\dim_k J_1/J_2 =3 $. }

In this section, we will work under  the following hypothesis.

\begin{Hypothesis} \label{Ahyp3} $A$ is symmetric,  local, 
$\dim_k (A)  = 9 $,  $Z(A) \cong Z(B)$ as $k$-algebras and 
 $\dim_k (J_1/J_2) =3 $.
\end{Hypothesis}

With the above hypothesis, by Lemma \ref{elemfact2}
$J_1/J_2 $ has a basis  of the form $\{ x+J_2, y+J_2, z + J_2  \} $,
 with $ z\in Z_1$.

\begin{Lemma} \label{3quant}
Suppose $A$ satisfies  Hypothesis \ref{Ahyp3}.
There exists a   basis  $\{ x+J_2, y+J_2, z + J_2  \} $  of 
$J_2/J_3$  such that  $z \in Z$,  $x^2\notin J_3 $, and 
$ xy +yx =0 $.
\end{Lemma}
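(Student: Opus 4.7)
The plan is to refine the basis in three stages, starting from the existence of a basis $\{\bar x, \bar y, \bar z\}$ of $J_1/J_2$ with $z \in Z_1$, which follows from Lemma \ref{elemfact2}(iii) since $(Z_1+J_2)/J_2$ is one-dimensional.

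\emph{Stage 1: Arrange $x^2 \notin J_3$.} By Lemma \ref{elemfact2}(v),(vi),(vii),(ix), each of $x^2, y^2, xy+yx, xz, yz$ lies in $Z_2$, while $z^2 \in Z_1^2 = \Soc(A) \subseteq J_3$. Since $Z_2/J_3$ is one-dimensional and nonzero (Lemma \ref{elemfact2}(iii)), at least one of these elements lies outside $J_3$. Using the identities $(x+y)^2 = x^2 + y^2 + (xy+yx)$ and $(x+z)^2 = x^2 + 2xz + z^2$ (where $2 \neq 0$ since $\mathrm{char}(k) = 3$), a suitable linear combination of $\{x, y, z\}$ has square outside $J_3$, so after relabelling we may assume $x^2 \notin J_3$.

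\emph{Stage 2: Force $xy + yx = 0$.} Set $c := xy + yx \in Z_2 = k x^2 + J_3$ (using Stage 1) and write $c = \alpha x^2 + r$ with $r \in J_3$. Replacing $y$ by $y + \alpha x$ gives the new value $c + 2\alpha x^2 = 3\alpha x^2 + r = r \in J_3$, using characteristic $3$. Next, further modify $y$ (and, if necessary, $x$) by elements of $J_2$: such modifications preserve the basis and the condition $x^2 \notin J_3$. The identities $x^2 y + xyx = xc$ and $xy^2 + yxy = cy$, which follow from $x^2, y^2, c \in Z$ and $xy + yx = c$, together with $c \in J_3$, yield $xc, cy \in J_4$. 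Combining these with Lemma \ref{kulsconv} applied to the spanning set $\{xy + J_3, x^2 + J_3\}$ of $J_2/J_3$, and using centrality of $y^2$ to express $yxy$ in terms of $x^3$ modulo $J_4$, one obtains that $J_3/J_4$ is spanned by $\{x^3, x^2 y\}$ and that $J_2$-corrections to $x, y$ change $c$ modulo $J_4$ by an arbitrary linear combination of $x^3$ and $x^2 y$; consequently $c$ can be driven into $J_4 = \Soc(A)$. The final step uses $J_4 = J_1 \cdot J_3 = xJ_3 + yJ_3 + zJ_3$: since $J_4 \neq 0$, at least one summand equals $J_4$, and a central $u \in J_3$ can be chosen so that adding $u$ to $y$ or $x$ kills $c$ exactly, using that $J_5 = 0$.

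The main obstacle is the $J_3 \to J_4$ step: confirming that $c \pmod{J_4}$ lies in the subspace of $J_3/J_4$ reachable by $J_2$-corrections to $x$ and $y$. This relies on the careful identification of $J_3/J_4$ via Lemma \ref{kulsconv} and on the centrality constraints $xc, yc \in J_4$, which pin down $c \pmod{J_4}$ tightly enough for the available corrections to suffice; the other two stages are routine once the structural results of Lemmas \ref{elemfact2} and \ref{nailrads} are in hand.
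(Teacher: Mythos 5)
Your proposal follows essentially the same strategy as the paper: successively reduce $c = xy+yx$ modulo $J_3$, then modulo $J_4$, then kill the residual term in $J_4$ using $J_5=0$; Stage~1 and the first two reductions in Stage~2 coincide with the paper's argument (the paper carries out the $J_3\to J_4$ step with the specific corrections $x\mapsto x-\tfrac12\beta x^2$, $y\mapsto y-\tfrac12\alpha x^2$, whereas you argue more abstractly that $J_2$-corrections reach an arbitrary element of $J_3/J_4$; both work, and a short calculation shows the $xy$-components of such corrections contribute nothing modulo $J_4$, so in effect these are the same corrections). A small inaccuracy in Stage~1: $xz,yz$ already lie in $J_3$ by Proposition \ref{firstreduction}, so listing them as candidates is vacuous, but the argument goes through because $x^2, y^2, xy+yx$ do span $Z_2/J_3$.

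There is, however, a genuine gap in your final step. From $J_4 = xJ_3 + yJ_3 + zJ_3 \neq 0$ you infer that ``at least one summand equals $J_4$,'' but this does not by itself justify the modification: if it were $zJ_3$ that equals $J_4$ while $xJ_3 = yJ_3 = 0$, then no central $u\in J_3$ added to $x$ or $y$ could change $c$, since the perturbation of $c$ lives in $xJ_3 + yJ_3$. You need the stronger statement $xJ_3 + yJ_3 = J_4$, which does hold: Lemma \ref{kulsconv} applied to the spanning set $\{x^2y+J_4, x^3+J_4\}$ of $J_3/J_4$ gives that $\{x^3y, x^4\}$ spans $J_4$, and $x^4 \in xJ_3$ while $x^3y = yx^3 \in yJ_3$ (as $x^3\in Z$). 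With this added, your argument closes; this is what the paper does, just more explicitly, by writing $xy+yx$ as a multiple of $x^3y$ or of $x^4$ and substituting $x\mapsto x-\tfrac12\alpha x^3$ or $y\mapsto y-\tfrac12\alpha x^3$ accordingly.
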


 \begin{proof}   Let   $\{ x+J_2, y+J_2, z + J_2  \} $   be a basis of 
$J_2/J_3$   with  $z \in Z$.  By Proposition 
\ref{firstreduction}, $xz, zx \in J_3 $, so 
$\{ x^2+J_3, y^2+J_3, xy +J_3, yx+J_3 \}$  spans $J_2/J_3 $.   
By Lemma \ref{elemfact2} (vi), $ x^2, y^2, xy +yx \in Z_2$ and by Lemma  
\ref{elemfact2} (i), $J_2 \nsubseteq Z_2$.
Thus, $ \{x^2+J_3, y^2+J_3, xy +yx +J_3 \} $ spans  $Z_2/J_3 $.
If either $x^2 \notin J_3 $ or $y^2 \notin  J_3$, then interchanging
$x$ and $y$ if necessary, we have $x^2 \not\in J_3$. 
If both  $ x^2, y^2 \in J_3 $   then  
$xy +yx \notin J_3 $.  So, 
$$ (x+y)^2  = x^2 + y^2 + (xy +yx )  \not\in J_3 , $$
and replacing $x$ with $x+y  $  we may assume that $x^2 \notin J_3$.    Thus, since 
$ J_2  \nsubseteq Z_2$ and $\dim_k(J_2/J_3)=2 $, $\{x^2 +J_3 \} $ is a basis of 
$Z_2/J_3 $.

Since  $xy +yx \in Z_2 $, there exists  an $\alpha \in k $ 
such that
$$  xy + yx \equiv  \alpha x^2  \  \    \mod  J_3 $$ 
Set $ y'= y -\frac{1}{2} \alpha x  $. Then,
$$ xy' + y'x   \equiv 0 \   \  \mod   J_3 .$$  
Since  $\{ x+J_2,  y'+J_2, z + J_2  \} $ is  also  a basis of   $J_1/J_2 $, 
by replacing $y$ be $y'$, we may assume that $xy +yx \in J_3 $.

Since $\{x^2+J_3\} $ is a basis  of $Z_2/J_3$,  
by  Lemma \ref{nailrads},  $\{xy + J_3, x^2 +J_3\} $ is a basis  of $J_2/J_3 $, 
hence   by  Lemma \ref{kulsconv}, 
$\{ x^2y , x^3\} $ is a basis of $J_3/J_4$.
So, 
$$ xy +yx  \equiv \alpha x^3 + \beta  x^2 y  \ \mod  \  J_4 $$ 
for some $\alpha, \beta \in k $.

Set $ x'= x - \frac{1}{2} \beta x^2 $ and  $ y'= y- \frac{1}{2} \alpha x^2 $.
Then $\{ x'+J_2, y'+J_2, z + J_2  \} $  is   still a  basis of $ J_1/J_2 $, 
$ z \in Z$ , $x'^2 \notin  J_3 $ and    
 \begin{eqnarray*} x'y' + y'x'  &\equiv & (x - \frac{1}{2} \beta x^2)
( y- \frac{1}{2} \alpha x^2) +
( y- \frac{1}{2} \alpha x^2)  (x - \frac{1}{2} \beta x^2)  \\
 &\equiv &  xy +yx    - \alpha x^3  - \beta  x^2y  \\
&\equiv &  0 \  \mod \  J_4 
\end{eqnarray*}
Hence replacing $x$ with $x'$ and $y$ with $y'$, we may assume that 
$xy  +yx \in J_4 $.
Arguing as above, again   $\{ x^2y , x^3\} $ is a basis of $J_3/J_4$ and 
by applying  Lemma \ref{kulsconv} again,    $\{ x^3y , x^4\} $ spans $J_4$.
Since $\dim_k (J_4) =1 $, either $xy +yx = \alpha x^3y $ or
$xy +yx = \alpha x^4 $   for some  $\alpha \in k $.
Replacing $x$  by  $x - \frac{1}{2}\alpha x^3$  in the first case    
and  $ y$ by   $y -  \frac{1}{2}\alpha x^3 $  in the second case yields the 
result.
\end{proof}

\begin{Lemma} \label{3afterquant}  Suppose $A$ satisfies  
Hypothesis \ref{Ahyp3} and let  $\{ x+J_2, y+J_2, z + J_2  \} $   be a basis
$J_2/J_3$  such that  $z \in Z$, $x^2  \notin J_3$ and  
$ xy +yx =0 $. Then,

(i) For any $u \in Z_1$,  $uxy=xyu=0 $. In particular, 
$$ x^3y=yx^3=xy^3=yx^3= 0 . $$ 

(ii) $\{ xy +J_3, x^2+ J_3  \} $  is  a  basis of $ J_2/J_3 $,
$\{x^2y +J_4, x^3+ J_4 \} $  is  a  basis of $ J_3/J_4 $ and  
$\{ x^4 \} $ is  a  basis of $ J_4 $.

(iii)  $y^2 \not\in J_3 $.

\end{Lemma}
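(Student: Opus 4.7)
\medskip\noindent\textbf{Proof plan.} I would prove the three parts in order: (i) is the crux and supplies the vanishing identities needed to carry through (ii), while (iii) follows from a short contradiction argument using the commutator basis of $[A,A]$. The central difficulty is engineering the identity in (i), where one must arrange for both $u$ and $uy$ to be central and then use characteristic $3$ to extract $uxy=0$ from $2uxy=0$. Once (i) is available, (ii) is a mechanical iteration of Lemma \ref{kulsconv}, and (iii) drops out of a single dimension-count argument.

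For part (i), the key observation is that for any $u\in Z_1$, not only is $u$ central but so is $uy$: by Proposition \ref{firstreduction}, $uy\in Z_1J_1\subseteq J_3$, and $J_3\subseteq Z$ by Lemma \ref{elemfact2}(ii). Multiplying the relation $xy+yx=0$ on the left by $u$ and then chaining the two centralities gives
$$uxy \;=\; -u(yx) \;=\; -(uy)x \;=\; -x(uy) \;=\; -(xu)y \;=\; -uxy,$$
so $2uxy=0$, and hence $uxy=0$ since $\operatorname{char}(k)=3$. Centrality of $u$ then yields $xyu=uxy=0$ as well. The listed particular cases drop out by taking $u=x^2\in Z_2\subseteq Z_1$ (for $x^3y$ and $yx^3$, also using centrality of $x^2$ to reorder $yx^3=-(xy)x^2=-x^2(xy)$) and $u=y^2$ (for $xy^3$ and $y^3x$).

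For part (ii), the basis statement for $J_2/J_3$ is immediate from Lemma \ref{nailrads} combined with the observation that $\{x^2+J_3\}$ is a basis of $Z_2/J_3$, since $x^2\in Z_2\setminus J_3$ and $\dim_k(Z_2/J_3)=1$ by Lemma \ref{elemfact2}(iii). I would then apply Lemma \ref{kulsconv} to $\{xy,x^2\}$ and reduce the resulting products using $yx=-xy$ together with the decomposition $y^2=\alpha x^2+j$ with $j\in J_3$ (possible since $y^2\in Z_2$), which yields $xy^2\equiv\alpha x^3\pmod{J_4}$; dimensions then force $\{x^2y+J_4,\,x^3+J_4\}$ to be a basis of $J_3/J_4$. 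A second application of Lemma \ref{kulsconv} to this basis gives candidate products of length four; part (i) kills $x^3y$, and $x^2y^2=\alpha x^4+x^2j\in kx^4$ because $x^2j\in J_5=0$ by Lemma \ref{elemfact2}(ix). Dimension then forces $\{x^4\}$ to be the basis of $J_4$.

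For part (iii), Lemma \ref{nailrads} gives that $\{[x,y],\,[x,xy],\,[y,xy]\}$ is a basis of $[A,A]$. A quick calculation using $yx=-xy$ yields $[y,xy]=yxy-xy^2=-2xy^2$, so $xy^2\neq 0$. If one supposed $y^2\in J_3$, then $xy^2\in J_1\cdot J_3\subseteq J_4=\Soc(A)$ (by Lemma \ref{elemfact2}(vii),(ix)), and combined with $xy^2\in[A,A]$, Lemma \ref{locsymgen}(iii) would force $xy^2=0$, a contradiction. Hence $y^2\notin J_3$.
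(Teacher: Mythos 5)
Your proposal is correct and follows essentially the same route as the paper: the commutator‑cycling trick (using that both $u$ and $uy$ are central and that $2$ is invertible in characteristic $3$) for part (i), iteration of Lemma \ref{kulsconv} together with the dimension data from Proposition \ref{firstreduction} for part (ii), and the commutator basis of $[A,A]$ from Lemma \ref{nailrads} for part (iii). The only cosmetic differences are that you invoke Proposition \ref{firstreduction} and Lemma \ref{elemfact2}(ii) where the paper cites Lemma \ref{elemfact2}(v) to get $uy\in Z$, and in part (ii) you reduce a few extra products (such as $xy^2$ and $x^2y^2$) that the precise formulation of Lemma \ref{kulsconv} already excludes from the spanning set, though this does no harm.
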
 

\begin{proof}  (i) Let $u\in Z_1$. Since $ xy =-yx $, and since by 
Lemma \ref{elemfact2} (v), $ uy \in Z$, we have
$$ uxy= -uyx = -x(uy) = -x(yu)=  -uxy. $$  Thus, $uxy=0 $.  The second assertion follows from the first as by Lemma \ref{elemfact2} (vi),
$x^2, y^2 \in Z_2$.

(ii) The first assertion was   proved in the course  of the proof  of  
Lemma \ref{3quant}.  It follows from this and  Lemma \ref{kulsconv} that  $J_3/J_4$ is spanned by 
$\{x^2y +J_4, x^3 +J_4 \}$ and 
$J_4 $ is spanned by $\{x^3y, x^4 \}$. But by (i),  $x^3y =0 $. Hence 
$\{ x^4 \} $ is  a  basis of $ J_4 $.

(iii) By Lemma \ref{nailrads},  $\{[x,y], [x,xy], [y,xy] \}$ is a basis of 
$[A, A]$.   Since $xy=-yx$,  $[y,[xy]]=2xy^2$. If $y^2\in J_3 $, then   
$$[y,[xy]]=2xy^2 \in J_4 = \Soc(A),$$
a contradiction.
\end{proof}

\begin{Lemma} \label{3nailstr} Suppose $A$ satisfies  
Hypothesis \ref{Ahyp3}. Then there exists a basis 
$\{ x+J_2, y+J_2, z + J_2  \} $   of  $J_2/J_3$  such that    $z \in Z$, 
$\{x^2+J_3\} $ is  a basis of 
$Z_2/J_3$,   $ xy +yx =0 $  and such that  the following holds.

(i) $ zx=xz= 0 $.

(ii) $z^2= x^4 $.

(iii) $zy=yz=0 $.

(iv)  $y^2 =  x^2 +\alpha x^3 + \beta x^2y $ for some $\alpha, \beta  \in k$.
\end{Lemma}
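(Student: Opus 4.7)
The plan is to modify the basis from Lemma \ref{3quant} in a carefully ordered sequence of stages, using throughout the facts $Z_1 J_1 \subseteq J_3$ (Proposition \ref{firstreduction}), $Z_1^2 = \Soc(A) = kx^4$ (Lemma \ref{elemfact2}(vii),(ix)), and $J_3 = \Soc(Z(A))$ (a dimension count from the explicit bases, using Lemma \ref{elemfact2}(viii)). I first normalize $y$: since $\{x^2 + J_3\}$ is a basis of $Z_2/J_3$ (Lemma \ref{elemfact2}(iii)) and $y^2 \in Z_2 \setminus J_3$ (Lemma \ref{3afterquant}(iii)), we have $y^2 = \lambda x^2 + j_3$ with $\lambda \in k^\times$; replacing $y$ by $y/\sqrt{\lambda}$ (since $k$ is algebraically closed) arranges $y^2 \equiv x^2 \pmod{J_3}$ and preserves $xy + yx = 0$. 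As $x^2 \cdot J_3 \subseteq J_5 = 0$, we then have $x^2 y^2 = x^4$.

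For (i) and (iii): Write $zx = \alpha x^3 + \beta x^2 y + \gamma x^4$ (using Lemma \ref{3afterquant}(ii)). Replacing $z$ by $z - \alpha x^2 - \gamma x^3 \in z + Z_2$ (keeping $z$ central and its class in $J_1/J_2$) reduces $zx$ to $\beta x^2 y$. The crux is showing $\beta = 0$: write also $zy = \alpha' x^3 + \beta' x^2 y + \gamma' x^4$, and compute $z(xy)$ in two ways --- as $(zx)y = \beta x^2 y^2 = \beta x^4$, and, by centrality of $z$, as $x(zy) = \alpha' x^4$ (using $x^3 y = 0$ from Lemma \ref{3afterquant}(i)). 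Hence $\alpha' = \beta$. Similarly, $z(yx) = -z(xy) = -\beta x^4$ via $xy + yx = 0$, while $z(yx) = (zy)x = \alpha' x^4$; so $\alpha' = -\beta$. Combining, $2\beta = 0$, so $\beta = 0$ since $\operatorname{char} k = 3$, giving $zx = 0$ and $\alpha' = 0$. Now modify $z$ by $-\gamma' x^2 y \in \Soc(Z(A)) \subseteq Z_2$ (which preserves $zx = 0$) to bring $zy$ to $\beta' x^2 y$. Finally, $zy^2 = 0$ directly (since $zx^2 = 0$ and $z \cdot J_3 = 0$ by $J_3 = \Soc(Z(A))$), while $zy^2 = (zy)y = \beta' x^4$; so $\beta' = 0$ and $zy = 0$.

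For (ii), I argue $z^2 \neq 0$ by contradiction: if $z^2 = 0$ then $z \cdot J = 0$ (a direct check using $zx = zy = 0$, $z \cdot J_2 = 0$, and $z \cdot J_3 = 0$), hence $z \in \Soc(A) \subseteq J_2$, contradicting that $z + J_2$ is a basis element of $J_1/J_2$. Since $z^2 \in Z_1^2 = kx^4$, writing $z^2 = cx^4$ with $c \neq 0$, rescaling $z \to z/\sqrt{c}$ yields $z^2 = x^4$ while preserving $zx = zy = 0$. For (iv), write $y^2 = x^2 + a x^3 + b x^2 y + c x^4$ and replace $y$ by $y + \tau xy$ with $\tau^2 = c$: a short computation using $y(xy) + (xy)y = 0$ and $(xy)^2 = -x^2 y^2 = -x^4$ yields $(y + \tau xy)^2 = y^2 - \tau^2 x^4 = x^2 + a x^3 + b x^2 y$. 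This modification preserves $xy + yx = 0$ (direct check) and $zy = 0$ (as $z(xy) = (zx)y = 0$), completing the proof. The main obstacle throughout is coordinating the modifications so that earlier properties are preserved; the heart of the argument is the characteristic-$3$ identity $2\beta = 0 \Rightarrow \beta = 0$ arising from the double computation of $z(xy)$ and $z(yx)$.
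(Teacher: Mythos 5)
Your proof is correct and follows essentially the same strategy as the paper's: a sequence of linear substitutions on the generating set $\{x,y,z\}$ that eliminates the unwanted terms one by one, starting from Lemma \ref{3quant} and using $Z_1J_1\subseteq J_3$, the description of $J_3/J_4$ and $J_4$, and $Z_1^2=\Soc(A)$. The divergences are minor and cosmetic: you establish (iii) before (ii), absorbing the $\gamma'x^4$ term into $z$ rather than (as the paper does) into $y$; and your double computation of $z(xy)$ and $z(yx)$ giving $2\beta=0$ is in effect a re-derivation, in the special case $u=z$, of the identity $u\,xy=0$ for $u\in Z_1$, which is Lemma \ref{3afterquant}(i) and is what the paper invokes directly to kill $\beta$. (That identity's proof in the paper is the same two-sided trick: $uxy=-uyx=-x(uy)=-uxy$, so $2\,uxy=0$.) One small quibble: you present the step $2\beta=0\Rightarrow\beta=0$ as the ``characteristic-$3$'' heart of the argument, but it only uses $2\ne 0$ in $k$, i.e.\ that the characteristic is odd; the same remark applies to the paper's Lemma \ref{3afterquant}(i). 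Also, while your appeal to $J_3=\Soc(Z(A))$ to get $zJ_3=0$ is correct, it is not needed: once $zx=0$ is in hand, $zJ_3=0$ follows immediately from $zJ_1\subseteq J_3$ and $J_3J_2=J_5=0$, which is the route the paper takes.
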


\begin{proof}   Let   $\{ x+J_2, y+J_2, z + J_2  \} $   be a basis
$J_2/J_3$  such that  $z \in Z$, $\{x^2+J_3\} $ is  a basis of $Z_2/J_3$ and  
$ xy +yx =0 $.  Such  a   basis exists  by Lemma \ref{3quant}.

By Lemma  \ref{elemfact2}, $y^2 \in Z_2 $,  and by Lemma \ref{3afterquant}, $y^2 \notin J_3$,  so
$y^2 \cong \lambda x^2 \ \mod  \  J_3 $, for some $0\ne \lambda \in k$. 
Replacing  $y$ by  $\sqrt \lambda ^{-1}y$    does not affect  the relation 
$xy +yx =0 $, hence     we may assume  that $y^2 \cong x^2 \  \mod \ J_3 $. 
In particular,  $x^2y^2  = x^ 4 $. 

By   Proposition \ref{firstreduction}, $zx  \in J_3 $.
So, by   Lemma \ref{3afterquant},
 $ zx =\alpha x^3 + \beta x^2 y  + \gamma x^4 $  with 
$\alpha, \beta, \gamma \in k$.
By Lemma \ref{3afterquant}  (i), and by (i),
$$ 0 = zxy =  \alpha x^3y + \beta x^2 y^2  = \beta x^2 y^2  =  \beta x^4.$$
So, $\beta = 0 $.   Since  $ z':= z-\alpha x^2 -\gamma x^3 $   is an element 
of $Z$ and $z' \equiv z \ \mod \ J_2$,   
replacing $z$ by $z'$,  we may assume  that  $zx=xz=0$ and (i)  holds.

Since $\{x^2, z, x^2y, x^3, x^4 \}  \subseteq  Z_1$  and $\dim_k(Z_1) = 5$,
$\{x^2, z, x^2y, x^3, x^4 \} $ is a  basis of $Z_1$  by 
Lemma \ref{3afterquant}.     Now 
$ \{  x^2y, x^3, x^4 \} \subseteq    \Soc(Z(A)) $  and $\dim_k(\Soc(Z(A))=3 $, so 
$ z \notin        \Soc(Z(A))$.   Since  $zx^2=0 $, this means that  
$z^2 \ne 0 $. 
On the other hand,  by Lemma \ref{elemfact2} (vii),  $z^2 \in \Soc(Z(A))=\Soc(A)$. Thus, 
$ z^2 =\delta x^4 $, with $0\ne \delta \in k   $.   Replacing $z$ 
with a constant multiple  does not  affect  any of the  already established 
properties. Hence,
we may assume that  $ z^2 = x^4 $  and (ii) holds.

We now  show  (iii).   Again by Proposition \ref{firstreduction},   
$zy \in J_3$,  so  by Lemma \ref{3afterquant}, 
$$ zy  = \alpha x^3 + \beta x^2 y  + \delta x^4 , $$ with  
$\alpha, \beta, \delta \in k$.  By Lemma 
\ref{3afterquant},  
$$ 0 = zyx =  \alpha x^4, $$
whence $\alpha =0 $.  Since $zJ \subseteq  J_3$,  $ zJ_3=0 $,  and we have shown above that  $y^2 \cong x^2 \ \mod \ J_3 $.
Hence  by (i),
$$0= zx^2 = zy^2   =  \beta x^2 y ^2 = \beta x^4 . $$
But then $ \beta =0 $ and $ zy  = \delta  x^4 $.   Set
$ y'= y -  \delta z $.  Then  by (i) 
$$ y'x = yx =  -xy = -xy', $$  by (i), (ii)  
and Proposition \ref{firstreduction}, 
$${y'}^2    \equiv y^2 + \delta ^2 z^2  -2\delta yz 
\equiv  y ^2 \equiv  x^2 \mod \ J_3,$$ 
and by (ii),
$$ y'z = yz -\delta z^2 =  0. $$    Replacing $y$ with $y'$  
gives (ii).

Since $ y^2 \equiv  x^2  \ \mod  J_3 $, 
$ y^2 =  x^2 + \alpha  x^3 +\beta x^2y + \gamma x^4 $ for some 
$\alpha, \beta, \gamma \in  k$.

Set $ y'= y +\sqrt\gamma xy $.  Then $y'x +xy' =0 $, $y'z=zy' =0 $ and by 
Lemma \ref{3afterquant},  
$$ y'^2 = y^2   -\gamma x^2y^2 = x^2  +\alpha x^3 +\beta x^2y =  
x^2 +\alpha x^3 +\beta x^2y' . $$
So, replacing $y$ by $y'$ yields (iv).
\end{proof}

\begin{Proposition} \label{3nailstrcha}  Suppose $A$ satisfies  
Hypothesis \ref{Ahyp3}. Then there exists a basis 
$\{ x+J_2, y+J_2, z + J_2  \} $   of  $J_1/J_2$  such that    
$\{ xy+J_3, yx+J_3 \} $  is a basis of  of $J_2/J_3$, $\{ xyx+J_3, yxy+J_3 \} $ 
is a basis  of  $J_3/J_4 $,  $\{ xyxy \}$ is a basis of  $J_4 $ and such that 
the   following   relations hold.

(i)  $ zx=xz= zy=yz=0 $.

(ii) $z^2= xyxy $.

(iii) $y^2= x^2 =  \alpha xyx  + \beta yxy $,  $\alpha, \beta  \in k$.

(iv) $xyxy=yxyx$, 

(v)  For any  $u_i \in \{x, y, z \} $, $1\leq i \leq 5 $, $u_1u_2u_3u_4u_5=0 $.

Moreover, the above is a complete set of generators and relations for the 
algebra  $A$, that is  $A \cong  k\langle x, y, z \rangle/I $, where 
$I$ is the ideal of $k\langle x, y, z \rangle $,   generated by: 

$ \{ zx, xz, zy, yz,   z^2 -  xyxy,  y^2-x^2,  
y^2-\alpha xyx +\beta yxy \} \cup \{u_1u_2u_3u_4u_5, \ u_i \in \{x, y, z\}, 
1\leq i \leq 5 \}, $   $\alpha, \beta   \in k $.
\end{Proposition}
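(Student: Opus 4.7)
The plan is to arrive at the generators $x,y,z$ by a linear change of coordinates applied to the basis $\{X,Y,Z\}$ of $J_1/J_2$ supplied by Lemma~\ref{3nailstr}, and then to arrange the exact form of relation (iii) by a further adjustment using the central element~$z$.

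Concretely, recall that the basis from Lemma~\ref{3nailstr} satisfies $XY+YX=0$, $ZX=XZ=ZY=YZ=0$, $Z^2=X^4$ and $Y^2=X^2+\alpha_0 X^3+\beta_0 X^2 Y$ for some $\alpha_0,\beta_0\in k$. Pick $i\in k$ with $i^2=-1$ (available because $k$ is algebraically closed, although $i\notin\F_3$) and set $x_0=X+iY$, $y_0=X-iY$, $z_0=iZ$. The change-of-basis matrix has determinant~$2$, nonzero in characteristic~$3$, so $\{x_0,y_0,z_0\}$ still descends to a basis of $J_1/J_2$. Using $XY=-YX$, $Y^2\equiv X^2\pmod{J_3}$ and the vanishings $X^3Y=YX^3=0$ established in Lemma~\ref{3afterquant}, a direct computation shows that $x_0^2=y_0^2=-(\alpha_0 X^3+\beta_0 X^2 Y)\in J_3$, that $\{x_0y_0+J_3,\, y_0x_0+J_3\}$ and $\{x_0y_0x_0+J_4,\, y_0x_0y_0+J_4\}$ are bases of $J_2/J_3$ and $J_3/J_4$ respectively, and that $x_0y_0x_0y_0=-X^4$ spans $J_4$. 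The $i\leftrightarrow -i$ symmetry then yields (iv), while relations (i), (ii) and (v) are immediate from the corresponding properties of $X,Y,Z$ together with $J^5=0$ (Lemma~\ref{elemfact2}(ix)).

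The main subtlety, and the expected obstacle, is to promote relation (iii) from a congruence modulo $J_4$ to an equality, with a single pair $(\alpha,\beta)$ working for both $x^2$ and $y^2$. Inverting the congruences of the previous step to express $X^3,X^2Y$ modulo $J_4$ in terms of $x_0y_0x_0,y_0x_0y_0$ yields scalars $\alpha,\beta\in k$ with
\[
x_0^2-\alpha\, x_0y_0x_0-\beta\, y_0x_0y_0 \ =\ \gamma\, z_0^2 \ =\ y_0^2-\alpha\, x_0y_0x_0-\beta\, y_0x_0y_0
\]
for some single $\gamma\in k$, the second equality holding because $x_0^2=y_0^2$ is already exact and $J_4=\Soc(A)=k\, z_0^2$. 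Choosing $\mu\in k$ with $\mu^2=-\gamma$ and replacing $x_0,y_0$ by $x:=x_0+\mu z_0$, $y:=y_0+\mu z_0$ (keeping $z:=z_0$) leaves every product of three or more of the generators unchanged, since $z_0$ annihilates $x_0$ and $y_0$ by the freshly established (i) and $z_0^2\in\Soc(A)$ annihilates $J_1$; on the other hand $x^2=x_0^2+\mu^2 z_0^2=\alpha xyx+\beta yxy$ holds on the nose, and symmetrically for $y^2$.

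To conclude, I would verify completeness of the presentation by showing that the relations (i)--(v) reduce every monomial in $x,y,z$ to a $k$-linear combination of the nine elements $1,x,y,z,xy,yx,xyx,yxy,xyxy$: any monomial containing $z$ collapses via (i) and the consequence $z^3=0$ (since $z\cdot xyxy=(zx)yxy=0$) to one of $z$ or $z^2=xyxy$; any monomial of length at least~$5$ in $x,y$ vanishes by (v); and any shorter word containing $x^2$ or $y^2$ is rewritten using (iii), the resulting length-$4$ terms then reducing via (iv) and (v) to $xyxy$ or $0$. Consequently $\dim_k k\langle x,y,z\rangle/I \leq 9 = \dim_k A$, and the canonical surjection $k\langle x,y,z\rangle/I\twoheadrightarrow A$ (which exists because $A$ satisfies all the listed relations) is an isomorphism.
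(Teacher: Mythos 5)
Your plan parallels the paper's for most of the proof: both set $x_0 = X+iY$, $y_0 = X-iY$, $z_0$ a multiple of $Z$ (with $i^2=-1$), observe $x_0^2 = y_0^2 \in J_3$, and verify the basis claims and relations (i), (ii), (iv), (v). The genuine gap is in the final step, where you upgrade (iii) from a congruence modulo $J_4$ to an exact equality. You write $x_0^2 = \alpha\, x_0y_0x_0 + \beta\, y_0x_0y_0 + \gamma z_0^2$, choose $\mu$ with $\mu^2 = -\gamma$, and replace $x_0$ by $x := x_0 + \mu z_0$, $y_0$ by $y := y_0 + \mu z_0$, claiming that products of three or more generators are unchanged and that $x^2, y^2$ now satisfy (iii) exactly. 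The problem is that this substitution destroys relation (i): with $z := z_0$ one has
\[
zx \ =\ z_0(x_0 + \mu z_0)\ =\ z_0 x_0 + \mu z_0^2 \ =\ \mu z_0^2,
\]
which is a nonzero element of $J_4$ whenever $\mu \neq 0$ (i.e.\ whenever $\gamma \neq 0$), and similarly $xz = zy = yz = \mu z_0^2 \neq 0$. You checked that products of three or more generators are preserved, but (i) is a statement about products of two, and it fails. The paper avoids this by adjusting with a $J_3$-element instead: it replaces $x'$ by $x' - \tfrac{\delta}{2}\, y'x'y'$ and $y'$ by $y' - \tfrac{\delta}{2}\, x'y'x'$. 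These correction terms begin with $y'$ (resp.\ $x'$) and end with $y'$ (resp.\ $x'$), so they are annihilated by $z'$ on both sides, and they lie in $J_3$ so they do not disturb the quadratic parts that feed into the basis claims. Your argument can be repaired by making that substitution instead; as written, the step does not go through.
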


\begin{proof}  Let $\{x+J_2, y+J_2, z+J_2 \}$ be a basis    of $J_1/J_2 $ 
satisfying the conditions of Lemma \ref{3nailstr}.  Let $i$ be a primitive
$4$-th root of unity in     $k$ and set 
$ x' =x +iy  $, $ y'=x-iy $, $z' = z $.    Then $ x'^2=y'^2 \in J_3 $ and 
$x'z'=z'x'=z'y'=y'z'=0 $.   A  monomial of   two terms  in 
$x', y',  z'$  which involves  any of  
$x'^2 $, $y'^2 $ or $z'$ is in $J_3 $, hence $\{x'y'+ J_3, y'x' +J_3 \} $ spans 
$J_2/J_3 $ and  is therefore a basis of $J_2/J_3$.  Similarly, a  monomial of 
three  terms  in 
$x', y',  z'$ which  involves  any of  
$x'^2 $, $y'^2 $ or $z'$  is in $J_4 $, hence $\{ x'y'x' +J_4, y'x'y' +J_4\} $
is a basis of $J_3/J_4 $ and  $J_4 $ is    spanned by 
$x'y'x'y' =y'x'y'x'$.  Write
$$ x'^2= y'^2=  \alpha'x'y'x' + \beta' y'x' z'  + \delta x'y'x'y'. $$
Replacing  
$x' $ by $x'- \frac{\delta}{2} y'x'y'$ and $y' $ by 
$y'- \frac{\delta}{2} x'y'x' $  yields 
$$ x'^2= y'^2=  \alpha'x'y'x' + \beta' y'x' y'. $$ 
Since $z'^2 $ is a non-zero element of   $ J_4 $,  replacing 
$z' $ by  $\delta z' $ for  a  suitable $\delta $, we may assume that 
$z'^2=  x'y'x'y'$.    Thus, $\{x', y' ,z'\}$ satisfy the relations (i) -(v). 
The final  assertion  follows easily from  this and the fact that 
$\dim_k(A)=9 $.
\end{proof}

For the sake of completeness we record  the following without proof.

\begin{Proposition} \label{3afterstrrmk}    For any $\alpha, \beta \in k$,  
the algebra  $k\langle x, y, z \rangle/I $, where 
$I$ is the ideal generated by 
$ \{ zx, xz, zy, yz,   z^2 -  xyxy,  y^2-x^2,  
y^2-\alpha xyx +\beta yxy \} \cup \{u_1u_2u_3u_4u_5, \ u_i \in \{x, y\}, 
1\leq i \leq 5 \} $  
is a local symmetric $k$-algebra of dimension $9$ and with center 
isomorphic to $Z(B)$.
\end{Proposition}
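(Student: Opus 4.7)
The plan is to verify the four claims (dimension $9$, local, symmetric, center isomorphic to $Z(B)$) by direct computation from the presentation. First, I would reduce arbitrary words in $x,y,z$ to linear combinations of the nine putative basis elements $\{1,x,y,z,xy,yx,xyx,yxy,xyxy\}$. The relations $xz=zx=yz=zy=0$ together with $z^2=xyxy$ imply that any word containing $z$ either equals $z$, reduces to a word in $x,y$ alone, or vanishes; the relations $x^2=y^2=\alpha xyx+\beta yxy$ combined with the length-$5$ vanishing reduce any word in $x,y$ to an alternating word of length at most $4$. The auxiliary identity $xyxy=yxyx$ (item (iv) of Proposition \ref{3nailstrcha}) is recovered by comparing $x^3=x\cdot x^2=\beta xyxy$ with $x^3=x^2\cdot x=\beta yxyx$, and analogously for $y^3=\alpha xyxy=\alpha yxyx$; it collapses the would-be $10$-element spanning set to nine.

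The main obstacle is the converse, showing $\dim_k A\ge 9$. I would construct an explicit $9$-dimensional associative $k$-algebra $\tilde A$ with basis indexed by these nine monomials and multiplication table dictated by the defining relations. The associativity check reduces to finitely many triples of basis elements, most of which vanish trivially on degree grounds once one uses that any product of length $\geq 5$ in $x,y$ is zero. Once associativity is established, the universal property of the presentation yields a surjection $A\twoheadrightarrow\tilde A$, and combined with the spanning bound this forces $\dim_k A=9$ and identifies $A$ with $\tilde A$.

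The remaining three claims are routine verifications on this explicit model. For locality, the ideal $J=(x,y,z)$ has codimension $1$ and is nilpotent (indeed $J^5=0$, since any $5$-fold product of elements of $\{x,y,z\}$ either forces a $z$ adjacent to an $x$ or $y$ and hence vanishes, or is a length-$5$ monomial in $x,y$), so $J=J(A)$. For symmetry, define $s:A\to k$ to be the coefficient of $xyxy$ in the basis; the identity $s(uv)=s(vu)$ is a finite check on basis pairs, with the key case $s(xy\cdot xy)=s(yx\cdot yx)$ handled by $xyxy=yxyx$, and no nonzero left ideal lies in $\ker(s)$ because $\Soc(A)=k\cdot xyxy$ with $s(xyxy)=1$. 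For the center, direct commutator computation (again using $xyxy=yxyx$) yields $Z(A)=k\{1,\,z,\,xy+yx,\,xyx,\,yxy,\,xyxy\}$ with multiplication determined by $z^2=xyxy$, $(xy+yx)^2=2xyxy$, and all other pairwise products of the nontrivial generators zero. To match the basis of Lemma \ref{centerA} for $Z(B)$, pick $\mu\in k$ with $\mu^2=-1/2$ (available since $k$ is algebraically closed of characteristic $3$) and set $z_1=z+\mu(xy+yx)$, $z_2=z-\mu(xy+yx)$, $z_3=xyx$, $z_4=yxy$, $z_5=2xyxy$; then $z_1^2=z_2^2=0$, $z_1z_2=z_5\ne 0$, and all other pairwise products vanish, giving the required isomorphism $Z(A)\cong Z(B)$.
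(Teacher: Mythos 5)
Your overall plan (explicit 9-element spanning set, construct a concrete 9-dimensional model to bound the dimension from below, then check locality, symmetry and the center by direct computation) is sound, and the change of basis $z_1 = z + \mu(xy+yx)$, $z_2 = z - \mu(xy+yx)$ with $\mu^2 = -1/2$ does indeed produce the multiplication pattern of Lemma \ref{centerA}. However, there is a genuine gap in the step where you claim the identity $xyxy = yxyx$ is recovered from the stated relations. Your computation correctly yields $\beta(xyxy - yxyx) = 0$ (from $x^3$) and $\alpha(xyxy - yxyx) = 0$ (from $y^3$), but this forces $xyxy = yxyx$ only when $\alpha \ne 0$ or $\beta \ne 0$. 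When $\alpha = \beta = 0$ the relations collapse to $x^2 = y^2 = 0$, $z$ annihilating $x$ and $y$, $z^2 = xyxy$, and the length-$5$ vanishing; the ideal $\langle x^2, y^2, \text{(length }\ge 5)\rangle$ of $k\langle x,y\rangle$ is a monomial ideal whose quotient has as basis exactly the nine alternating words of length $\le 4$, and in that quotient $xyxy$ and $yxyx$ are linearly independent. Adjoining $z$ then gives a $10$-dimensional algebra, and its socle $J^4$ is $2$-dimensional (spanned by $xyxy$ and $yxyx$), so by Lemma \ref{locsymgen}(i) it is not symmetric. Thus the proposition fails at $\alpha = \beta = 0$ for the ideal as written, and this is precisely the parameter value the paper actually needs (Proposition \ref{3unipotent} and the proof of Theorem \ref{stabletomor} both use the $\alpha = \beta = 0$ case).

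The remedy is simply to add $xyxy - yxyx$ to the generating set of $I$ (equivalently, to take all of (i)--(v) of Proposition \ref{3nailstrcha} as defining relations); with that modification your argument goes through for every $\alpha, \beta$, and you should record that the relation $xyxy - yxyx$ is redundant only when $(\alpha,\beta) \ne (0,0)$. A minor additional point: the generator listed is $y^2 - \alpha xyx + \beta yxy$, which gives $x^2 = y^2 = \alpha xyx - \beta yxy$ rather than $\alpha xyx + \beta yxy$ as in Proposition \ref{3nailstrcha}(iii); your computation $x^3 = \beta xyxy$ uses the latter sign convention. This is harmless (relabel $\beta \mapsto -\beta$), but worth flagging so that the reader does not think the two normalizations are being confused.
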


\section{ The case $\dim_k(J_1/J_2) =2$.  }

The   following lemma    divides  the case that  $\dim_k(J_1/J_2)=2 $ 
in two subcases.

\begin{Lemma} \label{2start}   Suppose that  $A$ is symmetric,  local, 
$\dim_k (A)  = 9 $,  $Z(A) \cong Z(B)$ as $k$-algebras and 
$\dim_k (J_1/J_2)=2 $.
Then there exists a basis $\{x+J_2, y +J_2 \}$  of $J_2/J_3$ 
such that  either  $xy +yx \in J_3 $ or $ y^2 \in J_3 $. 
\end{Lemma}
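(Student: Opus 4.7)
The plan is to reformulate the problem in terms of a natural quadratic map and then apply the algebraic closure of $k$. First I would record a dimension count. By Proposition \ref{firstreduction}, $\dim_k J_2 = 6$ and $\dim_k J_3 = 3$; from Lemma \ref{elemfact2}(iii), $\dim_k(J_2/Z_2) = 1$, so $\dim_k Z_2 = 5$; and since $J_3 \subseteq Z \cap J_2 = Z_2$ by Lemma \ref{elemfact2}(ii), we conclude $\dim_k(Z_2/J_3) = 2$. From Lemma \ref{elemfact2}(vi) applied with $u=v$ (and $\mathrm{char}(k) \neq 2$), $v^2 \in Z_2$ for every $v \in J_1$. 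A routine check then shows that
$$ q \colon J_1/J_2 \longrightarrow Z_2/J_3, \qquad v+J_2 \longmapsto v^2 + J_3 $$
is a well-defined quadratic map whose symmetric polarization is $B(u+J_2, v+J_2) := (uv+vu)+J_3$. Both the source and target are $2$-dimensional over $k$.

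If $q$ has a nontrivial zero, i.e.\ there exists $\bar y \ne 0$ in $J_1/J_2$ with $q(\bar y)=0$, then choosing any lift $y \in J_1$ of $\bar y$ and any $x \in J_1$ completing $\{\bar y\}$ to a basis of $J_1/J_2$ yields $y^2 \in J_3$, and we are done.

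In the contrary case $q$ is anisotropic: $q(\bar v) \neq 0$ for every nonzero $\bar v \in J_1/J_2$. Here I aim instead at $xy+yx \in J_3$, i.e.\ at $B(\bar x, \bar y)=0$. For each $\bar y \in J_1/J_2$ define the linear map $L_{\bar y} \colon J_1/J_2 \to Z_2/J_3$ by $L_{\bar y}(\bar u) = B(\bar u, \bar y)$. Since $L_{\bar y}(\bar y) = 2 q(\bar y) \ne 0$ in characteristic $3$, $\bar y \notin \ker L_{\bar y}$, so any nonzero $\bar x \in \ker L_{\bar y}$ is automatically independent of $\bar y$ and gives a basis with $B(\bar x, \bar y)=0$. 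Thus it suffices to find $\bar y \ne 0$ with $\det L_{\bar y}=0$. After fixing bases of source and target, the entries of the matrix of $L_{\bar y}$ are $k$-linear in the coordinates of $\bar y$, so $\det L_{\bar y}$ is a homogeneous polynomial of degree $2$ in two variables over $k$: it is either identically zero, in which case every nonzero $\bar y$ works, or nonzero, in which case it has a nontrivial zero over the algebraically closed field $k$.

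The main obstacle is finding the right conceptual setup. A direct case analysis on the linear dependence among the three vectors $x^2+J_3$, $y^2+J_3$, $(xy+yx)+J_3$ in the $2$-dimensional space $Z_2/J_3$ is possible but entails several sub-cases and delicate non-degeneracy checks ($ad-bc \neq 0$) on the resulting changes of basis. The quadratic-map viewpoint above organizes everything into the clean dichotomy "$q$ vanishes somewhere" versus "$q$ is anisotropic", and exploits algebraic closure of $k$ at the end through the existence of a nontrivial zero of a binary quadratic form.
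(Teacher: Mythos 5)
Your proof is correct and takes a genuinely different route from the paper. The paper proceeds by direct case analysis: it picks an arbitrary basis $\{x+J_2,y+J_2\}$, notes that $\{xy+yx+J_3, x^2+J_3, y^2+J_3\}$ spans the $2$-dimensional space $Z_2/J_3$ so some two of these form a basis, and then in each of the two resulting cases carries out an explicit quadratic change of variables (using $\sigma,\tau = -1 \pm \sqrt{1-\mu}$ in the first case, a shift $y\mapsto y-\beta x$ and then the substitution $x\pm y$ in the second). Your reformulation via the quadratic map $q \colon J_1/J_2 \to Z_2/J_3$ and its polarization $B$ is a cleaner conceptual package: the dichotomy is precisely ``$q$ has a nonzero zero'' versus ``$q$ is anisotropic,'' and in the latter case the existence of a singular $L_{\bar y}$ comes from the fact that a binary form of degree $2$ over an algebraically closed field has a nontrivial zero. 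Both arguments exploit $k=\bar k$ and $\mathrm{char}\,k\ne 2$ in essentially the same places (taking square roots vs.\ factoring a binary quadratic), so they are of comparable depth; what your version buys is economy and a single uniform mechanism in place of several sub-cases, at the cost of not producing the explicit change-of-basis formulas (which the paper does not reuse, so nothing is lost). One small point worth being explicit about in a write-up: $L_{\bar y}$ lands in $Z_2/J_3$ and is well defined on $J_1/J_2$ because $uy+yu\in Z_2$ by Lemma~\ref{elemfact2}(vi) and because shifting $u$ by an element of $J_2$ perturbs $uy+yu$ only inside $J_3$. Also note that the lemma as printed says ``basis of $J_2/J_3$''; this is a typo for $J_1/J_2$ (as $J_2/J_3$ is $3$-dimensional here), and you have correctly read it that way.
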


\begin{proof} By Proposition \ref{firstreduction}, 
$\dim_k(J_2/J_3)=3 $, and  by   Lemma \ref{elemfact2}, 
$$\dim_k(Z_2/J_3)=2, \  
\dim_k(J_2/Z_2) =2 .$$ Let $\{x+J_2, y +J_2 \}$  be a  basis of $J_1/J_2$.  
Then,
$\{xy +J_3, yx+J_3, x^2+J_3, y^2 +J_3 \} $ is a spanning set for 
$J_2/J_3 $.    By Lemma \ref{elemfact2}, 
$\{xy +yx, x^2,  y^2 \} \subseteq  Z_2 $ and $J_2 \nsubseteq Z_2$,  so   
$\{xy +yx +J_3, x^2+J_3, y^2+J_3 \} $ spans $Z_2/J_3 $ and  
some two element subset    of  
$\{xy +yx +J_3, x^2+J_3, y^2+J_3 \} $ is a  basis  of $Z_2/J_3 $. 

Suppose 
first that   $\{x^2+J_3, y^2+J_3 \} $ is a  basis  of $Z_2/J_3 $. Let
$ xy + yx \equiv \lambda x^2  + \mu y^2 \  \mod  \ J_3 $,
$ \lambda, \mu \in k $.  If  $\lambda =\mu =0 $, then  $xy +yx \in J_3 $.
So, suppose that $\lambda \ne 0 $.  By  replacing $y$  with 
$\lambda^{-1} y $, we may  assume that $ \lambda =1 $.    First 
consider the case  that $\mu \ne  1 $. 
Set 
$$\sigma = -1 + \sqrt{1-\mu}, \  \tau = -1 - \sqrt{1-\mu}, \   
x'= x +\sigma y,   \   y '= x  + \tau y .$$
Then  since $ \sigma \ne \tau $,  
$\{ x' +J_2, y' +J_2 \} $  is  a basis of $J_1/J_2$ and
$$ x'y' + y'x' \equiv 0 \  \mod  \ J_3 . $$ 
So, replacing $\{x, y\} $ with $\{x', y' \}$ proves the result.
Now  consider the case  $\mu =1 $ and set $ y'=y-x $. Then  
$$ y'^2 = y^2 + x^2 -(yx +xy )  \in J_3. $$   Replacing  $ y $ by 
$y'$ yields the result.  

Now suppose that 
$\{xy +yx +J_3, x^2+J_3 \} $ is a  basis  of $Z_2/J_3 $ and let 
$$ y^2\equiv \alpha x^2 + \beta (xy +yx ) \  \mod \ J_3$$ for 
$\alpha, \beta \in k$.   Set $y'= y -\beta x$. Then,
$$ y'^2 =      y^2 +\beta^2 x^2 -\beta (xy +yx) \equiv  
(\alpha +\beta^2) x^2  \  \mod \ J_3. $$ 
So, replacing $ y$ by $y'$ we may assume that  
$$ y^2 \equiv \gamma  x^2 \ \mod \ J_3 ,  \      \gamma \in k $$
If $\gamma =0 $, the result is proved. Otherwise, replacing $y $ by 
$\gamma^{-1}  y $, we may assume that 
$$ y^2 \equiv  x^2 \ \mod \ J_3 .$$
Now set $x'= x+y $ and $ y'=x-y $. Then $ \{ x' +J_2, y' +J_2\} $ 
is a basis of $J_1/J_2 $ and 
$$ x'y' + y'x'   \in J_3. $$ 
So, replacing  $ \{ x +J_2, y +J_2\} $ with  $ \{ x' +J_2, y' +J_2\} $ 
yields the result.
\end{proof}

In view of the above Lemma, for the rest of the section, 
we will work under one of the   following two hypotheses.

\begin{Hypothesis} \label{Ahyp2nice}  $A$ is symmetric,  local, 
$\dim_k (A)  = 9 $,  $Z(A) \cong Z(B)$ as $k$-algebras 
$\dim_k (J_1/J_2)=2 $ and   $J_1/J_2$   has a basis $\{x+J_2, y +J_2 \}$ 
with  $xy +yx \in J_3 $.
\end{Hypothesis}

\begin{Hypothesis} \label{Ahyp2bad}   $A$ is symmetric,  local, 
$\dim_k (A)  = 9 $,  $Z(A) \cong Z(B)$ as $k$-algebras,
$\dim_k (J_1/J_2)=2 $ and   $J_1/J_2$   has a basis $\{x+J_2, y +J_2 \}$ 
such that  $ y^2 \in J_3 $.
\end{Hypothesis}

\begin{Lemma}\label{2nicequant}  Suppose that $A$ satisfies Hypothesis 
\ref{Ahyp2nice}. Then   $x$ and $y$ may be chosen  such that   $xy +yx = 0$.
\end{Lemma}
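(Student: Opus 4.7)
The plan is to mirror the iterative adjustment strategy of Lemma \ref{3quant}: starting from the basis with $xy + yx \in J_3$, I would successively modify $x$ and $y$ by elements of $J_2$ and then of $J_3$ so as to push $xy + yx$ into $J_4$ and then into $J_5 = 0$ (which vanishes by Lemma \ref{elemfact2}(ix)). Any replacement $x \mapsto x + u$, $y \mapsto y + v$ with $u, v \in J_2$ leaves $\{x + J_2, y + J_2\}$ a basis of $J_1/J_2$ and changes $xy + yx$ by $(xv + vx) + (uy + yu) + (uv + vu)$, so the problem reduces to choosing $u, v$ to absorb the obstruction at each step.

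For the first step, write $xy + yx \equiv f \mod J_4$ with $f \in J_3$ and take $u, v \in J_2$, so that the term $uv + vu$ lies in $J_4$ and is absorbed. Using that $\{x^2 + J_3, y^2 + J_3, xy + J_3\}$ is a basis of $J_2/J_3$ (from Proposition \ref{firstreduction} together with the hypothesis $xy + yx \in J_3$), the centrality of $x^2$, $y^2$, and $xy + yx$ (Lemma \ref{elemfact2}(vi)), and the resulting congruences $yx^2 = x^2 y$, $y^2 x = xy^2$, $xyx \equiv -x^2 y \mod J_4$, and $yxy \equiv -xy^2 \mod J_4$, a direct expansion would yield
$$(xv + vx) + (uy + yu) \equiv 2a' x^3 + 2b' xy^2 + 2a\, x^2 y + 2b\, y^3 \mod J_4$$
for $u = a x^2 + b y^2 + \cdots$ and $v = a' x^2 + b' y^2 + \cdots$ (the omitted terms contributing zero modulo $J_4$). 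Since these four monomials span the $2$-dimensional space $J_3/J_4$ (every generator of $J_3/J_4$ produced by Lemma \ref{kulsconv} reduces into this span via the relations above), I can solve for $a, b, a', b' \in k$ so that this expression equals $-f$ modulo $J_4$, achieving $xy + yx \in J_4$ after replacement.

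For the second step, assume $xy + yx = f \in J_4$ and take $u, v \in J_3$. Since $J_3 \subseteq Z$ by Lemma \ref{elemfact2}(ii) and $uv + vu \in J_6 = 0$, the correction simplifies to $2(xv + yu)$. Lemma \ref{kulsconv} applied with $I = J$, $n = 1$, and generating set $\{x, y\}$ gives $J_4 = xJ_3 + yJ_3$, so $(u, v) \mapsto xv + yu$ surjects onto the one-dimensional $J_4$, and choosing $u, v$ with $xv + yu = -f/2$ forces $xy + yx = 0$. The only non-routine point is the spanning claim for $\{x^3, x^2 y, xy^2, y^3\}$ in $J_3/J_4$ at the first step, which is where the full strength of the hypothesis $xy + yx \in J_3$ together with the centrality of $x^2$ and $y^2$ is used; once this is verified the process terminates automatically because $J_5 = 0$.
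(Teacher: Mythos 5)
Your proposal is correct and follows essentially the same iterative scheme as the paper's proof: first adjust $x$ and $y$ by elements of $J^2$ to push $xy+yx$ into $J^4$, then adjust by elements of $J^3$ to kill it entirely (using $J^5=0$). The paper reaches the key intermediate fact — that $\{x^2y+J^4, xy^2+J^4\}$ spans $J^3/J^4$ — via Lemma~\ref{nailrads} and $[A,A]\cap J^4=0$ rather than via the monomial-reduction argument you give, and it writes out explicit substitutions $x\mapsto x-\tfrac{\lambda}{2}x^2$, $y\mapsto y-\tfrac{\mu}{2}y^2$ (and case-by-case final corrections) where you argue abstractly via surjectivity of $(u,v)\mapsto xv+uy$ onto $J^4$; these are presentational variations of the same proof.
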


\begin{proof}   Since $xy+yx \in J_3 $,
$[x,[xy]] \equiv    2x^2y \  \mod  \  J_4 $ and 
$[y,[xy]]   \equiv    -2xy^2 \  \mod \ J_4 $.  On the other hand, 
by Lemma \ref{nailrads},  and the fact that $[A,A] \cap  J_4 =0 $,
$\{[x,[xy]] +J_4, [y,[xy]] +J_4\} $  is a basis of   $J_3/J_4 $. Thus, 
$\{ x^2y +J_4, xy^2+J_4 \} $ is a   basis  of $ J_3/J_4 $.   
So,   $$  xy + yx \equiv \lambda x^2 y + \mu xy^2 \  \mod  \  J_4. $$
Set $ x' = x- \frac{\lambda}{2} x^2 $ and   $ y' = y- \frac{\mu}{2} y^2 $. 
Then $ \{ x', y' \} $  is a basis of $J_1/J_2$ and  by Lemma \ref{elemfact2},
$$ x'y' + y'x' \equiv (x- \frac{\lambda}{2} x^2) (y- \frac{\mu}{2} y^2) + 
( y- \frac{\mu}{2} y^2) (x- \frac{\lambda}{2} x^2) \equiv 0 \ \mod  \ J_4  $$
So,  replacing $\{x, y \} $ with $ \{x', y' \} $, 
we may assume that $xy +yx \equiv 0 \  \mod \ J_4  $.  By the first 
part of the argument, 
$\{ x^2y +J_4, xy^2+J_4 \} $ is a   basis  of $ J_3/J_4 $.    
Hence,  by Lemma \ref{kulsconv} 
$xy+yx =\lambda x^2y^2 $ or $  xy +yx= \lambda x^3y $  for some 
$\lambda \in k $.   In the  first case,
replacing  $y$ by  $ y':= y -\frac{\lambda}{2} xy^2 $ and in the second  case 
replacing   $y $ by $ y':= y -\frac{\lambda}{2} x^2y $   yields the result.

\end{proof}

\begin{Proposition} \label{2nicequnail}  Suppose that $A$ satisfies Hypothesis 
\ref{Ahyp2nice}. Then there exists a basis $\{x+J_2, y+J_2 \} $ of 
$ J_1/J_2 $ such that  $\{x^2+J_3, y^2+J_3 , xy+J_3 \} $ is a basis of   $J_2/J_3 $, 
$\{x^2y +J_4, xy^2+J_4 \} $ is a basis of   $J_3/J_4 $, $\{x^2y^2 \} $ is a basis of 
$J_4 $, and $J^5=\{0\} $ and such that the following relations hold.

(i) $xy +yx =0 $.

(ii) $ x^3 = \alpha x y^2 +\beta x^2y^2  $  and 
$y^3 = \gamma  x^2y  +\delta x^2y^2 $   for some 
$\alpha, \beta, \gamma, \delta \in k $, with $\alpha, \gamma \in \{ 0, 1 \}$.

(iii)  $ x^3y= xy^3= 0$.

(iv)For any  $u_i \in \{x, y \} $, $1\leq i \leq 5 $, 
$u_1u_2u_3u_4u_5=0 $.

The above is a complete set of generators and relations for the 
algebra  $A$, that is  $A \cong  k\langle x, y,  \rangle/I $, where 
$I$ is the ideal of $k\langle x, y  \rangle $,   generated by: 

$ \{ xy +yx,   x^3 - \alpha x y^2 -\beta x^2y^2 , y^3 -\gamma  x^2y  
-\delta x^2y^2, x^3y, xy^3  \} \cup \{u_1u_2u_3u_4u_5, \ u_i \in \{x, y\}, 
1\leq i \leq 5 \}, $  $\alpha, \gamma \in \{0, 1\} $,  $\beta, \delta \in k $.

\end{Proposition}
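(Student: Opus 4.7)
The plan is to build on Lemma~\ref{2nicequant}, which already supplies a basis $\{x+J_2,y+J_2\}$ of $J_1/J_2$ with $xy+yx=0$, giving (i). The Loewy layers follow from Proposition~\ref{firstreduction}, which in the present case yields $(\dim J_1/J_2,\dim J_2/J_3,\dim J_3/J_4,\dim J_4)=(2,3,2,1)$. Since $x^2,y^2$ and $xy+yx$ all lie in $Z_2$ by Lemma~\ref{elemfact2}(vi) and $xy+yx=0$, the images $\{x^2+J_3,y^2+J_3\}$ span the $2$-dimensional $Z_2/J_3$; adjoining $xy+J_3$, which falls outside $Z_2/J_3$ by Lemma~\ref{nailrads}, one obtains a basis of $J_2/J_3$. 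An application of Lemma~\ref{kulsconv} together with the reductions $yx^2=x^2y$ and $yxy=-xy^2$ (consequences of the centrality of $x^2$ and the rule $yx=-xy$) shows that $\{x^3,x^2y,xy^2,y^3\}$ spans $J_3/J_4$. Because the commutators $[x,xy]=2x^2y$ and $[y,xy]=-2xy^2$ lie in $[A,A]$, and Lemma~\ref{locsymgen}(iii) gives $[A,A]\cap\Soc(A)=0$ together with $\Soc(A)=J_4$ (Lemma~\ref{elemfact2}(ix)), neither $x^2y$ nor $xy^2$ lies in $J_4$; hence $\{x^2y+J_4,xy^2+J_4\}$ is a basis of the $2$-dimensional $J_3/J_4$. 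Finally, $J_4=\Soc(A)=Z_1^2$ (Lemma~\ref{elemfact2}(vii),(ix)) is one-dimensional, spanned by the nonzero element $x^2\cdot y^2=x^2y^2$, and (iv) is the identity $J^5=0$ from the same lemma.

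For (iii), I would exploit centrality of $J_3$ directly. Since $x^2y\in J_3\subseteq Z$ commutes with $x$, one has $x\cdot x^2y=x^2y\cdot x=x^2(yx)=-x^3y$, so $2x^3y=0$; because $\mathrm{char}(k)=3$, this forces $x^3y=0$. The same argument applied to the central element $xy^2$ yields $xy^3=0$.

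For (ii), write $x^3=a\,x^2y+b\,xy^2+c\,x^2y^2$ using the basis of $J_3$ just established. On the one hand, $yx^3=a\,yx^2y+b\,yxy^2+c\,yx^2y^2=a\,x^2y^2-b\,xy^3+0$ (the last term lies in $J^5=0$); on the other, using centrality of $x^2$ and $yx=-xy$, $yx^3=(yx)x^2=-x(yx^2)=-x\cdot x^2y=-x^3y$, which expands to $-a\,x^2y^2-b\,xy^3$. Comparing the coefficients of $x^2y^2$ gives $a=-a$, and characteristic $3$ forces $a=0$. Thus $x^3=\alpha\,xy^2+\beta\,x^2y^2$ with $\alpha=b,\beta=c$; the symmetric calculation in $y$ produces $y^3=\gamma\,x^2y+\delta\,x^2y^2$. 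The rescalings $x\mapsto\mu x$, $y\mapsto\nu y$ (and the $x\leftrightarrow y$ swap, which preserves $xy+yx=0$) are then used to normalize $\alpha$ and $\gamma$ into $\{0,1\}$.

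To complete the presentation, the nine elements $\{1,x,y,x^2,xy,y^2,x^2y,xy^2,x^2y^2\}$ form a $k$-basis of $A$ by the Loewy layer computation, so $\dim A=9$. In the free algebra $k\langle x,y\rangle$ modulo the displayed ideal $I$, the rule $yx=-xy$ reduces every monomial to the form $x^iy^j$; the relations in (ii) rewrite $x^3$ and $y^3$ as combinations of lower monomials plus $J_4$ contributions; and (iii) together with (iv) (which subsumes all five-fold products) annihilate everything outside the nine normal-form monomials. Thus $k\langle x,y\rangle/I$ has dimension at most nine, so the natural surjection onto $A$ is an isomorphism. The main obstacle, I expect, is the final normalization step in (ii): the product $\alpha\gamma$ is invariant under the rescalings preserving (i), so fitting both $\alpha$ and $\gamma$ simultaneously into $\{0,1\}$ requires a careful sequence of substitutions which do not undo earlier adjustments.
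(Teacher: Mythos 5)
Your argument follows the paper's route very closely: start from Lemma~\ref{2nicequant} to get $xy+yx=0$, read off the Loewy layers from Proposition~\ref{firstreduction} and Lemmas~\ref{elemfact2}, \ref{nailrads}, \ref{kulsconv}, derive (iii) from the centrality of $J_3$, and then kill the $x^2y$-coefficient of $x^3$ by another such computation. Two remarks, one small and one substantive.

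The small one is an ordering issue. You assert that $\{x^2y^2\}$ is a basis of $J_4$ (and in particular $x^2y^2\neq 0$) \emph{before} you have proved (iii). At that point $Z_1^2$ is only known to be spanned by $\{x^4,y^4,x^2y^2\}$, and these are all proportional in the one-dimensional $J_4$, so any one of them could a priori vanish. The paper sidesteps this by proving $x^3y=xy^3=0$ first and then invoking Lemma~\ref{kulsconv}: from the basis $\{x^2y+J_4,\,xy^2+J_4\}$ of $J_3/J_4$ one gets $J_4=\text{span}\{x\cdot x^2y,\,x\cdot xy^2\}=\text{span}\{x^3y,x^2y^2\}$, and since $x^3y=0$ while $J_4\neq 0$, the element $x^2y^2$ must be nonzero. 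Your later derivation of (iii) is correct, so this is only a matter of reordering, but as written the step is unjustified.

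The substantive point is the one you flag yourself, and your instinct is right: the normalization to $\alpha,\gamma\in\{0,1\}$ is \emph{not} achievable in general. Under a diagonal rescaling $x\mapsto\mu x$, $y\mapsto\nu y$ one has $\alpha\mapsto\mu^2\nu^{-2}\alpha$ and $\gamma\mapsto\nu^2\mu^{-2}\gamma$; the $x\leftrightarrow y$ swap exchanges $\alpha$ and $\gamma$; and the allowed shifts $x\mapsto x+\lambda xy+\cdots$, $y\mapsto y+\lambda'xy+\cdots$ leave $\alpha,\gamma$ unchanged. So $\alpha\gamma$ is a genuine invariant. The hypothesis $Z(A)\cong Z(B)$ does constrain it, but only to $\alpha\gamma\neq 1$ (otherwise $\Soc(Z(A))$ would be $4$-dimensional and the binary form $\alpha a^2+2ab+\gamma b^2$ on $Z_1/\Soc(Z(A))$ degenerate), which still allows any $\alpha\gamma\in k\setminus\{1\}$. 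The paper's substitutions $y'=\sqrt{\alpha^{-1}}\,y$, $x'=\sqrt{\gamma^{-1}}\,x$ in fact send $(\alpha,\gamma)$ to $(\alpha^2/\gamma,\gamma^2/\alpha)$, which equals $(1,1)$ only when $\alpha\gamma=1$, precisely the excluded case; when exactly one of $\alpha,\gamma$ vanishes the corrected substitution $\sqrt{\alpha}\,y$ (resp.\ $\sqrt{\gamma}\,x$) does normalize it to $1$. So the proposition as stated, and the paper's own proof of it, have the same flaw you anticipated. Fortunately this is harmless for Theorem~\ref{stabletomor}: Propositions~\ref{out2nice} and~\ref{2unipotent} only use whether each of $\alpha,\beta,\gamma,\delta$ vanishes, not its value, and the case $\alpha=\beta=\gamma=\delta=0$ (the one where $A\cong B$) is unaffected. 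In short, your proof is essentially the paper's, you have correctly identified an error in the normalization step that the paper glosses over, and there is no ``careful sequence of substitutions'' that repairs it — only the weaker conclusion (one of $\alpha,\gamma$ can be put in $\{0,1\}$, or each is $0$ or nonzero) is available, and that is all that is actually needed downstream.
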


\begin{proof}   By Lemma \ref{2nicequant}, there is a basis 
$\{x+J_2, y+J_2 \} $  of $ J_1/J_2 $ with $xy +yx =0 $.  
Since $x^3, y^3 \in J_3 \subseteq Z $  by Lemma \ref{elemfact2}, 
the relation $xy +yx =0 $ forces
$x^3y = -yx^3 $ and $xy^3=-y^3 x $. Thus, $x^3y =xy^3 =0 $.  
As noted   in the  proof of  Lemma \ref{2nicequant},  
$\{x^2y+J_4, xy^2 +J_4\} $ is  a basis of $J_3/J_4 $  and   either 
$\{ x^3y \}$ or 
$\{x^2y^2 \}$ spans $J_4 $.  But,  $x^3y=0 $. Thus, $x^2y^2 $ is a basis of 
$ J_4 $. Write
$$ x^3 = \lambda  x^2y  +   \alpha  xy^2 + \beta  x^2y^2 . $$
Multiplying on the  right with $y$ yields
$$ 0 =  \lambda x^2y^2 $$  hence $\lambda  =0 $. Similarly,
$$ y^3 =   \gamma x^2y  + \delta x^2y^2 .$$
If $\alpha =0 $, set $y'= y $, and if $\alpha \ne 0 $, set 
$y'= \sqrt{\alpha^{-1}} y$.  If  $\gamma =0 $, set $x'= x $, and if 
$\gamma \ne 0 $, set 
$x'= \sqrt{\gamma^{-1}}x $. Then replacing $\{x, y \}  $ with $\{x', y' \}$ 
gives $\alpha, \gamma \in \{ 0, 1 \}$.
Thus (i)-(iv) are satisfied.   The final assertion follows from the fact 
that   any algebra satisfying (i)-(iii)  has dimension at most $9$.
\end{proof}

For $A$  satisfying Hypothesis \ref{Ahyp2bad},  we  will 
require only  partial  structure results.

\begin{Lemma}  \label{2bad}
Suppose that   $A$ satisfies hypothesis  \ref{Ahyp2bad}.  Then,

(i) $\{x^2, xy, yx \} $ is a basis of $J_2/J_3 $. 

(ii)  $yxy \notin J_4 $.

(iii) $\{xyxy  \}$    is a basis  of $J_4$.

(iv) $\{ xyx +J_4,  yxy +J_4 \}$ is a basis of  $J_3/J_4 $.
\end{Lemma}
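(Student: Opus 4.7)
For (i), the images of $\{x^2,\,xy,\,yx,\,y^2\}$ span $J_2/J_3$ because $\{x+J_2,\,y+J_2\}$ generates $J_1/J_2$; the hypothesis $y^2\in J_3$ reduces this spanning set to $\{x^2,\,xy,\,yx\}$, and since $\dim_k(J_2/J_3)=3$ by Proposition \ref{firstreduction}, it is a basis. For (ii), I combine Lemma \ref{nailrads}---applicable since $\dim_k J_1/(Z_1+J_2)=2=\dim_k J_1/J_2$ forces $Z_1\subseteq J_2$---which yields $\{[x,y],\,[x,xy],\,[y,xy]\}$ as a basis of $[A,A]$, with Lemma \ref{locsymgen}(iii) giving $[A,A]\cap\Soc(A)=0$. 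Using $y^2\in J_3$ we have $xy^2\in J_1J_3\subseteq J_4=\Soc(A)$, so $[y,xy]\equiv yxy\pmod{J_4}$; if $yxy$ were in $J_4$, then $[y,xy]\in[A,A]\cap\Soc(A)=0$, contradicting its being a basis element.

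Part (iii) is the substantive step and, I expect, the main obstacle. My plan is to exploit the basis element $[x,xy]=x^2y-xyx$ of $[A,A]$. First, from $x^2\in Z$ (Lemma \ref{elemfact2}(vi)) and $x^3\in J_3\subseteq Z$ (Lemma \ref{elemfact2}(ii)), both $x^3y$ and $x^2yx$ equal $yx^3$, so $x\cdot[x,xy]=x^3y-x^2yx=0$. Second, $yx^2y=x^2y^2\in J_2J_3\subseteq J_5=0$ since $y^2\in J_3$, hence $y\cdot[x,xy]=-yxyx$. Third, $xyxy-yxyx=[x,yxy]\in[A,A]\cap J_4=0$, so $xyxy=yxyx$. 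Now if $xyxy=0$ then also $yxyx=0$, and therefore $J\cdot[x,xy]=0$; this places the nonzero element $[x,xy]$ in $\Soc(A)$, contradicting $[A,A]\cap\Soc(A)=0$. Thus $xyxy\neq 0$, and $\dim_k J_4=1$ makes $\{xyxy\}$ a basis of $J_4$.

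For (iv), since $\dim_k(J_3/J_4)=2$ it suffices to show linear independence of $\{xyx+J_4,\,yxy+J_4\}$. Given $\alpha xyx+\beta yxy\in J_4$, left-multiplication by $y$ sends $J_4$ into $J_5=0$ and annihilates the second term because $y\cdot yxy=y^2\cdot xy\in J_3J_2=0$, leaving $\alpha\cdot yxyx=0$; by (iii), $\alpha=0$. Then $\beta yxy\in J_4$, and (ii) forces $\beta=0$, completing the proof.
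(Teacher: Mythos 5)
Your proof is correct, and (i), (ii), (iv) are essentially the paper's arguments (in (iv) you left-multiply by $y$ where the paper right-multiplies; this is cosmetic). Your route to (iii), however, is genuinely different. The paper argues directly with the element $yxy$: if $xyxy=0$ then, because $yxy\in J_3\subseteq Z$, also $yxyx=0$, and because $y^2\in J_3$, both $y\cdot yxy$ and $yxy\cdot y$ lie in $J_5=0$; thus $yxy$ is annihilated by $J$ on both sides, so $yxy\in\Soc(A)=J_4$, contradicting (ii). You instead work with the commutator $[x,xy]$: you compute $x[x,xy]=0$, $y[x,xy]=-yxyx$, establish $xyxy=yxyx$ via $[x,yxy]\in[A,A]\cap J_4=0$, and then conclude that $xyxy=0$ would force $J\cdot[x,xy]=0$, placing the nonzero element $[x,xy]$ in $\Soc(A)$ and contradicting Lemma~\ref{locsymgen}(iii). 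Both are socle-membership arguments. The paper's is shorter because it simply reuses (ii); yours requires a few auxiliary computations but replaces (ii) by the structural fact $[A,A]\cap\Soc(A)=0$, which some might find clarifying since it makes the commutator mechanism explicit. You are also right to note that Lemma~\ref{nailrads} applies because $\dim_k J_1/(Z_1+J_2)=2=\dim_k J_1/J_2$ forces $Z_1\subseteq J_2$; the paper leaves this implicit.
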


\begin{proof}  (i) is immediate from the first part of the proof of  
Lemma \ref{2start}. By Lemma \ref{nailrads},
$\{ [xy, x], [xy, y] \} $  is a basis of $J_3/J_ 4 $. But 
$ [xy, y] =yxy-y^2x  \equiv yxy   \  \mod  \ J_4 $. This proves (ii).
Suppose that $xyxy = 0 $.  Since $ yxy \in Z$, this also means that 
$ yxyx=0 $. 
Since  $y^2\in J_3 $,  
$y yxy =0= yxyy $.   Thus $ yxy \in   \Soc(A)=J_4 $, contradicting  (ii).  Thus, 
$xyxy \ne 0 $, proving (iv). 
Since $\dim_k(J_3/J_4 )=2 $,   by (ii), in order to prove   
(iv) it suffices to show that     $xyx  +J _4 \ne \lambda yxy +J_4 $ for 
$\lambda \in k $. If this were the case then 
multiplying  on both sides on the right with  $y$ would give
$xyxy= \lambda yxyy= 0 $ and this  would contradict (ii) as  $yxyx=xyxy$.
\end{proof}

\begin{Lemma} \label{2getboring}    Suppose that $A$ satisfies Hypothesis 
\ref{Ahyp2bad}.    Then,  $x$ and $y$ can be chosen 
such that  

(i)  $x^2y \in J_4 $.

(ii) $x^3y=0 $.

(iii) $x^3   \equiv  yxy  \  \mod \  J_4 $.
\end{Lemma}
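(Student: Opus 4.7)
The plan is to obtain all of (i), (ii), (iii) via a single linear substitution $x' = x - a_1 y$ followed by a rescaling of $y$; the argument rests on two identities automatically forced by Hypothesis \ref{Ahyp2bad} and on the symmetric structure of $A$.

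First I would extract the key data. By Lemma \ref{2bad}, $\{xyx+J_4, yxy+J_4\}$ is a basis of $J_3/J_4$, so I may write $x^3 \equiv a_1 xyx + b_1 yxy \pmod{J_4}$ and $x^2 y \equiv a_2 xyx + b_2 yxy \pmod{J_4}$ for scalars $a_1, a_2, b_1, b_2 \in k$. Multiplying the second congruence on the right by $y$ and using $y^2 \in J_3$ (which forces $yxy \cdot y = y \cdot xy^2 \in J_5 = 0$ and $xyxy \cdot y \in J_5 = 0$), one gets $x^2 y \cdot y \equiv a_2 \cdot xyxy \pmod{J_5}$; but $x^2 y \cdot y = x^2 y^2 \in J_2 \cdot J_3 = J_5 = 0$, forcing $a_2 = 0$. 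Next, $x^2 y = y x^2$ is central (from $x^2 \in Z$), and comparing $x \cdot x^2 y = x^3 y = a_1 xyxy$ with $x^2 y \cdot x = (a_1 a_2 + b_2) xyxy = b_2 xyxy$ gives $b_2 = a_1$.

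I then substitute $x' = x - a_1 y$, keeping $y$ fixed; the hypothesis $y^2 \in J_3$ is preserved. A direct expansion modulo $J_4$, using $y^2, xy^2, y^3 \in J_4$ together with the identities $a_2 = 0$, $b_2 = a_1$, yields
\[
(x')^3 \equiv (b_1 - a_1^2) \cdot yxy \pmod{J_4}
\qquad \text{and} \qquad
(x')^2 y \in J_4,
\]
the cancellations being arranged precisely by the choice of coefficient $-a_1$. Hence (i) holds for the pair $(x', y)$.

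To reach (iii) it remains to prove $b_1 - a_1^2 \ne 0$; for then rescaling $y'' = cy$ with $c^2 = b_1 - a_1^2$ (possible as $k$ is algebraically closed) yields $(x')^3 \equiv y'' x' y'' \pmod{J_4}$, and (ii) follows at once from (iii) since $x^3 y \equiv yxy \cdot y = y \cdot xy^2 \equiv 0 \pmod{J_5}$. For the nonvanishing, normalize $s(xyxy) = 1$ and read off the multiplications ($x^4 = (a_1^2 + b_1) xyxy$, $x^3 y = y x^3 = a_1 \cdot xyxy$, $(xy)^2 = (yx)^2 = xyxy$, $xy \cdot yx = yx \cdot xy = 0$) to see that the Gram matrix of $(u,v) \mapsto s(uv)$ on $J_2/J_3$ in the basis $\{x^2, xy, yx\}$ is
\[
\begin{pmatrix} a_1^2 + b_1 & a_1 & a_1 \\ a_1 & 1 & 0 \\ a_1 & 0 & 1 \end{pmatrix},
\]
with determinant $b_1 - a_1^2$. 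Since the Loewy dimensions of $A$ are $(1, 2, 3, 2, 1)$, one has $J_2^{\perp} = J_3$ (by dimension count, using $J_3 \cdot J_2 \subseteq J_5 = 0$), so this form is nondegenerate on $J_2/J_3$ and hence $b_1 - a_1^2 \ne 0$.

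The crux is really the first paragraph: without the observation $a_2 = 0$, the scalar $a_2$ remains invariant under the only allowed changes of basis (namely $x \mapsto ax + by$, $y \mapsto cy$, plus modifications in $J_2$, since any $x$-component in the new $y$ would destroy $y^2 \in J_3$), and condition (i) could never be achieved. Once $a_2 = 0$ is in hand the remaining computations are routine, and the symmetricity of $A$ neatly forces the nondegeneracy that makes the final rescaling possible.
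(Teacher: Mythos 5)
Your proposal is correct, and it matches the paper's proof for parts (i) and (ii): derive $a_2 = 0$ by multiplying $x^2y$ on the right by $y$ and using $y^2 \in J_3$, then substitute $x' = x - b_2 y$ (you show separately that $b_2 = a_1$, which the paper does not need but which is a valid observation using centrality of $x^2y$ and $x^3$). The interesting divergence is in establishing (iii), i.e. the nonvanishing of the $yxy$-coefficient of $(x')^3$ modulo $J_4$.

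The paper argues by contradiction: if the coefficient were $0$, then $x^4 = 0$, $x^2(xy+yx)=0$ and $x^2 J_3 = 0$, so $x^2$ would annihilate the basis $\{xy+yx, x^2, xyx, yxy, xyxy\}$ of $Z_1 = J(Z)$, putting $x^2$ in $\Soc(Z(A))$ and giving $\dim_k \Soc(Z(A)) \geq 4$, contradicting $\dim_k \Soc(Z(A)) = 3$. You instead use the symmetric structure directly: $J_2^\perp = J_3$ by a dimension count, so $s$ induces a nondegenerate pairing on $J_2/J_3$, and computing the Gram matrix in the basis $\{x^2, xy, yx\}$ gives determinant exactly $b_1 - a_1^2$, forcing it to be nonzero. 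I checked the Gram matrix entries (for instance $s(xy\cdot yx) = s(y^2 x^2) = 0$ since $y^2x^2 \in J_2 J_3 \subseteq J_5 = 0$, and $s(x^4) = a_1^2 + b_1$ via $xyx^2 = a_1 xyxy$) and the determinant computation; they are correct. Your route is arguably more conceptual --- it converts the nonvanishing into a nondegeneracy statement about the form, which makes the role of the symmetric hypothesis transparent --- whereas the paper's route is more elementary and trades on the explicit description of $Z(B)$ via Lemma \ref{centerA}. Both are about the same length; yours has the mild advantage of not requiring the reader to verify that $\{xy+yx, x^2, xyx, yxy, xyxy\}$ is a $k$-basis of $Z_1$.
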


\begin{proof}   Let $ \{x +J_2, y +J_2 \} $   be  a basis of  
of $J_1/J_2$ with $y^2\in J_3 $.  By Lemma \ref{2bad}, we may write
$$ x^2y \equiv \alpha xyx +\beta yxy  \ \mod  \ J_4. $$ 
Then, $$ 0= x^2y^2 = \alpha xyxy . $$ 
Hence $\alpha =0 $. Now 
$$ (x- \beta y )^2 y \equiv  x^2y - \beta yxy  \equiv 0 \  \mod  \ J_4 $$  
so replacing   $x$ by $x'= x- \beta y $  we may assume that 
(i) holds. (ii)  is immediate from (i).  

By  Lemma \ref{2bad},
$$x^3 \equiv   \alpha xyx +  \beta yxy   \ \mod  \in  J_4 . $$  
Multiplying with $y$ on the right yields  $\alpha =0 $.  
Now suppose if possible that $\beta =0 $.  
Then multiplying with $x$ yields  $x^4 =0 $. By  (ii) 
$$x^2(xy +yx) = 2x^3y =0 .$$   Since 
$\{ xy+yx, x^2, xyx, yxy, xyxy \}$ is a basis of $Z_1=J(Z)$  and since  
$\{xyx, yxy, xyxy \}  \subseteq  \Soc(Z(A)$, it follows that 
$ x^2 \in \Soc(Z(A))$, and hence that $ \dim_k(\Soc(Z(A))  \geq 4 $, 
a contradiction. So 
$\beta \ne 0 $. Replacing $x$   by $x':=  \sqrt {\beta^{-1}}x$
yields a basis      satisfying (i), (ii), (iii). 

\end{proof}

\section{Outer automorphism groups}

In this section we will keep to the notation   introduced 
for outer automorphism groups  in  Section 2.

\begin{Proposition}\label{out3}      Suppose that $A$ satisfies  
Hypothesis \ref{Ahyp3}, and let $\alpha, \beta  $ be as in  
Proposition \ref{3nailstrcha}.

(i)  If either $\alpha $ or $\beta  $ is non-zero, then 
$\Out^0(A)/R_u(\Out^0(A)) $ is  contained in  a 
one-dimensional   torus.

(ii)   If $\alpha=\beta =0 $, then 
$  \Out^0(A)/R_u(\Out^0(A))$  is a two-dimensional torus.
\end{Proposition}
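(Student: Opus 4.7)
The strategy is to apply Proposition~\ref{Outgen}(iii), which reduces the computation of $\Out^0(A)/R_u(\Out^0(A))$ to that of the reductive quotient of $f_1(\Aut(A))^0$, the identity component of the image of $\Aut(A)$ in $\GL(J_1/J_2)$. Using the basis $\{x+J_2,\, y+J_2,\, z+J_2\}$ of $J_1/J_2$ afforded by Proposition~\ref{3nailstrcha}, I write a typical automorphism $\varphi$ as $\varphi(x)=ax+by+cz+r_x$, $\varphi(y)=dx+ey+fz+r_y$, $\varphi(z)=gx+hy+iz+r_z$ with $r_x,r_y,r_z\in J_2$, and work out which matrices $M=f_1(\varphi)\in \GL_3(k)$ arise. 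The plan is to exploit the defining relations (i)--(v) of $A$ at successive orders of the radical filtration.

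First, I use the annihilation relations $zx=xz=zy=yz=0$. Since $x^2,y^2\in J_3$, the leading ($J_2/J_3$) part of $\varphi(z)\varphi(x)$ is $gb\,xy+ha\,yx$, and similarly for the three companion products. Requiring each to vanish modulo $J_3$ yields $gb=ha=ge=hd=0$ together with the symmetric conditions; combined with $\det M\neq 0$ this forces $g=h=0$, so $M$ is block lower triangular with nonzero $i$ in the $(3,3)$-entry. Next, from $x^2=\alpha xyx+\beta yxy\in J_3$ I obtain $\varphi(x)^2\in J_3$; its leading coefficient is $ab(xy+yx)$, so $ab=0$, and similarly $de=0$. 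Together with invertibility this forces the top-left $2\times 2$ block to be monomial: either $b=d=0$ (diagonal) or $a=e=0$ (anti-diagonal). Only the diagonal case lies in the identity component, so I henceforth assume $b=d=0$.

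To find the constraints on the torus parameters $(a,e,i)$, I work modulo $J_4$. Writing $r_x=p_1xy+p_2yx+(J_3)$ and $r_y=q_1xy+q_2yx+(J_3)$ and expanding $\varphi(x)^2=\alpha\varphi(x)\varphi(y)\varphi(x)+\beta\varphi(y)\varphi(x)\varphi(y)$ modulo $J_4$, I match the coefficients of the basis $\{xyx,yxy\}$ of $J_3/J_4$. This pins $p_1+p_2=\alpha a(e-1)$ and, combined with the analogous equation for $\varphi(y)^2$, produces the obstructions $\alpha\,e(a^2-e)=0$ and $\beta\,a(e^2-a)=0$. Hence $\alpha\neq 0 \Rightarrow e=a^2$ and $\beta\neq 0\Rightarrow a=e^2$. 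For the parameter $i$, writing $r_z=u_1xy+u_2yx+(J_3)$ and reading $\varphi(z)\varphi(x)=\varphi(x)\varphi(z)=0$ at the level of $J_3/J_4$, the coefficients of $xyx$ are $au_1$ and $au_2$ respectively; these force $u_1=u_2=0$, so $r_z\in J_3$ and $r_z^2\in J_6=0$. Consequently $\varphi(z)^2=i^2\,xyxy$, while a direct calculation gives $\varphi(x)\varphi(y)\varphi(x)\varphi(y)=(ae)^2\,xyxy$. The relation $z^2=xyxy$ therefore forces $i^2=(ae)^2$, and in the connected component $i=ae$.

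Finally, the entries $c,f$ of $M$ (the lower-left block) are unconstrained by these relations, and the corresponding subgroup of $f_1(\Aut(A))^0$ is unipotent, hence contained in $R_u(f_1(\Aut(A))^0)$. The reductive quotient is therefore the torus swept out by $(a,e,ae)$ subject to the constraints of the third step: when $\alpha=\beta=0$ it is the two-dimensional torus $\{(a,e,ae):a,e\in k^*\}$, proving (ii); when $\alpha$ or $\beta$ is nonzero it collapses to a one-dimensional subtorus such as $\{(a,a^2,a^3):a\in k^*\}$, proving (i). The main technical obstacle is the careful tracking of the $J_3/J_4$ corrections required to deduce $u_1=u_2=0$; without this rigidification of the $z$-direction, the relation $z^2=xyxy$ would be absorbable into the unipotent part and one would spuriously obtain a larger torus.
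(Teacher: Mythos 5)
Your proof is correct and follows essentially the same route as the paper's: pass via Proposition \ref{Outgen}(iii) to the reductive quotient of $f_1(\Aut(A))^0$, use the presentation of Proposition \ref{3nailstrcha} to constrain the matrix entries order by order in the radical filtration, and in particular isolate the diagonal constraints $i=ae$ (from $z^2=xyxy$) together with $e=a^2$ or $a=e^2$ (from $x^2=y^2=\alpha xyx+\beta yxy$). The one small variation is that you deduce the vanishing of the $x$- and $y$-coefficients of $\varphi(z)$ from the annihilation relations $zx=xz=zy=yz=0$ plus invertibility, whereas the paper gets this at once from $\varphi(z)\in Z(A)$; both are valid.

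One point worth making explicit, which you (like the paper) leave implicit at the end: to conclude in case (ii) that the quotient \emph{is} a two-dimensional torus rather than merely contained in one, you should observe that each diagonal map $x\mapsto ax$, $y\mapsto ey$, $z\mapsto aez$ with $a,e\in k^{\times}$ does extend to an automorphism of $A$, which is immediate from the relations when $\alpha=\beta=0$. Also a minor notational caution: in the presentation of Proposition \ref{3nailstrcha} the relation $xy+yx=0$ no longer holds (it was traded away in the change of basis), so the leading term of $\varphi(x)^2$ modulo $J_3$ is $ab(xy+yx)$ with $xy+J_3$ and $yx+J_3$ linearly independent; you use this correctly, but it is worth being aware that $xy+yx$ is a genuine (nonzero) element of $J_2/J_3$ here.
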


\begin{proof}  Let $\{x+J_2, y+J_2, z+J_2\} $ be  a basis of  
$J_1/J_2 $  as in  Proposition \ref{3nailstrcha}.  
  Let  ${\mathcal B}$ be the ordered basis 
$\{x, y, z, xy, yx, xyx, yxy, xyxy \}$ of $J_1$ and let  
$\varphi :  A \to A$ be a $k$-linear map. For each  $u \in  {\mathcal B}$, 
write $\varphi (u) =\sum_{v \in {\mathcal B} } \lambda_{v, u} v $.     
Suppose that $\varphi \in \Aut(A)$.
Then, $$ 0 \equiv \varphi(x)^2   \equiv \lambda_{x,x}  \lambda_{y, x} (xy +yx)  
\  \mod  \  J_3, $$
$$0  \equiv \varphi(y)^2 \equiv   \lambda_{x,y}  \lambda_{y, y} (xy +yx)  
\    \mod  \  J_3, $$
$$ \lambda_{x,x}  \lambda_{y, x} = \lambda_{x,y}  \lambda_{y, y} =0, $$
that is  either $\lambda_{x,y} =\lambda_{y,x}=0 $ or 
$\lambda_{x,x} =\lambda_{y,y} =0 $.
Identifying $GL ( J_1/(J_2  +Z_1)  $ with     $GL_2(k)$    through the  
ordered basis $\{x  + J_2+Z_1 ,  y + J_2 +Z_1 \} $    it follows  that  the 
connected component   of $f_{J_1, J_2 +Z_1} (\Aut(A) )^0 $  is   contained    
in the subgroup of diagonal matrices. Hence, if $ \varphi    
\in \Aut^0(A)$ then  
\begin{equation}\label{3diag1}   
\lambda_{x,y} =\lambda_{y,x}=0. \end{equation} 
We will assume  from now on  that  this is the case.

Since $\varphi (z) \in Z(A)$, 
\begin{equation}\label{3diag2}    \lambda_{x,z}=\lambda_{y,z}=0  
\end{equation}
So, 
$$0 \equiv  \varphi(z) \varphi(x) \equiv   
\lambda_{xy,z} \lambda_{x,x}xyx     \ \mod  \  J_4, \text{\ and \ }  
0 \equiv  \varphi(z) \varphi(y) \equiv   
\lambda_{yx,z} \lambda_{y,y}yxy     \ \mod  \  J_4,    $$  
\begin{equation} \label{3zcon1} \lambda_{xy,z}=  \lambda_{yx, z} =0. 
\end{equation}  

So, $$  \lambda_{x,x}^2 \lambda_{y,y} ^2 xyxy= \varphi(x)\varphi(y)
\varphi(x)\varphi(y)=   \varphi(z)^2 =  \lambda_{z,z}^2  xyxy,  $$
yielding
$$ \lambda_{z,z}^2 =  \lambda_{x,x}^2 \lambda_{y,y}^2 .$$  Thus,  if  
$\varphi \in  
\Aut^0(A)$  then  
\begin{equation} \label{3zdiag} 
\lambda_{z,z} = \lambda_{x, x} \lambda_{y,y} . \end{equation}

Identify $GL(J_1/J_2) $ with $GL_3(k)$ via the ordered basis 
$\{ x+J_1, y+J_1, z+J_1 \} $  of $ J_1/J_2 $.  By Equations
\ref{3diag1} and  \ref{3zdiag}, $f_1 (\Aut^0(A)) $ is contained in 
the  subgroup  of lower triangular matrices  $(a_{ij}) $ 
satisfying $a_{33} = a_{11} a_{22} $, $ a_{2,1}=  0 $.   So 
$R_u(f_1 (\Aut^0(A))$ is the subgroup of  $f_1 (\Aut^0(A)) $ consisting 
of  matrices for which $a_{ii} =1 $, $ 1\leq i \leq 3 $.

 (i)   Suppose first that  $\beta \ne 0 $.
Comparing  the coefficient of   $yxy $  on both sides of  the relation   
$\varphi(x) ^2 = \alpha \varphi(x) \varphi(y) \varphi(x) + \beta  
\varphi(y) \varphi(x) \varphi(y)  $ yields
$$ \lambda_{x,x}^2  \beta = \lambda_{x,x} \lambda_{y,y}^2 \beta, $$
hence, $\lambda_{x,x} =  \lambda_{y,y}^2$.  It follows  from the 
above discussion that $f_1 (\Aut^0(A))/  R_u((\Aut^0(A))$   is   
isomorphic to  a subgroup  of   the group of   diagonal matrices  of  
$GL_3(k)$  satisfying $a_{11} = a_{2,2} $ and $ a_{3,3}  = a_{2,2}^ 3 $. 
So, the result follows by Proposition \ref{Outgen}. Note that $f_1 (\Aut^0(A))=
f_1 (\Aut(A))^0 $ (see \cite[Proposition  7.4B]{humph:book}).

Similarly,  if $\alpha \ne 0 $, then  
comparing  the coefficient of   $xyx $ in the 
relation $\varphi(y) ^2 = \alpha \varphi(x) \varphi(y) \varphi(x) + \beta  
\varphi(y) \varphi(x) \varphi(y)  $  yields that  
$f_1 (\Aut^0(A))/  R_u((\Aut^0(A))$   is   
isomorphic to  a subgroup of the group   of     
diagonal matrices  of  $GL_3(k)$  satisfying 
$a_{22} = a_{11} $ and $ a_{3,3}  = a_{11}^ 3 $. 

\

(ii)  Suppose from now on that  $\alpha=\beta =0 $. 
Then, $$ 0 =  \varphi(x)^2 = \lambda_{x,x} (\lambda_{xy,x} +  
\lambda_{yx, x})xyx  +  (2\lambda_{x,x} \lambda_{yxy,x } + \lambda_{xy, x} ^2 +\lambda_{yx, x}^2) xyxy, $$
giving
\begin{equation}\label{3alphax} 
\lambda_{yx, x} = -\lambda_{xy,x},   \  \    \lambda_{yxy,x }= 
-\lambda_{xy, x} ^2\lambda_{x,x} ^{-1}.  \end{equation}
Similarly, 
\begin{equation}\label{3alphay} 
\lambda_{xy, y} = -\lambda_{yx,y},     \  \   \lambda_{xyx,x }= 
-\lambda_{yx, y} ^2\lambda_{y,y} ^{-1}.  \end{equation}

Then, $$ 0 =  \varphi(z) \varphi(x) = 
(\lambda_{z,z} \lambda_{z,x}+\lambda_{x, x} \lambda _{yxy,z}) yxyx, $$
and
$$ 0 =  \varphi(z) \varphi(y) = (\lambda_{z,z} \lambda_{z,y}+ \lambda_{y, y} 
\lambda _{xyx,z}) yxyx, $$
which gives 

\begin{equation} \label{3zcon2}  \lambda_{yxy, z} = -  
\lambda_{z,z}\lambda_{z,x}\lambda_{x,x}^{-1} ,  \  \   \lambda_{xyx, z}= -  
\lambda_{z,z}\lambda_{z,y}\lambda_{y,y}^{-1}
\end{equation}

Conversely, it  is easy to check that  any element of $GL(J)$  satisfying 
the  Equations  \ref{3diag1}, \ref{3diag2}, \ref{3zcon1}, \ref{3zdiag},   
\ref{3alphax}, \ref{3alphay} and  \ref{3zcon2}   and such that the image  
under $\varphi $ of elements   of ${\mathcal B}  \cap J_2$ is    the 
multiplicative  extension of the images of $x, y $ and $z$  of  
$\varphi $ is in $ \Aut^0(A)$.    So,  identifying   as in (i), 
$GL(J_1/J_2) $ with $GL_3(k)$ via the ordered basis 
$\{ x+J_1, y+J_1, z+J_1 \} $  of $ J_1/J_2 $,  
$f_1 (\Aut^0(A)) $ is    the  subgroup  of lower triangular matrices  
$(a_{ij}) $ satisfying $a_{33} = a_{11} a_{22} $, $ a_{2,1}=  0 $,  
$R_u(f_1 (\Aut^0(A)) )$    is the subgroup of  $f_1 (\Aut^0(A)) $ 
consisting of matrices  which satisfy additionally that 
$ a_{1,1}=a_{2,2}= a_{3,3}   =1 $ and   $f_1 (\Aut^0(A))/ R_u(f_1 (\Aut^0(A)) ) 
$ is   isomorphic to the  subgroup of diagonal matrices satisfying 
$a_{3,3}= a_{1,1}a_{2,2}$.  So,  (ii)  follows by Proposition \ref{Outgen}.
\end{proof}

For the next result, note that the centre of an affine   
algebraic group is a closed subgroup of the group, and hence it makes 
sense to speak of the dimension  of the centre as an algebraic variety.

\begin{Proposition} \label{3unipotent}  Suppose that  $A $  satisfies 
Hypothesis  \ref{Ahyp3} and that  the elements $\alpha, \beta $  
Proposition \ref{3nailstrcha} are both equal to $0$.
Then $$\dim(Z(R_u(\Out^0(A))) ) = 3. $$ 
\end{Proposition}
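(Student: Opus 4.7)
The plan is to work at the Lie algebra level. By Proposition \ref{Outgen}(i)-(ii), $\Inn(A)$ is a connected unipotent normal subgroup of $\Aut^{0}(A)$, hence contained in $R_u(\Aut^{0}(A))$, and Lemma \ref{alguse}(ii) gives $R_u(\Out^{0}(A)) = R_u(\Aut^{0}(A))/\Inn(A)$. Setting $\mathfrak{u} = \mathrm{Lie}\, R_u(\Aut^{0}(A))$ and $\mathfrak{i} = \mathrm{Lie}\, \Inn(A)$, its Lie algebra is $\mathfrak{g} = \mathfrak{u}/\mathfrak{i}$. I would first show that $R_u(\Out^{0}(A))$ is two-step nilpotent; since $p = 3$, truncated Baker--Campbell--Hausdorff then gives $\dim Z(R_u(\Out^{0}(A))) = \dim \mathfrak{z}(\mathfrak{g})$, reducing the statement to a linear algebra calculation.

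Using the presentation from Proposition \ref{3nailstrcha} with $\alpha = \beta = 0$ (so $x^{2} = y^{2} = 0$, $xz = zx = yz = zy = 0$, $z^{2} = xyxy = yxyx$, and every length-five monomial vanishes), I would parametrize $D \in \mathfrak{u}$ by
$$D(x) = a_{0} z + a_{1} xy + a_{2} yx + a_{3} xyx + a_{4} yxy + a_{5} xyxy,$$
with analogous expansions $D(y) = b_{0} z + b_{1} xy + \cdots$ and $D(z) = c_{3} xyx + c_{4} yxy + c_{5} xyxy$. The Leibniz rule applied to each defining relation yields the linear constraints $a_{2} = -a_{1}$, $a_{4} = 0$, $b_{2} = -b_{1}$, $b_{3} = 0$, $c_{3} = -b_{0}$, $c_{4} = -a_{0}$, leaving nine free parameters, which matches the dimension of $R_u(\Aut^{0}(A))$ from the proof of Proposition \ref{out3}. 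By Lemma \ref{geninn}, $\mathfrak{i}$ is spanned by $\mathrm{ad}_{x}, \mathrm{ad}_{y}, \mathrm{ad}_{xy}$ with the single relation $\mathrm{ad}_{xy} + \mathrm{ad}_{yx} = 0$ (as $xy + yx$ is central), so $\dim \mathfrak{i} = 3$.

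The main computation is the bracket $[D, D']$. A short Leibniz argument, exploiting the vanishing of $x^{2}, y^{2}$ and of all length-five monomials, shows that every $D \in \mathfrak{u}$ annihilates the ``top'' elements: $D(xyx) = D(yxy) = D(xyxy) = 0$. From this one obtains $[D, D'](z) = 0$ and
$$[D, D'](x) = A \cdot xyx + (a_{0}' c_{5} - a_{0} c_{5}') \cdot xyxy, \quad [D, D'](y) = -A \cdot yxy + (b_{0}' c_{5} - b_{0} c_{5}') \cdot xyxy,$$
where $A = (a_{0} b_{0}' - a_{0}' b_{0}) + (a_{1}' b_{1} - a_{1} b_{1}')$. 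In particular $[\mathfrak{u}, [\mathfrak{u}, \mathfrak{u}]] = 0$, confirming nilpotency class two. Since elements of $\mathfrak{i}$ have vanishing $xyxy$-component, $\bar{D} \in \mathfrak{z}(\mathfrak{g})$ iff these two $xyxy$-coefficients vanish for every choice of $(a_{0}', b_{0}', c_{5}')$, which forces $a_{0} = b_{0} = c_{5} = 0$. The centralizer in $\mathfrak{u}$ is then six-dimensional and contains $\mathfrak{i}$, yielding $\dim \mathfrak{z}(\mathfrak{g}) = 6 - 3 = 3$. The main obstacle is organizing the Leibniz bookkeeping cleanly: only after seeing that every $D$ kills the top elements $xyx, yxy, xyxy$ does the bracket collapse to the tidy form above, making visible that centrality is controlled precisely by the three leading parameters $a_{0}, b_{0}, c_{5}$.
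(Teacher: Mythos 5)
Your Lie-algebraic approach is correct and reaches the paper's answer, but by a genuinely different route. The paper computes directly with explicit $8\times 8$ unitriangular matrices representing $R_u(\Aut^0(A))$: writing $\varphi^\circ=\varphi-\mathrm{Id}$, it observes that all triple products $\varphi^\circ\tau^\circ\psi^\circ$ vanish, deduces $\varphi\tau\varphi^{-1}\tau^{-1}=\mathrm{Id}+\varphi^\circ\tau^\circ-\tau^\circ\varphi^\circ$, and characterizes when this lies in $\Inn(A)$, giving $a=e=i=0$, which is exactly your $a_0=b_0=c_5=0$. You instead linearize to the Lie algebra of derivations, compute the bracket, and transfer back to the group via the truncated Baker--Campbell--Hausdorff correspondence, legitimate here since the nilpotency class is $2<3=p$. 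The two computations run in parallel (the paper's $\varphi^\circ\tau^\circ-\tau^\circ\varphi^\circ$ is, in effect, your Lie bracket), but your version keeps everything bilinear, which makes the constraint structure more transparent. The cost is the appeal to the class-$<p$ exponential correspondence, which the paper avoids and which is not among its cited results; to keep the argument self-contained you should either give a reference or observe directly that $\exp(N)=\mathrm{Id}+N+\tfrac12 N^2$ (with inverse $\log(\mathrm{Id}+N)=N-\tfrac12 N^2$) is a variety isomorphism from the Lie algebra, equipped with the truncated BCH law, onto the matrix group, under which $Z(R_u(\Out^0(A)))$ corresponds to $\mathfrak{z}(\mathfrak{g})$.

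Two small points to tidy. Your ansatz $D(z)=c_3 xyx+c_4 yxy+c_5 xyxy$ presupposes $D(z)\in J_3$, whereas a priori $D(z)\in Z(A)\cap J_2$, which also contains $xy+yx$; the $xy+yx$-coefficient does vanish, but this is a further Leibniz constraint (from $zx=0$, producing an $xyx$-term) that should be recorded rather than built silently into the parametrization. Also, the phrase ``spanned by $\mathrm{ad}_x,\mathrm{ad}_y,\mathrm{ad}_{xy}$ with the single relation $\mathrm{ad}_{xy}+\mathrm{ad}_{yx}=0$, so $\dim\mathfrak{i}=3$'' is muddled: that relation is between $\mathrm{ad}_{xy}$ and $\mathrm{ad}_{yx}$, not among your three spanners; what gives $\dim\mathfrak{i}=3$ is that $\mathrm{ad}$ factors through $J_1/Z_1$, which is $3$-dimensional, together with linear independence of $\mathrm{ad}_x,\mathrm{ad}_y,\mathrm{ad}_{xy}$. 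Beyond this the calculation checks out: the $xyx$-coefficient of $[D,D'](x)$ is $a_0b_0'-a_0'b_0+2(a_1b_1'-a_1'b_1)$, which in characteristic $3$ agrees with your $A$, the $xyxy$-coefficients are as you claim, and the final count $6-3=3$ matches the paper.
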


\begin {proof}  Let $\Ub =R_u(\Out^0(A)) $, 
let $\{x+J_2, y+J_2, z+J_2\} $ be  a basis of  
$J_1/J_2 $  as in  Proposition \ref{3nailstrcha}   and    
let ${\mathcal B}$ be the ordered  basis $\{ x, y, z, xy+yx, 
xy-yx,  xyx, yxy, xyx, xyxy \} $  of $J$  (note the  difference  with the 
basis used in  Proposition  \ref{out3}).  Identify $\Aut^0(A)$ with a  
subgroup of $GL_8(k)$   through the basis ${\mathcal B}$.     Let 
$B_8(k)$ be the subgroup of $GL_8(k)$ consisting of lower triangular  
matrices and let $U_8(k) $ be the subgroup of $B_8(k)$  
of strictly unitriangular matrices.  Let $\varphi \in GL_8(k)$  be such that 
that the image  under $\varphi $ of elements   of 
${\mathcal B}  \cap J_2$ is    the 
multiplicative and linear   extension of the images of $x, y $ and $z$  under  
$\varphi $.   By   the proof of Proposition  \ref{out3},  
$\varphi \in\Aut^0(A)$ if and only if 
\begin{equation*} \lambda_{x,y} =\lambda_{y,x}=\lambda_{xy,z}=  
\lambda_{yx, z}= 0;  \end{equation*}
\begin{equation*}\lambda_{xy+yx,x}=\lambda_{xy+yx,y}=\lambda_{xy+yx,z}=
\lambda_{xy-yx,z} =0; \end{equation*}
\begin{equation*}\lambda_{yxy,x}=-\lambda_{xy-yx,x}^2 \lambda_{x,x}^{-1},  
\lambda_{xyx,y}=-\lambda_{xy-yx,y}^2 \lambda_{y,y}^{-1}; \end{equation*} 
\begin{equation*}  \lambda_{yxy, z} = -  
\lambda_{z,z}\lambda_{z,x}\lambda_{x,x}^{-1} ,    \lambda_{xyx, z}= -  
\lambda_{z,z}\lambda_{z,y}\lambda_{y,y}^{-1}. \end{equation*}

In particular,  $\Aut^0(A)$    is   contained 
in  $B_8(k)$ and   $R_u(\Aut^0(A))  = \Aut^0(A) \cap U_8(k)$.   
Thus  $R_u(\Aut^0(A))$   is  the following closed subgroup of $U_8(k)$: 
\[   \left\{ \left( \begin{array}{cccccccc}
1 & 0 & 0 &0 &0 &0 &0 &0 \\
0& 1 &  0 &0 &0 &0 &0 &0 \\
a& e &  1  &0 &0 &0 &0 &0 \\
0& 0 &  0  & 1 &0 &0 &0 &0 \\
b & f & 0 &0 & 1 &0 &0 &0\\
c& -f^2  & -e  & 0 &-2f  & 1 & 0 & 0\\
-b^2 & g &-a  & 0 &-2b &0 & 1 & 0 \\
d&h&i& 2(g+ae+2bf +c)&0 &0&0 &1 
\end{array} \right)   \ : \  a,b,c,d,ef,g, h, i \in k \right\}  \]

We now calculate $\Inn(A)$. Let $ u \in J_1$. So by Lemma \ref{geninn}, 
$\varphi_u (v) =  v + [v, u] + [vu, u] $ for all $v \in V$.
By  Lemma \ref{elemfact2}, $Z\cap J_1 $ is an ideal of $A$, 
hence we may assume that  $ u = \epsilon_1 x + \epsilon_2 y + 
\epsilon_3 (xy -yx) $,  $\epsilon_i \in k$, $ 1\leq i \leq 3 $.
By Lemma \ref{elemfact2} one easily calculates
 $$  \varphi_u (x) = x+ \epsilon_2(xy-yx) +(\epsilon_1\epsilon_2 -2\epsilon_3) 
xyx -\epsilon_2^2yxy; $$
 $$  \varphi_u (y) = y -\epsilon_1(xy-yx) -\epsilon_1^2xyx+
(\epsilon_1\epsilon_2 + 2\epsilon_3) yxy ;  $$
$$  \varphi_u (z) = 1 . $$
Thus $\Inn (A)$ consist of matrices above such that    $ a=e =d=h=i=0 $ and 
$ c+g =-2bf $.

For each $\varphi \in \Aut^0(A)$, let $\varphi^0 :=  \varphi -Id $. Then from 
the above description of $\Aut^0(A)$,  we have that for all 
$\varphi \in \Aut^0(A)$,   $\{xyx, yxy, xyxy \} \subset \Ker(\varphi^0)$, 
$\varphi^0(xy-yx) $,  $\varphi^0(xy+yx) $ and $\varphi^0(z) $ 
are  contained in  the $k$-span of  $\{xyx, yxy, xyxy \}$ and 
$\varphi^0(x) $ and  $\varphi^0(y) $ are contained  in the $k$-span of 
 $\{z, xy-yx, xy+yx, xyx, yxy, xyxy \}$. Hence, 
$\varphi^{\circ} \tau^{\circ }\psi^{\circ} =0 $ for all  
$ \varphi, \tau, \psi  \in \Aut^0(A)$. It follows that  for all 
$\varphi \in \Aut^0(A)$, $\varphi^{-1} = Id+\varphi^{\circ} 
+ {\varphi^{\circ} }^2 $ and for all 
$ \varphi, \tau \in \Aut^0(A)$  
$$\varphi \tau \varphi^{-1}\tau^{-1}  = 
Id + \varphi^{\circ}\tau^{\circ}  - \tau^{\circ}\varphi^{\circ}. $$
Further,  since $\{z, xy-yx, xy+yx, xyx, yxy, xyxy \}$ is contained in the 
kernel of   $\varphi^{\circ}\tau^{\circ} $, 
$$ \varphi^{\circ} \tau^{\circ}(u)  -\tau^{\circ}\varphi^{\circ}(u) =0 $$
for all $ u \in {\mathbb B} $ different from $x, y$.  
Let $\varphi $ be the matrix displayed above and let $\tau $ be   the 
matrix with  $a $ replaced by $a' $ etc. Then,
$$ \varphi^{\circ}  \tau^{\circ}(x) - \tau^{\circ} \varphi^{\circ} (x) = 
(ae'-a'e +2bf'-2b'f)xyx  + (a'i -ai')xyxy, $$
$$ \varphi^{\circ}  \tau^{\circ}(y) - \tau^{\circ} \varphi^{\circ} (y) = 
(ea'-e'a +2fb'-2f'b)yxy  + (e'i -ei')xyxy. $$
Thus,   for any $\varphi \in  \Aut^0(A)$,  
$\varphi \tau \varphi^{-1}\tau^{-1}  \in \Inn (A)$ for all $\tau \in \Aut^0(A)$ 
if and only if $$ a=e =i =0 . $$  

Let $\hat \Zb $ be the   inverse image in $R_{u}(\Aut^0(A)) $ of 
$Z(\Ub)$. There is  is an obvious  injective morphism of varieties   
$ k^6 \to   GL_8(k)$ whose image is  $\hat \Zb$ and   $\hat \Zb$  is a 
closed subgroup  of $\Aut^0(A)$ and hence of $GL_8(k)$.  Thus, 
$\hat \Zb$ has dimension $6$ (see \cite[Corollary 4.3]{humph:book}).
Similarly  $\Inn(A)$ has dimension $3$. So,   
it follows that   $Z(\Ub)$   has dimension $3$ (see \cite[Proposition 7.4B]
{humph:book}).

\end{proof}

Note that if $A$ is  as in Proposition  \ref{2nicequnail} with 
$\alpha=\beta=\gamma = \delta =0 $, then $ A \cong B $.

\begin{Proposition}\label{out2nice}     Suppose that  $A$  satisfies 
Hypothesis \ref{Ahyp2nice} and that $\alpha, \beta, \gamma, \delta $ are   
as in  Proposition \ref{2nicequnail}.  

(i)  If   any one of $\alpha $, $\beta  $, $\gamma $ or $\delta $ 
is non-zero, then $\Out^0(A)/ R_u(\Out^0(A))   $ 
is   contained in a $1$-dimensional torus.

(ii)  If $\alpha= \beta =\gamma =\delta =0 $,  then 
$  \Out^0(A)/ R_u(\Out^0(A))  $  is a two-dimensional torus.

\end{Proposition}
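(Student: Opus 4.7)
The plan mirrors the proof of Proposition \ref{out3}. By Proposition \ref{Outgen}(iii), it suffices to compute the quotient of $f_1(\Aut(A))^0$ inside $GL(J_1/J_2) \cong GL_2(k)$ (identified via the basis $\{x+J_2, y+J_2\}$) by its unipotent radical. For $\varphi \in \Aut(A)$, write $\varphi(x) \equiv \lambda_{x,x}\, x + \lambda_{y,x}\, y \pmod{J_2}$ and $\varphi(y) \equiv \lambda_{x,y}\, x + \lambda_{y,y}\, y \pmod{J_2}$. Applying $\varphi$ to the relation $xy+yx = 0$ and reducing modulo $J_3$ yields
\[
2\lambda_{x,x}\lambda_{x,y}\, x^2 + 2\lambda_{y,x}\lambda_{y,y}\, y^2 + (\lambda_{x,x}\lambda_{y,y} + \lambda_{x,y}\lambda_{y,x})(xy+yx) \equiv 0 \pmod{J_3}.
\]
Since $\{x^2+J_3,\ y^2+J_3,\ xy+J_3\}$ is a basis of $J_2/J_3$ by Proposition \ref{2nicequnail}, one obtains $\lambda_{x,x}\lambda_{x,y} = 0 = \lambda_{y,x}\lambda_{y,y}$. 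Combined with invertibility of $(\lambda_{i,j})$, this forces $(\lambda_{i,j})$ to be diagonal or anti-diagonal, and only the diagonal case contains the identity. Writing $a := \lambda_{x,x}$ and $d := \lambda_{y,y}$, we thus have $f_1(\Aut(A))^0$ contained in the diagonal torus of $GL_2(k)$.

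The next, crucial, step exploits the hypothesis $\mathrm{char}(k) = 3$. Writing $\varphi(x) = ax + r_x$ with $r_x \in J_2$ and $r_x \equiv u x^2 + v y^2 + w xy \pmod{J_3}$, direct expansion (together with the vanishing in $J_5 = 0$ of all summands of $\varphi(x)^3$ involving $r_x^2$ or $r_x^3$) gives $\varphi(x)^3 = a^3 x^3 + a^2(x^2 r_x + x r_x x + r_x x^2)$. Using the commutation rules $y^2 x = x y^2$, $xyx = -x^2 y$, and $x^3 y = 0$, the three conjugates sum to $3u\, x^4 + 3v\, x^2 y^2 + w\, x^3 y = 0$ in characteristic $3$. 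Hence $\varphi(x)^3 = a^3 x^3$, and symmetrically $\varphi(y)^3 = d^3 y^3$. The same argument shows $\varphi(x)^2 \varphi(y)^2 = a^2 d^2 x^2 y^2$ exactly, while $\varphi(x)\varphi(y)^2 = a d^2 xy^2 + d^2 (u + v \gamma) x^2 y^2$ (with the symmetric formula for $\varphi(x)^2\varphi(y)$).

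Applying $\varphi$ to the defining relations of Proposition \ref{2nicequnail} and comparing coefficients now gives clean polynomial conditions on $(a, d)$. From $x^3 = \alpha xy^2 + \beta x^2 y^2$, the $xy^2$-coefficient in $J_3/J_4$ yields $\alpha a^3 = \alpha a d^2$, so $\alpha \ne 0 \Rightarrow a^2 = d^2$. If $\alpha = 0$ and $\beta \ne 0$, the $x^2 y^2$-coefficient in $J_4$ yields $\beta a^3 = \beta a^2 d^2$, hence $a = d^2$. By the symmetric analysis of $y^3 = \gamma x^2 y + \delta x^2 y^2$: $\gamma \ne 0 \Rightarrow a^2 = d^2$, and $\gamma = 0, \delta \ne 0 \Rightarrow d = a^2$. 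In every sub-case of (i), the pair $(a, d)$ satisfies a non-trivial algebraic equation, so $f_1(\Aut(A))^0$ is contained in a $1$-dimensional subtorus of the diagonal torus; since the unipotent radical of a torus is trivial, Proposition \ref{Outgen}(iii) gives (i). In the setting of (ii), all four parameters vanish and the relations reduce to $x^3 = y^3 = 0$; these are automatically preserved by every diagonal assignment $\varphi(x) = ax$, $\varphi(y) = dy$, which extends to an automorphism in $\Aut^0(A)$. Hence $f_1(\Aut(A))^0$ is the full $2$-dimensional diagonal torus, and Proposition \ref{Outgen}(iii) yields (ii).

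The main obstacle is the careful tracking of $J_4$-level contributions coming from the perturbations $r_x, r_y$: a priori these could hide further constraints on $(a, d)$ or, conversely, cancel the ones already derived. The characteristic-$3$ hypothesis is essential precisely here --- it causes the sum of the three $x$-conjugates of $r_x$ (and similarly for $r_y$) to vanish identically, which strips the computation of all but its leading-order content and leaves clean polynomial equations in the leading coefficients $a, d$ alone.
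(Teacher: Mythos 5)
Your proof is correct and follows essentially the same route as the paper: impose the preserved relation $xy+yx=0$ modulo $J_3$ to pin $f_1(\Aut^0(A))$ inside the diagonal torus of $\GL(J_1/J_2)$, then compare coefficients of $xy^2$ (resp.\ $x^2y$, $x^2y^2$) in the images of the cubic relations to cut out the extra constraints on the diagonal entries, invoking Proposition~\ref{Outgen}(iii) to transfer the answer to $\Out^0(A)/R_u(\Out^0(A))$. The one point where your write-up is slightly cleaner is the structural observation that, in characteristic $3$, the perturbation $r_x \in J_2$ drops out of $\varphi(x)^3$ entirely (the three conjugates $x^2r_x + xr_xx + r_xx^2$ collapse to $3u\,x^4 + 3v\,x^2y^2 + w\,x^3y = 0$), so $\varphi(x)^3 = \lambda_{x,x}^3\,x^3$ on the nose, whereas the paper tracks the relevant coefficients individually and lets the factor of $3$ kill the stray term; this is an expository simplification rather than a different argument.
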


\begin{proof} Let $\{x+J_2, y+J_2\} $ be  a basis of  
$J_1/J_2 $  as in  Proposition \ref{2nicequnail}.  Let  ${\mathcal B}$ 
be the ordered basis $\{x, y,   x^2,  y^2, xy, xyx, yxy, xyxy \}$ of $J_1$.
Let $\varphi \in \Aut(A) $ and  for $u\in {\mathcal B}$ write
$\varphi (u) =\sum_{v \in {\mathcal B} } \lambda_{v, u} v $.    
Then, $$ 0 \equiv \varphi(x)\varphi(y) +\varphi(y)\varphi(x)   
\equiv \lambda_{x,x}  \lambda_{x, y} x^2    +    \lambda_{y,x}  \lambda_{y, y}y^2
\  \mod  \  J_3, $$
So, $$ \lambda_{x,x}  \lambda_{y, x} = \lambda_{x,y}  \lambda_{y, y} =0, $$
that is  either $\lambda_{x,y} =\lambda_{y,x}=0 $ or 
$\lambda_{x,x} =\lambda_{y,y} =0 $.
Identifying $GL ( J_1/J_2)  $ with     $GL_2(k)$    through the  ordered basis 
$\{x  + J_2,  y + J_2 \} $    it follows  that 
$f_1 (\Aut(A) )^0 $  is   contained    
in the subgroup of diagonal matrices. Hence,  if $ \varphi    
\in \Aut^0(A)$ then  
\begin{equation}\label{2nicediag1}   
\lambda_{x,y} =\lambda_{y,x}=0. \end{equation} 
We will assume  from now on  that  this is the case.

(i)  Now suppose that  $\alpha \ne 0 $.  Comparing the coefficient of  $xy^2$
in the relation $\varphi(x)^3 = \alpha \varphi(x)\varphi(y)^2 +\beta 
\varphi(x)^2\varphi(y)^2 $  gives
$$  \lambda_{x,x} ^3 \alpha  = \lambda_{x,x} \lambda_{y,y}^2 \alpha , $$ 
and hence   $ \lambda_{x,x} ^2 =\lambda_{y,y}^2 $.  Thus, identifying  
$f_1 (\Aut(A) )^0 = f_1 (\Aut^0(A) ) $ with   its image in $GL_2(k)$ as above,
$f_1 (\Aut(A) )^0 $  is contained in the  subgroup of diagonal matrices 
$(a_{ij})$   satisfying $ a_{1,1} =a_{2,2} $. This proves (i)  if $\alpha \ne 0 $
and similarly if $\gamma \ne 0 $.   Suppose now  that 
$ \alpha=\gamma =0 $  and $\beta \ne 0 $. 
Comparing  the coefficient of $x^2y^2 $ in the relation 
$\varphi(x)^3  =\beta \varphi(x)^2 \varphi(y)^2 $  gives
$$ \lambda_{x,x}^3 \beta  + 3 \lambda_{x,x}^2\lambda_{y^2, x} =  
\beta \lambda_{x,x}^2 \lambda_{y,y}^2, $$
and hence (since $k$ is of characteristic $3$),
$$\lambda_{x,x}=  \lambda_{y,y}^2 . $$
Thus $f_1 (\Aut(A) )^0 $  is contained in the  subgroup of diagonal matrices 
$(a_{ij})$   satisfying $ a_{1,1} =a_{22}^2 $. This proves (i) if 
$ \alpha=\gamma =0 $, $\beta \ne 0 $.  The proof for the case that 
 $ \alpha=\gamma =0 $, $\delta \ne 0 $ is  similar.

(ii)  Suppose that $\alpha=\beta =\gamma=\delta =0 $.  
The  coefficient of $ x^2y $   in $\varphi(x) \varphi(y) + \varphi(y) \varphi(x)$   is 
$ 2\lambda_{x,x} \lambda_{y^2, y}  =0   $  and the coefficient of $xy^2 $ in    $\varphi(x) \varphi(y) + \varphi(y) \varphi(x)$  is $ 2\lambda_{y,y} \lambda_{x^2, x} $,
hence 
\begin{equation}\label{2nicediag2}    \lambda_{x,x^2}=\lambda_{y,y^2}=0 .
\end{equation}

Now   the coefficient  of  $x^2y^2 $ in  $\varphi(x) \varphi(y) + \varphi(y) \varphi(x)$    is  
$$2\lambda_{x,x}\lambda_{xy^2, y} - 2\lambda_{xy, x}\lambda_{xy, y} + 2 \lambda_{y,y}\lambda_{xy^2, y}  + 2\lambda_{y^2, x}\lambda_{x^2, y},$$
 hence 
\begin{equation}\label{2nicefunny}  \lambda_{xy^2, y}  =( \lambda_{xy,x}\lambda_{xy, y} -\lambda_{x,x}\lambda_{xy^2, y}  -\lambda_{x^2,y}\lambda_{y^2,x} )\lambda_{y,y}^{-1} \end{equation}

Conversely,   any element of $GL(J)$  satisfying 
the  Equations  \ref{2nicediag1}, \ref{2nicediag2}  and \ref{2nicefunny}, 
and such that the image  
under $\varphi $ of elements   of ${\mathcal B}  \cap J_2$ is    the 
obvious multiplicative  extension of the images of $x, y $ and $z$  of  
$\varphi $ is in $ \Aut^0(A)$.    So,  identifying   as in (i), 
$GL(J_1/J_2) $ with $GL_3(k)$ via the ordered basis 
$\{ x+J_1, y+J_1 \} $  of $ J_1/J_2 $,  
$f_1 (\Aut^0(A)) $ is    the  subgroup of diagonal matrices.  
\end{proof}

The next result gives the  dimension  of $Z(R_u(\Out^0(B)))$.

\begin{Proposition} \label{2unipotent}   Suppose that  $A$  satisfies 
Hypothesis \ref{Ahyp2nice} and that  the elements 
$\alpha, \beta, \gamma, \delta $    
of   Proposition \ref{2nicequnail} are all equal to $0$.
Then 
$\dim (Z(R_u(\Out^0(A)))) = 2$. 
\end{Proposition}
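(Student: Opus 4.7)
The plan is to mirror the approach of Proposition~\ref{3unipotent}: when $\alpha=\beta=\gamma=\delta=0$, Proposition~\ref{2nicequnail} gives $A\cong k\langle x,y\rangle/\langle xy+yx, x^3, y^3\rangle$. I would fix the ordered basis $\mathcal{B}=\{x, y, x^2, y^2, xy, xyx, yxy, xyxy\}$ of $J_1$ (the basis used in Proposition~\ref{out2nice}) and identify $\Aut^0(A)$ with a closed subgroup of $GL_8(k)$. By the proof of Proposition~\ref{out2nice}, every $\varphi\in\Aut^0(A)$ has lower-triangular matrix, and $\Aut^0(A)$ is cut out of $B_8(k)$ by Equations~\ref{2nicediag1}, \ref{2nicediag2} and \ref{2nicefunny}, together with the multiplicative extension conditions forcing the images of the degree $\geq 2$ basis elements to be determined by those of $x$ and $y$.

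Next, I would write $R_u(\Aut^0(A))=\Aut^0(A)\cap U_8(k)$ explicitly as a parametrized closed subgroup of the strictly unitriangular group (matching the format of the matrix display in Proposition~\ref{3unipotent}). I would then compute $\Inn(A)$ via Lemma~\ref{geninn}: since $J^3\subseteq Z$ and since $Z_1\cap J_1$ acts trivially, one has $\dim\Inn(A)=\dim A-\dim Z(A)=9-6=3$, and the formulae $\varphi_u(v)=v+[v,u]+[vu,u]$ applied to a three-parameter family of $u$'s running over a complement of $Z_1\cap J_1$ in $J_1$ yield an explicit matrix description of $\Inn(A)$ inside $R_u(\Aut^0(A))$.

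As in the proof of Proposition~\ref{3unipotent}, I would set $\varphi^\circ:=\varphi-\mathrm{Id}$ for $\varphi\in\Aut^0(A)$ and observe from the lower-triangular shape that $\varphi^\circ$ raises radical degree enough that $\varphi^\circ\tau^\circ\psi^\circ=0$ for any three elements of $\Aut^0(A)$. This gives
\[
\varphi\tau\varphi^{-1}\tau^{-1}=\mathrm{Id}+\varphi^\circ\tau^\circ-\tau^\circ\varphi^\circ,
\]
which reduces the condition $\varphi\tau\varphi^{-1}\tau^{-1}\in\Inn(A)$ for all $\tau\in\Aut^0(A)$ to a finite system of linear equations on the matrix entries of $\varphi$. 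These equations define the preimage $\hat\Zb$ of $Z(R_u(\Out^0(A)))$ in $R_u(\Aut^0(A))$ as a closed subgroup of $GL_8(k)$.

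Counting free parameters, I expect $\dim\hat\Zb=5$, whence the identity $\dim Z(R_u(\Out^0(A)))=\dim\hat\Zb-\dim\Inn(A)$, justified via \cite[Corollary 4.3, Proposition 7.4B]{humph:book} exactly as at the end of Proposition~\ref{3unipotent}, gives the claimed value $2$. The main obstacle is the bookkeeping for this final commutator step: making the generic $\varphi\in R_u(\Aut^0(A))$ fully explicit, evaluating $\varphi^\circ\tau^\circ(u)-\tau^\circ\varphi^\circ(u)$ on $u\in\{x,y\}$ (which from the triangular shape are the only generators on which this commutator is non-trivial), and verifying that exactly five parameters in the matrix of $\varphi$ survive the resulting linear constraints.
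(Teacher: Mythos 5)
Your proposal follows essentially the same route as the paper's own proof: fix the basis of $J_1$ from Proposition~\ref{out2nice}, identify $\Aut^0(A)$ with a lower-triangular subgroup of $GL_8(k)$ cut out by the equations already derived there, describe $R_u(\Aut^0(A))=\Aut^0(A)\cap U_8(k)$ and $\Inn(A)$ explicitly, use $\varphi^\circ\tau^\circ\psi^\circ=0$ to linearize the commutator condition, and count parameters to get $\dim\hat\Zb=5$, $\dim\Inn(A)=3$, hence $\dim Z(\Ub)=2$. The only cosmetic difference is that the paper's Proposition~\ref{2unipotent} switches to the basis $\{x,y,x^2,y^2,xy,x^2y,xy^2,x^2y^2\}$ (equal to yours up to sign, since $xy=-yx$), and you stop short of carrying out the final parameter count that yields the constraints $a=e=0$, $c=-b^2$, $g=-f^2$.
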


\begin {proof}  Let $\{x+J_2, y+J_2\} $ be  a basis of  
$J_1/J_2 $  as in  Proposition \ref{2nicequnail}.  
Let $\Ub= R_u(\Out^0(A))$  and let ${\mathcal B}$ 
be the ordered  basis $\{ x, y,  x^2, y^2, 
xy, x^2y, xy^2, x^2y^2  \} $  of $J$ and identify $\Aut^0(A)$ with a  
subgroup of $GL_8(k)$   through the basis ${\mathcal B}$.    As in the 
Proposition \ref{2unipotent},  let  
$B_8(k)$ be the subgroup of $GL_8(k)$ consisting of lower triangular  
matrices and
let $U_8(k) $ be the subgroup of $B_8(k)$  of strictly unitriangular matrices.
By  Equations, \ref{2nicediag1}, \ref{2nicediag2}  and \ref{2nicefunny},  
$\Aut^0(A)$    is   contained 
in  $B_8(k)$ and hence by Lemma \ref{alguse}(i),   
$R_u(\Aut^0(A))  = \Aut^0(A) \cap U_8(k)$. Further,    
 $R_u(\Aut^0(A))$   is  the following subgroup of $U_8(k)$: 
\[   \left\{ \left( \begin{array}{cccccccc}
1 & 0 & 0 &0 &0 &0 &0 &0 \\
0& 1 &  0 &0 &0 &0 &0 &0 \\
0& e &  1  &0 &0 &0 &0 &0 \\
a & 0 &  0  & 1 &0 &0 &0 &0 \\
b&f&0&0&1&0&0&0\\
i & g & 0 &-e & f &1 &0 &0\\
c& -bf-ae-i & -a  & 0 & b  & 0 & 1 & 0\\
d&h&     -c-b^2& -g-f^2&0 &0&0 &1 
\end{array} \right)   \ : \  a,b,c,d,ef,g, h, i \in k \right\}  \]

We now calculate $\Inn(A)$. Let $ u \in J_1$.   By Lemma \ref{geninn}, 
$\varphi_u (v) =  v + [v, u] + [vu, u] $ for all $v \in V$.
By  Lemma \ref{elemfact2}, $Z\cap J_1 $ is an ideal of $A$, 
hence we may assume that  $ u = \epsilon_1 x + \epsilon_2 y + 
\epsilon_3 xy $,  $\epsilon_i \in k$, $ 1\leq i \leq 3 $.
One easily calculates
 $$  \varphi_u (x) = x  +2 \epsilon_2xy+ (2\epsilon_3 -  2\epsilon_1\epsilon_2) 
x^2y   + 2\epsilon_2^2xy^2; $$
 $$  \varphi_u (y) = y -2\epsilon_1xy   +2\epsilon_1^2x^2y -
2(\epsilon_1\epsilon_2 + \epsilon_3) xy^2. $$

Thus $\Inn (A)$ consist of matrices above such that    $ a=e =d=h=0 $ and 
$ c=-b^2$, $ g =-f^2 $.

For each $\varphi \in \Aut^0(A)$, let $\varphi^0 :=  \varphi -Id $. Then from 
the above description of $\Aut^0(A)$,  we have that for all 
$\varphi \in \Aut^0(A)$,   $\{x^2y, xy^2,   x^2y^2 \} \subset \Ker(\varphi^0)$, 
$\varphi^0(xy) $,  $\varphi^0(x^2) $ and $\varphi^0(y^2) $ 
are  contained in  the $k$-span of  $\{x^y, xy^2, x^2y^2 \}$ and 
$\varphi^0(x) $ and  $\varphi^0(y) $ are contained  in the $k$-span of 
 $\{ x^2, y^2, xy,   x^2y, xy^2, x^2y^2
 \}$. Hence, 
$\varphi^{\circ} \tau^{\circ }\psi^{\circ} =0 $ for all  
$ \varphi, \tau, \psi  \in \Aut^0(A)$. It follows that  for all 
$\varphi \in \Aut^0(A)$, $\varphi^{-1} = Id+\varphi^{\circ} 
+ {\varphi^{\circ} }^2 $ and for all 
$ \varphi, \tau \in \Aut^0(A)$  
$$\varphi \tau \varphi^{-1}\tau^{-1}  = 
Id + \varphi^{\circ}\tau^{\circ}  - \tau^{\circ}\varphi^{\circ}. $$
Further,  since $\{z, xy-yx, xy+yx, xyx, yxy, xyxy \}$ is contained in the 
kernel of   $\varphi^{\circ}\tau^{\circ} $, 
$$ \varphi^{\circ} \tau^{\circ}(u)  -\tau^{\circ}\varphi^{\circ}(u) =0 $$
for all $ u \in {\mathbb B} $ different from $x, y$.  
Let $\varphi $ be the matrix displayed above and let $\tau $ be   the 
matrix with  $a $ replaced by $a' $ etc. Then,
$$ \varphi^{\circ}  \tau^{\circ}(x) - \tau^{\circ} \varphi^{\circ} (x) = 
(ae'-a'e+b'f-bf')x^2y + (a(g'+f'^2)-a'(g+f^2))x^2y^2, $$
$$ \varphi^{\circ}  \tau^{\circ}(y) - \tau^{\circ} \varphi^{\circ} (y) = 
(ea'-e'a +fb'-f'b)xy^2 + (e(c'+b'^2)     -e'(c+b^2))x^2y^2. $$
Thus,   for any $\varphi \in  \Aut^0(A)$,  
$\varphi \tau \varphi^{-1}\tau^{-1}  \in \Inn (A)$ for all $\tau \in \Aut^0(A)$ 
if and only if
$$ a=e =0 \text{\  and \  }  c=-b^2,   g=-f^2 . $$  
Arguing as for Proposition \ref{3unipotent}, 
the inverse image in $R_{u}(\Aut^0(A)) $ of 
$Z(\Ub)$    has dimension $5$,  $\Inn(A)$ has dimension $3$,  
and hence    $Z(\Ub)$   has dimension $2$, as claimed.\end{proof}

\begin{Proposition} \label{Out2bad} Suppose that  $A$  satisfies 
Hypothesis \ref{Ahyp2bad}.  
Then $ \Out^0(A)/ R_u(\Out^0(A))  $ is   contained in a  one-dimensional torus.
\end{Proposition}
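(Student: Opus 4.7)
The plan is to mimic the strategy used in Propositions \ref{out3} and \ref{out2nice}: compute $f_1(\Aut(A))$ explicitly by exploiting the defining relations of $A$, show that its identity component is contained in a one-dimensional torus of $GL(J_1/J_2) = GL_2(k)$, and then invoke Proposition \ref{Outgen}(iii). By Lemma \ref{2getboring}, I may choose a basis $\{x+J_2, y+J_2\}$ of $J_1/J_2$ satisfying $y^2 \in J_3$, $x^2y \in J_4$, $x^3y = 0$, and $x^3 \equiv yxy \pmod{J_4}$. By Lemma \ref{2bad} together with Lemma \ref{elemfact2}, $\{x^2, xy, yx\}$ is a basis of $J_2/J_3$, $\{xyx+J_4, yxy+J_4\}$ is a basis of $J_3/J_4$, and $\{xyxy\}$ is a basis of $J_4$ with $yxyx = xyxy$ (using that $xyx, yxy \in J_3 \subseteq Z$). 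As a consequence, the monomials $x^2y, yx^2, xy^2, y^2x$ and $y^3$ all lie in $J_4$ (the first by Lemma \ref{2getboring}, the second since $yx^2 \in J_3 \subseteq Z$ equals $x^2y$, and the rest since $y^2 \in J_3$).

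For $\varphi \in \Aut(A)$, write $\varphi(x) \equiv \lambda_{x,x}x + \lambda_{y,x}y \pmod{J_2}$ and $\varphi(y) \equiv \lambda_{x,y}x + \lambda_{y,y}y \pmod{J_2}$, and identify $f_1(\Aut(A)) \subseteq GL_2(k)$ through the ordered basis $\{x+J_2, y+J_2\}$. The first constraint comes from $\varphi(y)^2 = \varphi(y^2) \in J_3$: since $y^2 \in J_3$, modulo $J_3$ I compute
$$ 0 \equiv \varphi(y)^2 \equiv \lambda_{x,y}^2\, x^2 + \lambda_{x,y}\lambda_{y,y}(xy+yx) \pmod{J_3}. $$
Linear independence of $\{x^2, xy, yx\}$ in $J_2/J_3$ forces $\lambda_{x,y} = 0$, so $f_1(\Aut(A))$ sits in the lower triangular subgroup of $GL_2(k)$.

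The second constraint comes from applying $\varphi$ to $x^3 - yxy \in J_4$. Since any contribution involving the $J_2$-parts of $\varphi(x)$ or $\varphi(y)$ lands in $J_4$ automatically, and using the congruences $x^2y \equiv yx^2 \equiv xy^2 \equiv y^2x \equiv y^3 \equiv 0$ and $x^3 \equiv yxy$ modulo $J_4$, I expect to obtain
$$ \varphi(x)^3 \equiv \lambda_{x,x}^3\, yxy + \lambda_{x,x}^2\lambda_{y,x}\, xyx + \lambda_{x,x}\lambda_{y,x}^2\, yxy \pmod{J_4}, $$
$$ \varphi(y)\varphi(x)\varphi(y) \equiv \lambda_{y,y}^2\lambda_{x,x}\, yxy \pmod{J_4}. $$
Comparing coefficients in the basis $\{xyx+J_4, yxy+J_4\}$ of $J_3/J_4$, and using that $\lambda_{x,x} \neq 0$ (since $f_1(\varphi)$ is invertible and $\lambda_{x,y}=0$), yields $\lambda_{y,x} = 0$ and $\lambda_{x,x}^2 = \lambda_{y,y}^2$.

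Therefore $f_1(\Aut(A))$ lies in the subgroup of diagonal matrices with $a_{11}^2 = a_{22}^2$, whose identity component is the one-dimensional torus $\{a_{11} = a_{22}\}$. As a torus has trivial unipotent radical, $f_1(\Aut(A))^0/R_u(f_1(\Aut(A))^0)$ is contained in this one-dimensional torus, and Proposition \ref{Outgen}(iii) then transfers this containment to $\Out^0(A)/R_u(\Out^0(A))$. The delicate point is simply the careful bookkeeping of which degree-three monomials in $x,y$ vanish modulo $J_4$; once the list above is in hand, the rest is linear algebra completely parallel to the proofs of Propositions \ref{out3} and \ref{out2nice}.
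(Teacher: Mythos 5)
Your proof is correct and takes essentially the same route as the paper: the same normalized basis from Lemma~\ref{2getboring}, the same two constraints extracted from $\varphi(y)^2\in J_3$ and from applying $\varphi$ to $x^3-yxy\in J_4$, and the same reduction via Proposition~\ref{Outgen}(iii). The only cosmetic difference is that you substitute $x^3\equiv yxy$ and drop $x^2y$ modulo $J_4$ at the outset, whereas the paper carries the $x^2y$ term one line further before discarding it; the resulting equations $\lambda_{y,x}=0$ and $\lambda_{x,x}^2=\lambda_{y,y}^2$ are identical.
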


\begin{proof}   Let $\{x+J_2, y+J_2\}$ be  a basis of  $J_1/J_2$ satisfying 
the  properties of   Lemmas \ref{2bad} and \ref{2getboring}. Let 
${\mathcal B}$ be the ordered basis 
$\{x,y,  x^2, xy, yx, xyx, yxy, xyxy \} $ of $J_1$  and let  
$\varphi \in \Aut(A)$ be as  before.
 The fact that $\varphi(y)^2 \equiv 0 \mod  \   J_3   $   implies that  
$\lambda_{x,y}= 0 $ and we will assume that this is the case.
Now consider the equation
$$\varphi(x) ^3  \equiv \varphi(y)\varphi(x)\varphi(y) \  \mod \   J_4  $$
$$ \lambda_{x,x}^3 x^3 + \lambda_{x,x}^2\lambda_{y,x} xyx + 2\lambda_{x,x}^2
\lambda_{y,x} x^2y + \lambda_{x,x}\lambda_{y,x}^2 yxy \equiv  \lambda_{y,y}^2 y
(\lambda_{x,x} x + \lambda_{y,x} y ) y  \  \mod  \   J_4, $$
$$ \lambda_{x,x}^2\lambda_{y,x} xyx  +    (\lambda_{x,x}^3 +\lambda_{x,x}
\lambda_{y,x}^2)yxy \equiv \lambda_{y,y}^2\lambda_{x,x}yxy \   \mod  \   J_4,$$
$$\lambda_{x,x}^2\lambda_{y,x}= 0, \  \lambda_{x,x}^3 +\lambda_{x,x}
\lambda_{y,x}^2
=\lambda_{y,y}^2\lambda_{x,x}, $$
$$\lambda_{y,x}= 0, \   \lambda_{x,x}^3=\lambda_{y,y}^2\lambda_{x,x}, $$
$$ \lambda_{y,x}= 0, \lambda_{y,y}= \pm \lambda_{x,x}. $$  The result is 
follows from these calculations and Proposition \ref{Outgen}. 
\end{proof}

\section{Proof of Theorem \ref{stabletomor} and Theorem  
\ref{abeliandefectcon}}
 
\begin{prfstab} There is  a stable equivalence of Morita type between  
$A$ and $B$, by \cite[Th\'eor\`eme  4.15]{rouq:stab}, 
$ \Out^0(A)$ and $\Out^0(B)$ are isomorphic as algebraic groups.  In particular,
$  \Out^0(A)/R_u(\Out^0(A))  \cong \Out^0(B)/ R_u(\Out^0(B))  $    and 
$R_u (\Out^0(A))  \cong  R_u(\Out^0(B)$.   
By  Proposition \ref{out2nice}, and the remark preceding it, 
$\Out^0(B)/R_u(\Out^0(B))$ is a 
$2$-dimensional torus, and by  Proposition \ref{2unipotent}, 
$\dim (Z(R_u(\Out^0(A)))) = 2 $. Hence, it follows  from 
Propositions \ref{out3}, \ref{3unipotent}, \ref{out2nice}, 
and  \ref{Out2bad} that if  $A$ satisfies one of the hypotheses \ref{Ahyp3}, 
\ref{Ahyp2nice} or \ref{Ahyp2bad}, then $A \cong B$. The result follows as 
by Proposition \ref{firstreduction} and Lemma \ref{2start}, 
$A$   does satisfy  one of the hypothesis \ref{Ahyp3}, 
\ref{Ahyp2nice} or \ref{Ahyp2bad}. 
\end{prfstab}

\begin{prfabe} By the structure theory of blocks with 
normal defect groups \cite{Ku},  $C$ is Morita equivalent  to   a
twisted group algebra    $k_{\alpha} P \rtimes E $, where 
$E$ is a $p'$-subgroup of  the automorphism group of $P$ and    
$\alpha $ is an element of   $H^2(E, k^{\times}) $.  Since $A$ is not 
nilpotent and  $C$ has up to isomorphism   only one simple module,   it
is well known (see, for instance,  \cite[Theorem 1.1]{HoKe}), 
that $ E  $ is a Klein-$4$-group   and that 
$C$ is Morita equivalent  
to the algebra $B =k\langle X, Y \rangle/\langle X^3, Y^3, XY +YX\rangle$.

By Rouquier's work \cite[6.3]{rouq:stab}  
(see also \cite[Theorem A.2]{Li}) there is a stable equivalence of Morita 
type between $A$ and $C$.
By \cite{PuUs1} (see also \cite{Keslin09}), 
the blocks  $A$ and $C$ are    perfectly isometric. Hence by \cite{Brou2}, 
the centers of $A$ and $ C$ are  isomorphic  as $k$-algebras and    up to 
isomorphism $A$    has one simple module. Consequently,   the dimension of a 
basic algebra of $A$  is equal to  $9$.
The result follows from Theorem \ref{stabletomor} applied to $B$  
and a basic algebra of  $A$.
\end{prfabe}

{\bf Acknowledgments.}   I would like to thank Meinolf Geck, 
Markus Linckelmann  and Nicole Snashall  for    helpful discussions 
on  various aspects  of this paper.

\bigskip\bigskip

\end{document}